\newcommand{\R}{\mathbb{R}}
\newcommand{\N}{\mathbb{N}}
\newcommand{\1}{\mathbbm{1}}
\newtheorem{theorem}{Theorem}[section]
\newtheorem{lemma}[theorem]{Lemma}
\newtheorem{proposition}[theorem]{Proposition}
\newtheorem{remark}[theorem]{Remark}
\numberwithin{equation}{section}
\title{Efficient Simulation of Hawkes Processes using their Affine Volterra Structure
}
\author[1]{Eduardo Abi Jaber\thanks{EAJ is grateful for the financial support from the Chaires FiME-FDD and Financial Risks at Ecole Polytechnique.}}
\author[1]{Elie Attal} 
\author[1, 2]{Dimitri Sotnikov\thanks{DS is grateful for the financial support provided by Engie Global Markets.}}
\affil[1]{Ecole Polytechnique, CMAP}
\affil[2]{Engie Global Markets}
\begin{document}

\maketitle

\begin{abstract}
    We introduce a novel and efficient simulation scheme for Hawkes processes on a fixed time grid, leveraging their affine Volterra structure. The key idea is to first simulate the integrated intensity and the counting process using Inverse Gaussian and Poisson distributions, from which the jump times can then be easily recovered. Unlike conventional exact algorithms based on sampling jump times first, which have random computational complexity and can be prohibitive in the presence of high activity or singular kernels, our scheme has deterministic complexity which enables efficient large-scale Monte Carlo simulations and facilitates vectorization. Our method applies to any nonnegative, locally integrable kernel, including singular and non-monotone ones. By reformulating the scheme as a stochastic Volterra equation with a measure-valued kernel, we establish weak convergence to the target Hawkes process in the Skorokhod $J_1$-topology. Numerical experiments confirm substantial computational gains while preserving high accuracy across a wide range of kernels, with remarkably improved performance for a variant of our scheme based on the resolvent of the kernel.
\end{abstract}

\section{Introduction}
Hawkes processes, originally introduced by \citet{hawkes1971spectra}, form a class of \textit{self-exciting} counting processes where each event increases the likelihood of future events. This self-excitation is governed by a convolution kernel $K \in L^1([0\,,T]\,, \R_+)\,$. More precisely, the  counting process $N = \left(N_t\right)_{t \leq T}$ is characterized by its stochastic instantaneous intensity
\begin{equation}
\label{eq:intensity}
\lambda_t = g_0(t) + \int_0^t\,K(t-s)\,dN_s\,,\quad t \leq T\,,
\end{equation}
where $g_0 \in L^1([0\,,T]\,, \mathbb R_+)$ is the \textit{exogenous intensity}, driving events that occur independently of the past trajectory. The second term, $K * dN\,$, is the \textit{endogenous} component, which increases the intensity at each jump of $N$ and retains memory of these jumps through the kernel $K\,$. We refer to \citet*{laub2025hawkes} for a general review on Hawkes processes.

When $K(t) = c\,e^{-b\,t}\,$, with $c > 0\,, b \in \R\,$, the process $(N, \lambda)$ is Markovian. However, for general kernels $K\,$, the Volterra-type memory breaks the Markov property. This ability to encode self-excitation with memory makes Hawkes processes popular in diverse domains, including financial mathematics \citep*{bacry2015hawkes, hawkes2018hawkes, embrechts2011multivariate, rosenbaum2021microscopic}, insurance \citep*{lesage2022hawkes, bessy2021multivariate, baldwin2017contagion, hillairet2023expansion}, seismology \citep*{ogata1988statistical, davis2024fractional} and modeling of biological neurons \citep{hodara2017hawkes, goncalves2022perfect}.

In the non-Markovian setting, computing key quantities, such as the distribution of $N_t$, its moments, or its quantiles, is generally intractable in closed form. Consequently, simulation  for Monte Carlo methods plays a central role in applications.

One of the most widely used simulation methods is \textit{Ogata's thinning algorithm} from \citet{Ogata81}, based on the acceptance-rejection technique, which generates candidate jumps and discards some of them at random. It requires explicit upper bounds on the intensity $\lambda_t$, which are typically available when $K$ is bounded. However, the scheme is no longer exact when $K$ is unbounded (e.g., $K(0) = +\infty$), and the case of a non-monotone kernel requires special treatment.

Another possibility for simulation is to use the \textit{population} representation (also called immigrant-birth or cluster representation, see \citet{hawkes1974cluster, Moller2005}), in which the Hawkes process is seen as a branching structure: each jump gives birth to an inhomogeneous Poisson process, which can be simulated directly. This approach only requires $K$ to be integrable.

Both simulation methods share the same important drawback: their computation cost is random, since jump times are simulated first, before building the processes $\lambda$ and $N\,$. Therefore, their complexity has order of magnitude $\mathcal O(N_T^2)\,$, where the square comes from the non-Markovian Volterra structure. In regimes with strong self-excitation, $N_T$ can be very large, making even a single simulation costly. Furthermore, this makes vectorized sampling more difficult in the context of Monte Carlo methods. In addition, in many applications, the quantities of interest depend on the finite-dimensional distributions of 
$N$ (see, for instance, \citet*{lesage2022hawkes, bessy2021multivariate}), rather than on the event times themselves. In such cases, simulating the individual jump times, with their inherently random computational complexity, becomes undesirable and may substantially degrade performance. Even if one restricts to the Markovian case, where 
$K$ is an exponential kernel, an exact simulation method exists that achieves linear, but still stochastic,  complexity of order 
 $\mathcal O (N_T)$ (see \citet{Dassios13}). 
 
 This naturally leads to a central question:

\begin{center}
\textit{Can Hawkes processes be simulated efficiently with a deterministic complexity?}
\end{center}

To address this question, we make two key observations: 
\begin{enumerate}
    \item \textbf{Only integrated quantities matter:} Once the integrated intensity
$\Lambda_{t_i,t_{i+1}} := \int_{t_i}^{t_{i+1}} \lambda_s ds$
has been simulated over a deterministic interval \([t_i, t_{i+1})\), the increment of the Hawkes process 
$N_{t_{i+1}} - N_{t_i}$, conditional on $\Lambda_{t_i,t_{i+1}}$, can be sampled using a  Poisson distribution with parameter $\Lambda_{t_i,t_{i+1}}$.  
\item  \textbf{Hawkes processes are Affine Volterra processes with jumps:}  as shown in \citet[Example~2.3]{abi2021weak}, \citet*{pulido2024affine, cuchiero2019markovian}. More precisely, based on the first observation, we focus on the dynamics of the (less conventional) integrated intensity \(\Lambda\), rather than the instantaneous intensity \(\lambda\). Integrating \eqref{eq:intensity} and applying Fubini's theorem yields
\begin{equation}
\label{eq:integrated_intensity}
\Lambda_t := \int_0^t \lambda_s \, ds 
= \int_0^t g_0(s)\, ds + \int_0^t K(t-s) (\Lambda_s + Z_s) \, ds, \quad t \le T,
\end{equation}
where $ Z := N - \Lambda$ is a martingale with predictable quadratic variation equal to $\Lambda$, which, in other words, is \textit{affine} in $\Lambda$.  This affine Volterra structure translates into the knowledge of the characteristic function of the integrated intensity $\Lambda$ in semi-explicit form in terms of a non-linear deterministic Volterra equation. 
\end{enumerate}

Recently, in the context of continuous affine Volterra processes (see, e.g., \citet*{abi2019affine}), the affine structure of integrated quantities has been leveraged to develop an efficient simulation method, namely the iVi scheme of \citet{jaber2024simulation, jaber2025simulating}, which relies on Inverse Gaussian increments.

Building on these ideas, we introduce a new paradigm for approximating $(\Lambda, N)$ directly on a fixed time grid $\left(t^n_i\right)_{0 \leq i \leq n} := \left(i\frac{T}{n}\right)_{0 \leq i \leq n}$ with $n+1$ points. Our method replaces the random $\mathcal O(N_T^2)$ complexity of classical schemes with a deterministic $\mathcal O(n^2)\,$, giving explicit control over the trade-off between computational cost and accuracy in Monte Carlo experiments. The scheme simulates the increments $\widehat \Lambda_{i,i+1}^n \approx \Lambda_{t^n_{i+1}} - \Lambda_{t^n_i}$ through Inverse Gaussian sampling, where the parameters are derived from an approximation on the characteristic function of $\Lambda\,$, followed by Poisson sampling for $\widehat N^n_{i,i+1} \approx N_{t^n_{i+1}} - N_{t^n_i}\,$. The approximation of the integrated intensity is then updated to reflect the dynamic \eqref{eq:integrated_intensity}. As an optional step,  the jump times 
$\mathcal{T}^n = (\tau^n_k)_{k=1,\ldots,N_T^n}$ 
of the counting process ${N}$, where $N^n_T= \sum_{i=0}^{n-1} \widehat N_{i,i+1}^n$ denotes the total number of jumps on $[0,T]$,  can be {approximated} from $\widehat{N}^n$. 
On each interval $[t_i, t_{i+1})$, conditional on the number of jumps $\widehat{N}^n_{i,i+1}$ in that interval, we sample  $\widehat{N}^n_{i,i+1}$ jump times uniformly within the interval and then sort them in increasing order. The scheme is detailed in Algorithm~\ref{algo:ivi_hawkes}. We refer to our method as {Hawkes iVi}, for {integrated Volterra implicit}  following the terminology of \citet{jaber2024simulation, jaber2025simulating}.

% {\ma EAJ:[Suggestion to replace the sentence above by:] As an optional step, the jump times 
% $\mathcal{T}^n = (\tau^n_k)_{k=1,\ldots,N_T^n}$ 
% of the counting process ${N}$ can be constructed from $\widehat{N}^n$. 
% On each interval $[t_i, t_{i+1})$, conditional on the number of jumps $\widehat{N}^n_{i,i+1}$ in that interval, we sample the  $\widehat{N}^n_{i,i+1}$ jump times uniformly within the interval and then sort them in increasing order.}

\begin{algorithm}[H]\label{algo:ivi_hawkes}
\caption{ \textbf{- The Hawkes iVi scheme:} Simulation of \( \widehat \Lambda\,, \widehat N \)}\label{alg:simulation}
\begin{algorithmic}[1]
\State \textbf{Input:} $g_0\,,K \in L^1([0\,,T]\,, \R_+)$, uniform partition of $[0,T]$, \( t_i^n = i T/n  \), with $i=0,\ldots, n$.
\State \textbf{Output:} \(  \widehat \Lambda^n_{i, i+1}\,, \widehat N^n_{i, i+1} \) for \( i = 0, \ldots, n-1 \) and (optional) $\mathcal{T}^n = (\tau_k^n)_{k=1, \ldots, N_T^n}$.
\State Compute
\begin{align}\label{eq:kij}
k_{j}^n = \int_{t^n_j}^{t^n_{j+1}}\, K\left( s\right) \, ds\,, \quad {j=0\,,\ldots\,, n-1}.
\end{align}
\For{$i = 0$ to $n-1$}

\State Compute the quantity:
\begin{align}\label{eq:alphai}
    \alpha^n_i  = \int_{t_i^n}^{t^n_{i+1}} g_0(s) ds + \sum_{j=0}^{i-1}   k^n_{i-j}\,\widehat N^n_{j,j+1}\,,  
\end{align}
with  $\alpha^n_0=\int_0^{t_1^n} g_0(s) ds,$  for $i=0$\,.
    \State Simulate the increment:
\begin{align}
    \label{eq:hatNsample}
    \widehat N^n_{i,i+1} \sim \mathcal P\left(\xi^n_i\right)\,, \quad \text{where} \quad \xi^n_i \sim IG\left(\frac{\alpha^n_i}{1 - k_0^n}\,, \left(\frac{\alpha^n_i}{k_0^n} \right)^2 \right)\,.
\end{align}
    \State Compute the increment:
\begin{equation}
    \label{eq:hatLambdasample}
    \widehat \Lambda^n_{i,i+1} = \alpha^n_i + k_0^n\, \widehat N^n_{i,i+1}\,.
\end{equation}
\State {Optional step (jump times):} Simulate $\widehat N^n_{i,i+1}$ i.i.d. uniform random variables on $[t_i, t_{i+1})$, sort, and append them to $\mathcal{T}^n$.

\EndFor
\end{algorithmic}
\end{algorithm}
$IG(\cdot,\cdot)$ and $\mathcal P(\cdot)$ denote respectively the Inverse Gaussian (see Appendix \ref{App:IG}) and the Poisson distributions. \\

The Inverse Gaussian sampling in \eqref{eq:hatNsample} is justified by an approximation of the characteristic functional of $\Lambda_{t^n_{i+1}} - \Lambda_{t^n_i}\,$. Conditional on the latter, $N_{t^n_{i+1}} - N_{t^n_i}$ follows a Poisson distribution with parameter $\Lambda_{t^n_{i+1}} - \Lambda_{t^n_i}\,$. Since the scheme outputs approximations of the increments of $\Lambda$ and $N$ on the time grid, the piecewise constant càdlàg processes we are interested in are given by 
\begin{equation}
    \label{eq:def_process_Lambda}
\Lambda^n_t := \sum_{i = 0}^{\lfloor n t/T\rfloor - 1}\,\widehat \Lambda^n_{i,i+1}\,, \quad t \leq T\,, \quad n \geq 1 \,,
\end{equation}
\begin{equation}
    \label{eq:def_process_N}
    N^n_t := \sum_{i = 0}^{\lfloor nt/T\rfloor - 1}\,\widehat N^n_{i,i+1}\,, \quad t \leq T\,, \quad n \geq 1\,.
\end{equation}
Our main theoretical contribution, stated in Theorem~\ref{theorem:convergence}, establishes the weak convergence of the pair of processes $(\Lambda^n, N^n)$, defined in \eqref{eq:def_process_Lambda}--\eqref{eq:def_process_N}, to the limiting pair $(\Lambda, N)$ in the Skorokhod $J_1$-topology. The general strategy follows the classical approach for weak convergence: proving tightness and identifying the limit. However, each of these steps is far from straightforward in our setting and requires a novel analysis.  The key idea lies in recasting the discrete scheme as a stochastic Volterra equation with a measure-valued kernel (Lemma~\ref{lemma:Volterra}). This reformulation is crucial, as it reveals that the coupling between $N^n$ and $\Lambda^n$ naturally inherits a Volterra-type structure, and it enables refined moment  estimates, leading to tightness of the approximating sequence and a clear identification of the limiting dynamics through stability results for stochastic Volterra equations (Lemma~\ref{lemma:stability_equation}). Furthermore, a delicate point in our proof  is to show that the limit process $N$ retains the point-process structure with unit jumps, a nontrivial property that is not automatic under weak convergence in $J_1$.

Beyond eliminating random complexity, our approach offers several other benefits:
\begin{itemize}
    \item[$\bullet$] \textbf{Vectorization-friendly:} choosing $n$ before simulation enables efficient, parallel Monte Carlo sampling.
    \item[$\bullet$] \textbf{Robust kernel framework:} the method uses integrated kernel quantities only, accommodating any $K \in L^1([0\,,T], \R_+)\,$, including non-monotone and singular kernels.
    \item[$\bullet$] \textbf{Weak convergence guarantee:} we prove that the couple of processes $\left(\Lambda^n\,,N^n\right)$ defined in \eqref{eq:def_process_Lambda} and \eqref{eq:def_process_N} converges weakly to $(\Lambda\,, N)$ in the Skorokhod topology, by reformulating the scheme as a stochastic Volterra equation with a measure-valued kernel (see Section \ref{sect:convergence}).
    \item[$\bullet$] \textbf{Possible simulation of jump times:} although Algorithm \ref{algo:ivi_hawkes} simulates $(\Lambda\,,N)$ directly, which is more convenient for Monte Carlo estimation, jump times can still be approximated satisfactorily (see Subsection~\ref{subsec:numerical_experiments}).
      \item[$\bullet$] \textbf{Linear complexity in the Markovian case:} for exponential kernels $K(t) = c\,e^{-b t}$ with $c > 0\,,b \in \R\,$, a linear $\mathcal O(n)$ variant exists. To see this, we remark that, in this case, \eqref{eq:alphai} can be rewritten as
    \begin{equation}
        \label{eq:Lambda_markov_case}
        \alpha^n_i = \int_{t^n_i}^{t^n_{i+1}}\,g_0(s)\,ds +  e^{-b\,T/n}\,\left(\alpha^n_{i-1} - \int_{t^n_{i-1}}^{t^n_i}\,g_0(s)\,ds \right) + k^n_1 \,\widehat N^n_{i-1,i} \,,\quad 1 \leq i \leq n-1\,,
    \end{equation}
    which can be computed in $\mathcal O(1)$ time. This particular case is developed in Subsection~\ref{subsec:markov_scheme}. 
    \item[$\bullet$] \textbf{Reformulation using resolvent kernels:} replacing $K$ by its resolvent of the second kind drastically improves performance empirically, although positivity of the scheme is no longer guaranteed. This alternative scheme is tested in Subsection~\ref{subsec:resolvent_scheme}.
\end{itemize}

All in all, our numerical experiments in  Section \ref{sect:numerics} confirm substantial computational gains while preserving high accuracy across a wide range of kernels, with remarkably improved performance for the variant of our scheme based on the resolvent of the kernel.

\paragraph{Connection to the literature.} 
This work is inspired by the iVi scheme introduced by \citet{jaber2024simulation, jaber2025simulating} for continuous affine Volterra processes, and further enhanced by \citet{zaugg2025liftedhestonmodelefficient} for the lifted Heston model. Extending the approach to point processes requires several important adjustments. First, although both schemes rely on an Inverse Gaussian approximation, the sampling is performed differently in our jump setting to preserve the Volterra structure of the process $\Lambda$ while ensuring that $N$ remains integer-valued, a crucial distinction compared to the continuous case. Second, the proof of convergence necessitates new estimates, particularly to establish that the limiting process is a point process with unit jumps. From a numerical perspective, the iVi Hawkes scheme differs substantially from the simulation methods commonly used in the literature due to its deterministic complexity, and therefore requires a dedicated comparison to assess its computational advantages. Overall, while both iVi schemes share the same fundamental structure, each introduces improvements in its respective domain, demonstrating the effectiveness of the method for approximating affine Volterra processes in different contexts.

In the literature, weak convergence results towards Hawkes-type limits are rather scarce. To the best of our knowledge, \cite{kirchner2016hawkes,kirchner2017estimation} is one of the very few works studying such a convergence result using discrete-time  INAR($p$) and INAR($\infty$) processes. One key difference with our approach is that we also establish the convergence of the intensity process $\Lambda^n$, whereas \cite{kirchner2016hawkes} focuses only on the point process itself. A numerical comparison of Algorithm \ref{algo:ivi_hawkes} with a refined version of the approximation method of \citet{kirchner2016hawkes, kirchner2017estimation} is provided in Subsection \ref{subsec:numerical_experiments}.  More general weak convergence and stability results for stochastic Volterra equations with jumps, such as those in \cite*{abi2021weak, abietal2021weak}, cannot be directly applied in our context. These works rely on kernels $t \mapsto K(t)$ belonging to some $L^p$ space, while our setting is inherently discrete. After reformulation, our scheme can indeed be seen as a stochastic Volterra equation, but with a measure-valued kernel $K(ds)$, which requires a distinct treatment.

The most renowned simulation schemes for the Hawkes process, to which we compare our method in Section \ref{sect:numerics}, are Ogata's algorithm \citep{Ogata81}, the  population approach \citep{Moller2005} and the exact exponential scheme for the Markovian case \citep{Dassios13}. Other methodologies can be found in the literature, often adapted to a specific class of kernels. Among these, \citet*{Chen2021} propose to adapt Ogata's algorithm by adding a small time-shift to handle singular Mittag--Leffler kernels, while \citet*{Duarte} introduce a thinning algorithm for sums of Erlang kernels. %Additionally, INAR($p$) and INAR($\infty$) processes provide with new possibilities for approximating Hawkes processes, as they can be viewed as their discrete-time analogues, we refer to \citet{kirchner2016hawkes, kirchner2017estimation}. 

\paragraph{Outline of the paper.} In Section \ref{sect:scheme_motivation}, we explain how the scheme can be derived from a joint approximation of the dynamics and of the characteristic functional of the integrated intensity, while Section \ref{sect:convergence} focuses on its weak convergence. Section \ref{sect:numerics} contains all numerical tests, and Section \ref{sect:convergence_proof} is dedicated to proving the weak convergence result of Theorem \ref{theorem:convergence}. Finally, Appendix \ref{app:kernels} defines the resolvent of kernel and gives their explicit form for some usual kernels, and Appendix \ref{App:IG} contains properties of the Inverse Gaussian distribution.

{\paragraph{Notation.} Throughout the paper, we use the symbol $*$ to denote convolution: 
$$
 (R*K)(t) := \int_0^tR(t-s)K(s)\,ds.
$$
}

\section{Deriving the Hawkes iVi scheme}\label{sect:scheme_motivation}
The scheme is based on a joint approximation on the dynamics \eqref{eq:integrated_intensity} and on the characteristic functional of the integrated intensity $\Lambda_t := \int_0^t\,\lambda_s\,ds\,$. We insist on the fact that the present paper deals with Hawkes process on $[0\,,T]\,$, as opposed to the whole nonnegative real line. Thus, we only require that $g_0\,,K \in L^1([0\,,T]\,,\R_+)\,$. The processes $\left(\Lambda_t\,,N_t\right)_{t \leq T}$ can be interpreted as the restriction to $[0\,,T]$ of the Hawkes process on $\R_+\,$, with compactly supported exogenous intensity and memory kernel respectively given by 
$$
\tilde g_0(t) := g_0(t)\,\1_{t \leq T}\,, \quad \text{and}\quad \tilde K(t) := K(t)\,\1_{t \leq T}\,, \quad t \geq 0\,.
$$
Note, however, that Algorithm \ref{algo:ivi_hawkes} is well-defined as long as $k_0^n := \int_0^{T/n}\,K(s)\,ds < 1\,$, due to \eqref{eq:hatNsample}, which is always true for $n$ large enough.

\subsection{Approximating the dynamics}
\label{subsec:dynamic_approx}
We recall that integrating \eqref{eq:intensity} and applying Fubini's interchange leads to the dynamics of the integrated intensity
$$
\Lambda_t = \int_0^t\,g_0(s)\,ds + \int_0^t\,K(t-s)\,N_s\,ds\,,\quad t \leq T\,.
$$
Thus, similarly to \citet[Proposition~1.1]{jaber2025simulating}, we can decompose the increments $\Lambda_t - \Lambda_s$ into an $\mathcal F_s$-measurable part, corresponding to the non-Markoviannity of the dynamics, and an increment between $s$ and $t\,$, as follows
$$
\Lambda_t - \Lambda_s = G_s(t) + \int_s^t\,K(t-r)\,(N_t - N_r)\,dr\,, \quad s \leq t \leq T\,, 
$$
where $G_s(t)$ is $\mathcal F_s$-measurable and given by 
\begin{equation}
\label{eq:G_s_t}
G_s(t) := \int_s^t\,g_0(r)\,dr + \int_0^s\,\int_s^t\,K(u-r)\,du\,dN_r\,,\quad s \leq t \leq T\,.
\end{equation}
We now want to approximate $\Lambda_{t^n_{i+1}} - \Lambda_{t^n_i}\,$. To do so, we derive a discretized proxy of $G_{t^n_i}(t^n_{i+1})$ using left-endpoint approximations of the integrals:
\begin{align*}
    G_{t^n_i}(t^n_{i+1}) &= \int_{t^n_i}^{t^n_{i+1}}\,g_0(s)\,ds + \sum_{j= 0}^{i-1}\,\int_{t^n_j}^{t^n_{j+1}}\,\int_{t^n_i}^{t^n_{i+1}}\,K(u-r)\,du\,dN_r \\
    &\approx \int_{t^n_i}^{t^n_{i+1}}\,g_0(s)\,ds + \sum_{j= 0}^{i-1}\,\int_{t^n_j}^{t^n_{j+1}}\,\int_{t^n_i}^{t^n_{i+1}}\,K(u-t^n_{j})\,du\,dN_r \\
    &\approx \int_{t^n_i}^{t^n_{i+1}}\,g_0(s)\,ds + \sum_{j= 0}^{i-1}\,k^n_{i-j}\,\widehat N_{{j, j+1}}^n  =: \alpha_i^n\,, \quad i \leq n-1\,, \quad n \geq 1\,,
\end{align*}
where the $\left(k^n_i\right)_{i \leq n-1}$ are defined in \eqref{eq:kij}. Hence, we have done the approximation $G_{t^n_i}(t^n_{i+1}) \approx \alpha^n_i\,$, which explains the definition of $\left(\alpha^n_i\right)_{i \leq n-1}$ in \eqref{eq:alphai}.
Finally, to obtain \eqref{eq:hatLambdasample}, we also make a left-endpoint approximation in the second term of our decomposition of the increments of $\Lambda\,$, leading to
\begin{equation}\label{eq:Lam_N_relationship}
    \Lambda_{t^n_{i+1}} - \Lambda_{t^n_i} = G_{t^n_i}(t^n_{i+1}) + \int_{t^n_i}^{t^n_{i+1}}\,K(t^n_{i+1} - r)\,\left(N_{t^n_{i+1}} - N_r\right)\,dr \approx \alpha^n_i + k_0^n \,\widehat N^n_{i,i+1} = \widehat \Lambda^n_{i,i+1}\,.
\end{equation}
We will see in Section \ref{sect:convergence} that this discretization amounts to replacing the memory kernel $K$ by a measure-valued one, built on integrated quantities, hence preserving the Volterra structure of the process.

\subsection{Deriving the Inverse Gaussian parameters}\label{subsec:deriving_ig}
The Inverse Gaussian distribution comes from a short-time approximation of the characteristic functional of $\Lambda\,$. We remark that the exponent in the characteristic function of Inverse Gaussian random variables (see \ref{eq:IG_charac}) has the structure of a root of a second-order polynomial. It is a well-known fact that characteristic functionals of affine Volterra processes are given semi-explicitly, up to the solution to a Riccati--Volterra equation (see \citet[Theorem~2.2]{abi2021weak} and \citet*{abi2019affine}). Consequently, using second-order Taylor expansions in an implicit Euler fashion, we can expect to obtain second-order polynomials as small-time approximations of such equations. We detail this methodology in the present subsection to identify the right Inverse Gaussian parameters for our scheme.

From \citet[Example~2.3]{abi2021weak}\footnote{The result is proved in a more general framework which requires assumptions of the memory kernel $K\,$. However, in the special case of Hawkes processes, it can be shown to hold for any $K \in L^1([0\,,T]\,, \R_+)$ using similar arguments.}, we know that for $w \in \mathbb C$ with $\Re(w) \leq 0\,$, 
$$
\mathbb E \left[\exp\left(w\, \left(\Lambda_{t^n_{i+1}} - \Lambda_{t^n_i}\right)\right)\,\middle|\, \mathcal F_{t^n_i} \right] = \exp\left(\int_{t^n_i}^{t^n_{i+1}}\,F(\Psi(t^n_{i+1} - s))\,dG_{t^n_i}(s)\right)\,, \quad i \leq n-1\,,\quad n \geq 1\,,
$$
where $G_{t^n_i}$ is defined in \eqref{eq:G_s_t}. Here, the function $F$ is given by
\begin{equation}\label{eq:F_def}
    F(u) = w + e^{u} - 1\,, \quad u \in \mathbb C\,,
\end{equation}
and $\Psi$ is the continuous solution, with nonpositive real part, to the Volterra equation 
$$
\Psi(t) = \int_0^t\,K(t-s)\,F(\Psi(s))\,ds\,, \quad t \leq T\,.
$$
Using the notation $\Delta t = \frac{T}{n}\,$, we can make the left-endpoint approximations 
\begin{equation}\label{eq:FdG_approx}
    \int_{t^n_i}^{t^n_{i+1}}\,F(\Psi(t^n_{i+1} - s))\,dG_{t^n_i}(s) \approx F(\Psi(\Delta t))\,G_{t^n_i}(t^n_{i+1}) \approx F(\Psi(\Delta t))\,\alpha^n_i \,,
\end{equation}
along with
\begin{equation}\label{eq:Psi_approx}
    \Psi(\Delta t) \approx \int_0^{\Delta t}\,K(s) \, ds\,F(\Psi(\Delta t)) = k_0^n\,F(\Psi(\Delta t)) ,
\end{equation}
for small $\Delta t\,$. We remark that $\Psi(\Delta t)$ is here approximated by the solution to the equation $x = k_0^n\,F(x)\,$, which is not quadratic yet. Using the fact that $\Psi(0) = 0$ so that $\Psi(\Delta t)$ is small, we can approximate $F$ by its second-order Taylor expansion, given by
\begin{equation}\label{eq:F_approx}
    F(u) = w + e^u - 1\approx w + u + \dfrac{u^2}{2},
\end{equation}
for small $u\,$. Combining all of our approximations, we obtain
$$
\mathbb E \left[\exp\left(w\, \left(\Lambda_{t^n_{i+1}} - \Lambda_{t^n_{i}}\right) \right)\,\middle|\, \mathcal F_{t^n_i} \right] \approx \exp\left(\widehat \Psi(\Delta t) \frac{\alpha^n_i}{k_0^n} \right) \,,
$$
where $\widehat \Psi(\Delta t)$ is the zero root with nonpositive real part of the second order polynomial equation
$$
\frac{k_0^n}{2}\,\left(\widehat \Psi(\Delta t)\right)^2 - (1 - k_0^n)\,\widehat \Psi(\Delta t) + w\,k_0^n = 0 \,.
$$
After computation, we obtain 
$$
\widehat \Psi(\Delta t) = \frac{1 - k_0^n}{k_0^n}\, \left(1 - \sqrt{1 - 2\,\left( \frac{k_0^n}{1 - k_0^n}\right)^2\,w} \right)\,,
$$
which corresponds to the characteristic functional of an Inverse Gaussian random variable (see Appendix \ref{App:IG}) {$\xi^n_i$} with parameters $\left( \frac{\alpha^n_i}{1 - k_0^n}\,, \left(\frac{\alpha^n_i}{k_0^n} \right)^2\right)\,$.
Finally, since $N$ is a counting process with integrated intensity $\Lambda\,$, conditional on $\Lambda_{t^n_{i+1}}- \Lambda_{t^n_i}$, we have
$$
N_{t^n_{i+1}} - N_{t^n_i} \sim \mathcal P \left(\Lambda_{t^n_{i+1}}- \Lambda_{t^n_i}\right) {\approx \mathcal P \left(\xi^n_i\right) }\,,
$$
where $\mathcal P(\lambda)$ denotes the Poisson distribution with parameter $\lambda\,$. 

It is worth noting that we use equation \eqref{eq:hatLambdasample} to compute the increment $\widehat\Lambda^n_{i,i+1}$, rather than setting $\widehat\Lambda^n_{i,i+1} = \xi_{i}^n$, as might be naturally suggested by the iVi scheme for Volterra processes introduced by \citet{jaber2025simulating}. This adjustment is due to the discrete nature of point processes: since $N^n_{i,i+1}$ must be an integer, it can no longer be directly extracted from the equation
\[
\alpha^n_i + k_0^n \widehat N^n_{i,i+1}  = \widehat \Lambda^n_{i,i+1},
\]
which arises from the discretization \eqref{eq:Lam_N_relationship}. Instead, after simulating $\widehat N^n_{i,i+1}$ as a Poisson random variable with the Inverse Gaussian parameter $\xi_{i}^n$, $\widehat\Lambda^n_{i,i+1}$ is determined from the equation above, leading to \eqref{eq:hatLambdasample} and ensuring the simulation of \emph{consistent} trajectories for both the jump process $N$ and its integrated intensity $\Lambda$. 
By consistency, we understand the Volterra-type relation between the discretized processes $\Lambda^n$ and $N^n$ (see Lemma~\ref{lemma:Volterra}) that is a crucial ingredient of the proof of convergence. In particular, this helps us avoid situations in which the simulated counting process $N^n$ has no jumps (all sampled Poisson random variables are null) and $\Lambda^n > \int_0^{\cdot}\,g_0(s)\,ds\,$, which should not be possible for a Hawkes process due to \eqref{eq:integrated_intensity}.

Combining the above, we arrive at Algorithm \ref{algo:ivi_hawkes}.

\section{Weak convergence of the Hawkes iVi scheme}

\label{sect:convergence}
In this section, we formulate a convergence result and sketch the main steps of its proof. The technical details are postponed to Section~\ref{sect:convergence_proof}. We assume that $g_0\,,K \in L^1([0\,,T]\,, \R_+)$ and we denote by $n_0$ the smallest natural $n$, such that $k_0^n < 1$, so that the scheme in Algorithm~\ref{algo:ivi_hawkes} is well-defined. For the sake of notational simplicity {only}, we will {assume that} $n_0 = 1$, but we stress that all the results hold for general $n_0$. In particular, our proof of convergence does not require $\int_0^T \,K(s)\,ds < 1\,$.

% for all $n \geq 1\,$\footnote{In fact, our arguments are valid as soon as $k_0^n < 1\,$, which is always true for large enough $n\,$.}, so that the scheme is well-defined.

    We consider the piecewise constant non-decreasing processes $\Lambda^n$ and $N^n$ defined in \eqref{eq:def_process_Lambda} and \eqref{eq:def_process_N} respectively.
We assume to all our random variables are defined on the same complete probability space $\left(\Omega, \mathcal F , \mathbb P\right)$ and we introduce the filtration 
\begin{equation}
\label{eq:def_filtration}
\mathcal F^n_t := \sigma\left(\widehat{\mathcal{F}}^n_t\, \bigcup \, \mathcal F_0\right)\,, \quad t \leq T\,, \quad n \geq 1\,,
\end{equation}
where 
$$
\widehat{\mathcal{F}}^n_t := \sigma\left(\widehat N^n_{i,i+1}\,, 0 \leq i \leq \lfloor nt /T \rfloor -1\right)\,, \quad t \leq T\,, \quad n \geq 1\,, 
$$
and $\mathcal F_0$ is the set of all $\mathbb P$-null elements of $\mathcal F\,$. For each $n \geq 1\,$, the filtration $\left(\mathcal F^n_t\right)_{t \leq T}$ is the one generated by the process $ N^n$ and satisfies the usual conditions. Note that both processes $\Lambda^n$ and $N^n$ are adapted to the filtration.

We first mention two convenient properties of our scheme, that are pivotal in the proof of weak convergence. Firstly, for each $n \geq 1\,$, the process $N^n - \Lambda^n$ is shown to be a martingale in Lemma \ref{lemma:martingales}, which is enjoyable since $N - \Lambda$ is a martingale in the limit. However, it is important to note that $\Lambda^n$ is not a predictable process, since at each step $\widehat \Lambda^n_{i,i+1}$ is set according to \eqref{eq:hatLambdasample} after sampling $\widehat N^n_{i,i+1}\,$. Second, the relation between $N^n$ and $\Lambda^n$ can also be rewritten as a stochastic Volterra equation, but with a measure-valued kernel (see Lemma \ref{lemma:Volterra}). More precisely, introducing the finite non-negative measure $K^n := \sum_{i = 0}^{n-1}\,k_i^n\,\delta_{t^n_i}$ on $[0\,,T]\,$, where the $\left(k^n_i\right)_{i \leq n-1}$ are defined in \eqref{eq:kij} and $\delta_{t}$ corresponds to the Dirac measure in $t\,$, we have
$$
\Lambda^n_t = \int_0^{\lfloor \frac{nt}{T} \rfloor \frac{T}{n}}\,g_0(s)\,ds + \int_{[0,t]}\,N^n_{t-s}\,K^n(ds)\,, \quad t \leq T\,.
$$
This relation explicits how our discretization procedure (see Subsection~\ref{subsec:dynamic_approx}) transforms the Volterra structure of \eqref{eq:integrated_intensity}.

Combining these properties with estimates on the processes, we prove the weak convergence of the scheme, which is given by the following theorem.
\begin{theorem}
\label{theorem:convergence}
    We have the weak convergence
    $$
    \left(\Lambda^n\,,N^n\right) \overset{n \to \infty}{\implies} \left(\Lambda, N\right)\,,
    $$
    in the Skorokhod $J_1$-topology, where $N$ is a Hawkes process on $[0\,,T]\,$ with exogenous intensity $g_0$ and memory kernel $K$, and $\Lambda$ is its integrated intensity.
\end{theorem}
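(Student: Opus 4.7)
The plan is the classical two-step weak-convergence strategy: first prove tightness of $\{(\Lambda^n, N^n)\}_{n \geq 1}$ in the Skorokhod $J_1$-topology, then identify any subsequential weak limit as the Hawkes pair driven by $(g_0, K)$. The two structural pillars are the measure-valued Volterra reformulation of Lemma~\ref{lemma:Volterra} and the martingale property of $M^n := N^n - \Lambda^n$ from Lemma~\ref{lemma:martingales}; together they put the problem into a setting where the stability result for stochastic Volterra equations gathered in Lemma~\ref{lemma:stability_equation} can be applied to pass to the limit.

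I would first derive uniform first and second moment bounds. Taking expectation in
\[
\Lambda^n_t = \int_0^{\lfloor nt/T\rfloor T/n} g_0(s)\,ds + \int_{[0,t]} N^n_{t-s}\, K^n(ds),
\]
and using $\mathbb E[N^n_t] = \mathbb E[\Lambda^n_t]$ yields a linear discrete Volterra inequality for $\mathbb E[N^n_t]$, which I would close by comparison with the resolvent of the second kind of $K$, whose total variation norm dominates that of the discrete resolvent of $K^n$ uniformly in $n$. A Burkholder--Davis--Gundy bound on the pure-jump martingale $M^n$, whose quadratic variation is controlled by $N^n$ and $\Lambda^n$, then upgrades these estimates to $\sup_n \mathbb E[(N^n_T)^2] < \infty$. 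From there, $J_1$-tightness follows from Aldous' criterion: since both coordinates are non-decreasing, it suffices to bound the expected increments $\mathbb E[N^n_{\tau+\delta}-N^n_\tau]$ and $\mathbb E[\Lambda^n_{\tau+\delta}-\Lambda^n_\tau]$ at stopping times $\tau$, and the Volterra representation delivers a control of order $\delta$ uniformly in $n$.

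For the identification, fix a weakly convergent subsequence with limit $(\bar\Lambda, \bar N)$. The measures $K^n = \sum_i k^n_i\,\delta_{t^n_i}$ converge vaguely to $K(s)\,ds$ on $[0,T]$ (by Riemann-sum convergence), so Lemma~\ref{lemma:stability_equation} lets me pass to the limit in the measure-valued Volterra equation and obtain
\[
\bar\Lambda_t = \int_0^t g_0(s)\,ds + \int_0^t K(t-s)\,\bar N_s\,ds.
\]
The martingale property of $M^n$ descends to $\bar N - \bar\Lambda$ via a standard test-function argument on the Skorokhod space, leveraging the uniform second moment bound for uniform integrability.

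The main obstacle, explicitly flagged in the introduction, is to establish that $\bar N$ is a counting process with unit jumps: the Poisson increments $\widehat N^n_{i,i+1}$ can exceed one, and the unit-jump property is not automatic under $J_1$ convergence. I would exploit the conditional Poisson structure: given $\xi^n_i$,
\[
\mathbb P\bigl(\widehat N^n_{i,i+1} \geq 2 \mid \xi^n_i\bigr) = 1 - e^{-\xi^n_i}(1 + \xi^n_i) \leq \tfrac{1}{2}(\xi^n_i)^2,
\]
and the Inverse Gaussian second moment formulae of Appendix~\ref{App:IG}, combined with the uniform first moment bound which forces $\mathbb E[\alpha^n_i] = O(1/n)$, give $\mathbb E[(\xi^n_i)^2] = O(1/n^2)$ uniformly in $i$. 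A union bound over the $n$ intervals then yields $\mathbb P(\max_i \widehat N^n_{i,i+1} \geq 2) = O(1/n) \to 0$, so with probability tending to one every jump of $N^n$ is of size exactly one. Since $J_1$-convergence of càdlàg step functions with unit jumps to a càdlàg limit preserves the unit-jump structure (each jump of the limit arises from a single jump of the prelimit after time change), the property transfers to $\bar N$. The triple (Volterra equation, martingale $\bar N - \bar\Lambda$, unit jumps) uniquely characterizes the Hawkes pair with parameters $(g_0, K)$, so all subsequential limits coincide with $(\Lambda, N)$ and the full sequence converges weakly.
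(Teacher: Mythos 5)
Your overall architecture is the same as the paper's (measure-valued Volterra reformulation, martingale $N^n-\Lambda^n$, moment bounds, Aldous-type tightness, stability lemma for identification, unit-jump control, Hawkes--Oakes uniqueness), but two of your key justifications have genuine gaps. The most serious one is in the unit-jump step, which is precisely the delicate point. The uniform first-moment bound only controls the sum $\sum_i \mathbb E[\alpha^n_i] \le \mathbb E[\Lambda^n_T]$; it does \emph{not} give $\mathbb E[\alpha^n_i]=O(1/n)$ uniformly in $i$, and in the stated generality ($g_0, K$ merely in $L^1([0,T],\R_+)$) that claim is false: with a singular kernel one has $k^n_1=\int_{T/n}^{2T/n}K(s)\,ds \sim c\,(T/n)^{\alpha}$ with $\alpha<1$, and with an unbounded $g_0$ one has $\alpha^n_0=\int_0^{T/n}g_0 \gg 1/n$. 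Moreover $\mathbb E[(\xi^n_i)^2]$ involves $\mathbb E[(\alpha^n_i)^2]$, a second moment that your first-moment bound cannot control, so the asserted $\mathbb E[(\xi^n_i)^2]=O(1/n^2)$ and the resulting $O(1/n)$ union bound are unjustified. The paper instead bounds
\[
\sum_{i=0}^{n-1}\mathbb P\bigl(\widehat N^n_{i,i+1}\ge 2\bigr)\;\le\;\mathbb E\Bigl[\Lambda^n_T\Bigl(\tfrac{(k_0^n)^2}{(1-k_0^n)^3}+\max_{0\le j\le n-1}\widehat\Lambda^n_{j,j+1}\Bigr)\Bigr],
\]
and shows the maximal increment vanishes via the pathwise increment bound \eqref{eq:bound_variation_alpha} together with the $L^1$-continuity of translations of $K$ (Lemma~\ref{lemma:translation_continuous_L1}); some argument of this type is needed, and your per-interval estimate cannot be repaired without extra assumptions on $K$ and $g_0$.

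Second, your closure of the moment bound by ``comparison with the resolvent of $K$, whose total variation dominates that of the discrete resolvent of $K^n$'' goes in the wrong direction: $K^n$ pushes mass to the left ($K^n([0,t])\ge\int_0^t K(s)\,ds$), so the resolvent of $K^n$ dominates that of $K$, not the other way around, and a uniform-in-$n$ bound requires a different device — the paper uses an exponentially weighted norm, choosing $\theta$ with $C\int_0^T e^{-\theta s}K(s)\,ds\le\tfrac12$ and checking $\int e^{-\theta s}K^n(ds)\le e^{\theta T/n}\int e^{-\theta s}K(s)\,ds$. Two smaller points: your claim that the Volterra representation gives increments ``of order $\delta$'' uniformly in $n$ fails for singular kernels and $L^1$ baselines (only a modulus vanishing as $\delta\to0^+$ is available, which is what Aldous' criterion actually needs); and to invoke Lemma~\ref{lemma:stability_equation} you must know the limit $\bar\Lambda$ is continuous, i.e.\ you need C-tightness of $(\Lambda^n)$ (maximal jumps of $\Lambda^n$ vanishing), which your tightness argument does not record. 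With these repairs your proof would coincide with the paper's.
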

\begin{proof}
Lemma \ref{lemma:tightness} gives the tightness of the couple of processes. Lemma \ref{lemma:characterization_limit} then characterizes any weak accumulation point $(\Lambda\,, N)\,$, showing that $N$ is a counting process (restricted to $[0\,,T]$) with integrated intensity $\Lambda = G_0 + K * N\,$.  The uniqueness in law follows from the first lemma in \citet{hawkes1974cluster}, with $H_0 = \emptyset$ in our case. Since all accumulation points have the same law, we obtain weak convergence of the sequence.
\end{proof}

\section{Numerical results}\label{sect:numerics}
In this section, we compare the iVi Hawkes scheme with two popular approaches for simulating Hawkes processes, the population-based method (see, for instance, \citet{Moller2005}) and the thinning method introduced by \citet{Ogata81}, both in terms of the marginal laws of the processes $N$ and $\Lambda$, and the distributions of the arrival times.

\subsection{Population approach and Ogata's algorithm}
We begin by recalling the ideas behind these two methods and highlighting their respective advantages and disadvantages.

\paragraph{Population approach}
The population approach \citep{Moller2005} models the Hawkes process as a superposition of non-homogeneous Poisson processes of two types:
\begin{enumerate}
    \item \textit{``Migrants''}, or arrival times generated by the Poisson component $g_0$ of the intensity.
    \item \textit{``Descendants''}, i.e., arrivals triggered by previous arrivals (of both types) corresponding to the terms $K(t - \tau), \, t \geq \tau$ of the intensity for a given arrival time $\tau$.
\end{enumerate}
This decomposition reveals the tree structure of the Hawkes process and reduces the simulation problem to simulating non-homogeneous Poisson processes applied sequentially to successive ``generations'' of arrival times. Simulation of a non-homogeneous Poisson process is straightforward when the inverse integrated intensity is available, thanks to the following result (cf. e.g., \citet{brown1988}).

\begin{theorem}[Random time change]\label{thm:time_change}
    Let $(\tau_1, \ldots, \tau_{N_T})$ be a realization of a point process with integrated intensity $\Lambda$ over $[0, T]$ and $\Lambda(T) < \infty$ a.s. Then, the transformed arrivals $(\Lambda(\tau_1), \ldots, \Lambda(\tau_{N_T}))$ form a standard Poisson process with unit rate on $[0, \Lambda(T)]$.
\end{theorem}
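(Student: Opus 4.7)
The plan is to combine a martingale time-change argument with Watanabe's characterization of the Poisson process. Let $(N_t)_{t \in [0,T]}$ be the counting process with jumps at $(\tau_i)$, adapted to some filtration $(\mathcal{F}_t)$ for which $\Lambda$ is the compensator of $N$. By definition this means $M_t := N_t - \Lambda_t$ is an $(\mathcal{F}_t)$-local martingale, and $\Lambda$ is continuous, non-decreasing, and finite at $T$ almost surely.

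First I would introduce the generalized inverse time-change $\sigma(u) := \inf\{t \in [0,T] : \Lambda_t > u\}$ for $u \in [0, \Lambda(T)]$, which is a non-decreasing right-continuous family of $(\mathcal{F}_t)$-stopping times bounded by $T$. Defining $\tilde N_u := N_{\sigma(u)}$ and the time-changed filtration $\tilde{\mathcal{F}}_u := \mathcal{F}_{\sigma(u)}$, the transformed arrivals are precisely the values $\Lambda(\tau_i)$, which are the jump times of $\tilde N$. A preliminary observation is that $\tilde N$ is a bona fide counting process with unit jumps: since the intensity is almost surely strictly positive in any neighbourhood containing a jump of $N$, the map $t \mapsto \Lambda_t$ is strictly increasing across each $\tau_i$, hence the $\Lambda(\tau_i)$ are distinct and each jump of $\tilde N$ is of size one.

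Next, I would apply the optional sampling theorem for càdlàg local martingales to $M$ along the bounded stopping times $\sigma(u)$. This yields that $\tilde M_u := M_{\sigma(u)} = \tilde N_u - u$ is an $(\tilde{\mathcal{F}}_u)$-local martingale on $[0, \Lambda(T)]$, so the $(\tilde{\mathcal{F}}_u)$-compensator of $\tilde N$ is the deterministic identity $u \mapsto u$. Watanabe's characterization theorem (a unit-jump counting process with deterministic compensator equal to the identity is a standard Poisson process) then concludes the proof.

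The main delicacy lies in the possibility of flat segments of $\Lambda$, which occur whenever the intensity vanishes on a subinterval. On such flats $\sigma$ has jumps, and one must verify that no mass of $N$ is lost or spuriously created by the time-change. This is resolved by the standard time-change machinery for semimartingales (see, e.g., Jacod--Shiryaev): $N$ cannot jump on a flat of $\Lambda$, because the compensator of a jump necessarily puts positive mass at that point, so $\Lambda(\tau_i)$ is an increase time of $\Lambda$ almost surely and the time-change faithfully transports the jump structure.
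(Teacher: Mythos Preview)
The paper does not prove this theorem at all: it is stated as a classical result with a reference to \citet{brown1988} and is used only as a tool to justify the population simulation algorithm and the time-change diagnostic in Section~\ref{sect:numerics}. There is therefore no ``paper's own proof'' to compare against.

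Your argument is the standard one---time-change via the right-continuous inverse of the compensator, followed by Watanabe's characterization---and the outline is correct. A couple of the justifications are slightly loose. The sentence ``the intensity is almost surely strictly positive in any neighbourhood containing a jump of $N$'' presumes an intensity density and is not the right way to rule out jumps on flats; your final paragraph gets closer, but the clean argument is that for any predictable set $A$ on which $d\Lambda$ puts zero mass, $\int_0^T \mathbf 1_A\, dN$ has compensator $\int_0^T \mathbf 1_A\, d\Lambda = 0$ and is non-negative, hence vanishes a.s., so $dN$ is carried by the increase set of $\Lambda$. Likewise, to conclude $\Lambda_{\sigma(u)} = u$ you should invoke continuity of $\Lambda$ explicitly. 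With these details filled in (or simply a pointer to the time-change theorem for point processes in, e.g., Brémaud or Jacod--Shiryaev), the proof is complete.
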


Hence, to simulate a non-homogeneous Poisson process with (deterministic) integrated intensity $\Lambda$, one simply simulates a standard Poisson process on $[0, \Lambda(T)]$ and then applies $\Lambda^{-1}$ to its arrival times. Algorithm~\ref{alg:population} summarizes this procedure. We use the notation $\bar K(t) := \int_0^t K(s)\,ds$ for the integrated kernel.

\begin{algorithm}[H]
\caption{\textbf{Population approach}}\label{alg:population}
\begin{algorithmic}[1]
\State \textbf{Input:} $G_0, G_0^{-1}, \bar K, \bar K^{-1}$.
\State \textbf{Output:} event times $\mathcal{T} = (\tau_i)_{i=1\ldots, N_T}$.
\State Initialize empty sets: parent queue $\mathcal{P} \gets \varnothing$, output set $\mathcal{T} \gets \varnothing$.
\State Simulate ``migrants'' as an inhomogeneous Poisson process on $[0,T]$ with integrated intensity $G_0$, yielding arrivals $\tau_1^0, \ldots, \tau_{n_0}^0$, where $n_0$ denotes the total number of arrivals on $[0, T]$.
\State Add $\{\tau_1^0, \ldots, \tau_{n_0}^0\}$ to $\mathcal{P}$.
\While{$\mathcal{P} \neq \varnothing$}
    \State Remove the first element $\tau$ from $\mathcal{P}$.
    \State Simulate arrivals $\tilde\tau_1^\tau, \ldots, \tilde\tau_{n_\tau}^\tau$ of a standard Poisson process on $[0, \bar K(T-\tau)]$.
    \State Transform to descendant event times: $\tau_i^\tau \gets \tau + \bar K^{-1}(\tilde\tau_i^\tau)$ for $i = 1,\ldots,n_\tau$.
    \State Add $\{\tau_1^\tau, \ldots, \tau_{n_\tau}^\tau\}$ to $\mathcal{P}$
    \State Add $\tau$ to $\mathcal{T}$.
\EndWhile
\State Sort $\mathcal{T}$ in ascending order.
\end{algorithmic}
\end{algorithm}

\paragraph{Ogata's algorithm}
The algorithm introduced by \citet{Ogata81} generalizes the standard thinning algorithm. The main idea is as follows: if the kernel $K$ is finite and decreasing, and $g_0$ is bounded, then for $t \in [0, T]$, given $\mathcal{F}_t$, one can bound the intensity of the Hawkes process between $t$ and the next arrival by
\[
M = \max\limits_{u \in [t, T]} \{g_0(u)\} + \sum\limits_{\tau_i \leq t} K(t - \tau_i),
\]
and apply the standard thinning method (cf.~\citet{Lewis1979}). Here, $(\tau_i)_{i = 1, \ldots, N_t}$ denotes the previously simulated arrivals. We summarize Ogata's method in Algorithm~\ref{alg:ogata}. 

\begin{algorithm}[H]
\caption{ \textbf{- Ogata's algorithm}}\label{alg:ogata}
\begin{algorithmic}[1]
\State \textbf{Input:} \(g_0, K\) with $\max_{u \in [0, T]}g_0(u) < \infty$ and decreasing $K$, such that $K(0) < \infty$.
\State \textbf{Output:} \(\mathcal{T} = (\tau_i)_{i = 1, \ldots, N_T}\).
\State Set $t \gets 0$.
\While{$t < T$}
\State Set $M \gets \max\limits_{u \in [t, T]} \{g_0(u)\} + \sum\limits_{\tau_i \leq t} K(t - \tau_i)$.
\State Simulate $\xi \sim \mathrm{Exp}(1 / M)$ and $U \sim \mathcal{U}(0, 1)$
\State Set $\tau \gets t + \xi$.
\State If $\tau < T$ and $U < \frac{g_0(\tau) + \sum_{\tau_i \leq t} K(\tau - \tau_i)}{M}$, set $\tau_{i+1} \gets \tau$.
\State Set $t \gets \tau$.
\EndWhile
\end{algorithmic}
\end{algorithm}

Both methods, as presented, allow for exact simulation of the Hawkes process. However, the population approach requires knowledge of the inverse integrated kernel, which may be problematic for complex kernels like the Mittag--Leffler kernel used by \citet*{Chen2021}. In addition, this kernel and the fractional kernel $K(t) = \dfrac{c}{\Gamma(\alpha)}t^{\alpha - 1}, \alpha < 1$ violate the assumption $K(0) < \infty$ required by Ogata's algorithm. Moreover, the monotonicity assumption is violated by Erlang or, more generally, gamma kernels (see, e.g.,~\citet{Duarte}).

To handle singular kernels, \citet{Chen2021} propose shifting the kernel by a small $\epsilon > 0$ when computing $M$. In this case, Ogata's algorithm is no longer exact, though the error is almost negligible. Similarly, to overcome the monotonicity requirement, one may use any decreasing kernel $K^m \geq K$ with $K^m(0) < \infty$ to compute $M$. However, this increases the number of simulations in the acceptance–rejection procedure and thus reduces the efficiency of the algorithm.

\subsection{Numerical experiments}
\label{subsec:numerical_experiments}
For the numerical illustration, we will use the gamma kernel
$$
K_{\Gamma}(t) = ce^{-b t}\dfrac{t^{\alpha - 1}}{\Gamma(\alpha)}, \quad b > 0, \ \alpha > 0, \ c > 0, \ t \geq 0,
$$
with the parameters $b = 3, \, \alpha = 2, \, c = 0.9b^\alpha$ and $\mu = 5,\, T = 1$. Note that when $\alpha$ is an integer, the gamma kernel reduces to the Erlang kernel widely used in the applications of the Hawkes processes.

% \paragraph{Expected values.}
% It follows immediately from \eqref{eq:integrated_intensity} that the expectation
% $$
% m_t = \mathbb{E}[N_t] = \mathbb{E}[\Lambda_t],  
% $$
% satisfies the equation
% $$
% m_t = \bar g_0(t) + \int_0^t K(t - s) m_s \, ds,
% $$
% and hence, is given by 
% \begin{equation}\label{eq:mean_formula}
%     m_t = \bar g_0(t) + \int_0^t R(t - s) g_0(s) \, ds,
% \end{equation}
% where $R$ denotes the resolvent of $K$, see Appendix~\ref{sect:kernels}. 
Figure~\ref{fig:mean} shows the sample trajectories of $N$, $\Lambda$ and the instantaneous intensity $\lambda$ generated with the iVi scheme with time step $0.001$. {The instantaneous intensity is computed using the left-endpoint approximation
$$
\lambda_{t_i} = g_0(t_i) + \int_0^{t_i}K(t_i-s)\,dN_s  = g_0(t_i) + \sum_{j=0}^{i-1}\int_{t_j}^{t_{j+1}}K(t_i-s)\,dN_s \approx g_0(t_i) + \sum_{j=0}^{i-1}K(t_i-t_j)\widehat N_{j, j+1}^n,
$$
for $i = 0, \ldots, n$.}
% and the convergence of the empirical means to their theoretical values.
\begin{figure}[H]
    \begin{center}
    \includegraphics[width=1\linewidth]{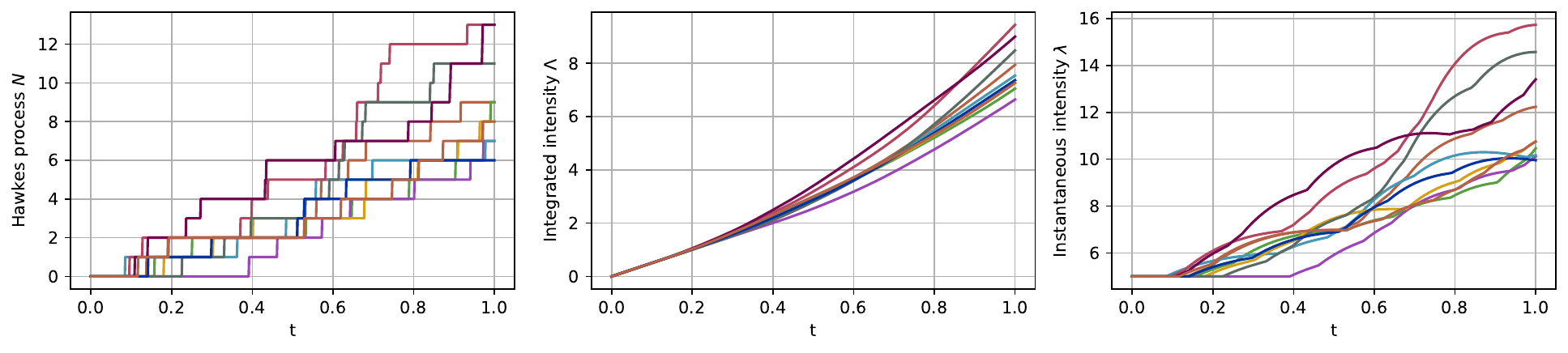}
    \caption{Sample trajectories simulated with the iVi scheme with time step $0.001$.}
    \label{fig:mean}
    \end{center}
\end{figure}
To illustrate the convergence of the Hawkes iVi scheme, we will examine:
\begin{enumerate}
% \item The convergence of the expected values of the simulated processes $N_t$ and $\Lambda_t$ to their exact counterparts for $t \in [0, T]$.
\item The marginal laws of $N_T$ and $\Lambda_T$.
\item The distribution of the arrival times $\tau_1, \ldots, \tau_{N_T}$.
\end{enumerate}

\paragraph{Laplace transforms.}
To illustrate the convergence of the marginal laws, we compute the Laplace transforms
$
    \phi_{N_T}(w) = \exp\left(w N_T\right)
$
and
$
    \phi_{\Lambda_T}(w) = \exp\left(w \Lambda_T\right)
$
for different numbers of time steps. The convergence of the Monte Carlo estimator of the characteristic function, based on $10^6$ trajectories and with $w = -\dfrac{1}{\mathbb{E}[N_T]}$, is shown in Figure~\ref{fig:cf_conv}. The horizontal bar indicates three standard deviations of the estimator. Figure~\ref{fig:error_sims} reports the computation time required to simulate $10^5$ trajectories, together with the corresponding estimation error, for the iVi scheme under different discretization steps. The horizontal bars indicate the simulation time of the exact methods. These results demonstrate that the iVi scheme can provide substantial acceleration over Ogata's algorithm and the population-based approach while maintaining high precision.

\begin{figure}[H]
    \begin{center}
    \includegraphics[width=1\linewidth]{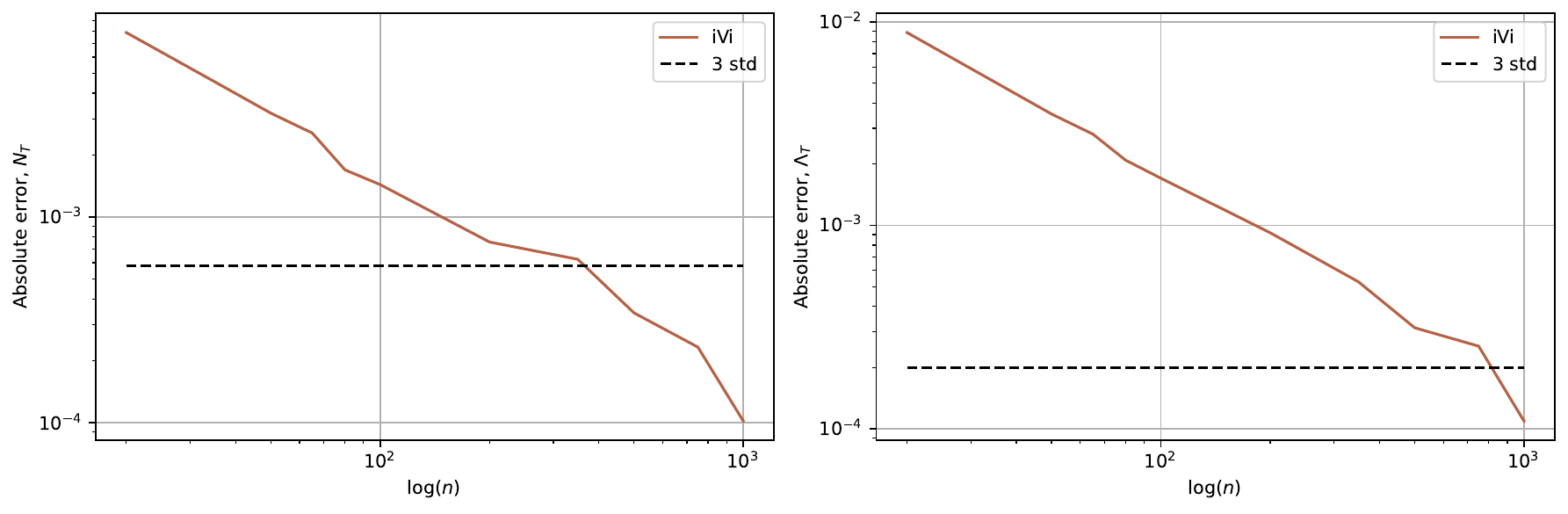}
    \caption{Convergence of the Laplace transform with $w = -\frac{1}{\mathbb{E}[N_T]}$ for $N_T$ (left) and $\Lambda_T$ (right). The $x$-axis shows the number of time steps, and the $y$-axis shows the absolute error of the Monte Carlo estimator.}
    \label{fig:cf_conv}
    \end{center}
\end{figure}

\begin{figure}[H]
    \begin{center}
    \includegraphics[width=1\linewidth]{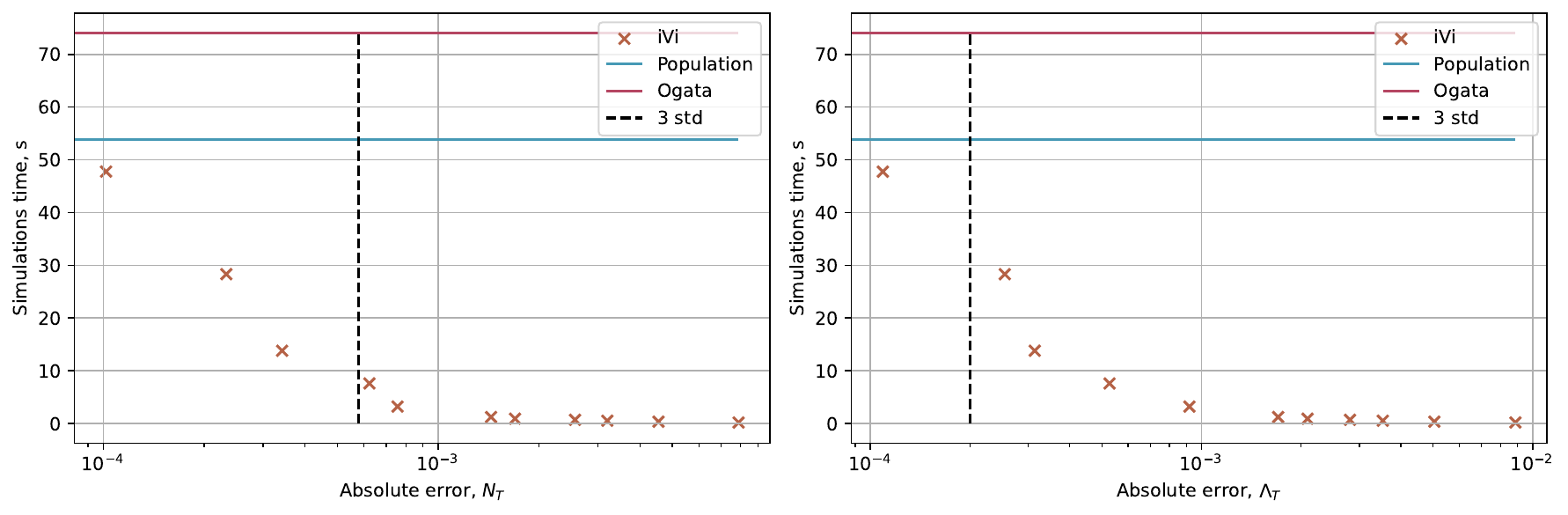}
    \caption{Simulation time for $10^5$ trajectories ($y$-axis) versus the absolute error of the Monte Carlo estimator of the Laplace transform ($x$-axis). Crosses represent estimators obtained with the iVi scheme under different discretization steps. Horizontal bars indicate the exact methods: Population (blue) and Ogata (purple).}
    \label{fig:error_sims}
    \end{center}
\end{figure}

\paragraph{Comparison of marginal laws.}
The proximity of the exact marginal laws $\mathcal{L}(N_T)$ and $\mathcal{L}(\Lambda_T)$ to their counterparts generated by the iVi scheme is assessed by simulating $N = 10^4$ trajectories with each scheme and comparing the empirical distributions of $N_T$ and $\Lambda_T$ with those obtained from the exact population approach. In particular, we plot both the empirical distributions and the corresponding Q–Q plots against samples generated by the exact method. For the iVi scheme, the Kolmogorov–-Smirnov test for $\Lambda_T$ yields a $p$-value equal to $0.92$. The empirical distributions of $N_T$ and $\Lambda_T$ are shown in Figures~\ref{fig:marginal_N} and \ref{fig:marginal_U}, respectively.

\begin{figure}[H]
    \begin{center}
    \includegraphics[width=1\linewidth]{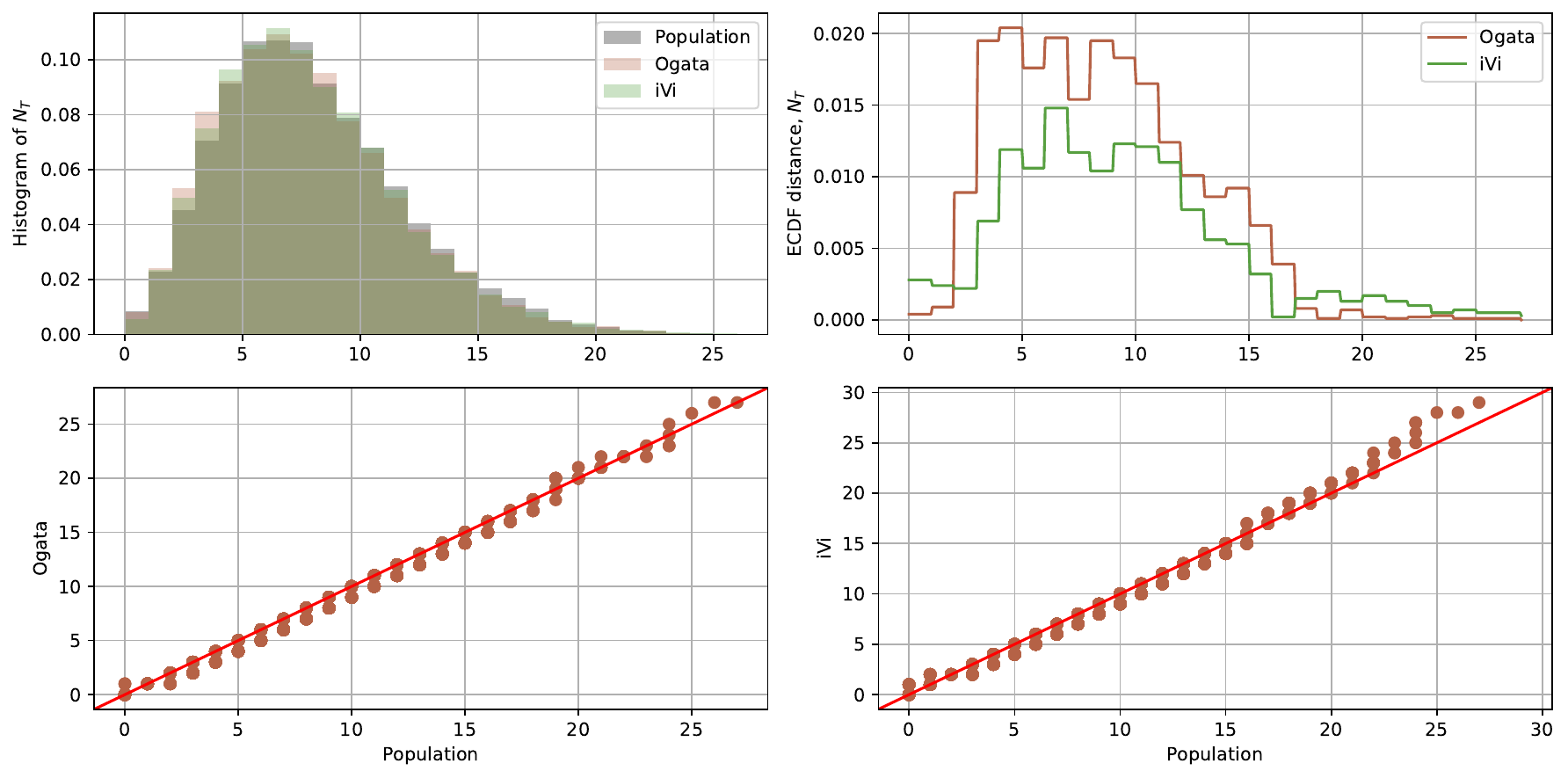}
    \caption{Marginal law of $N_T$. Empirical densities (top left), distances to the empirical CDF of the population method (top right), Q-Q plots for Ogata algorithm (bottom left) and iVi scheme (bottom right). A time step $0.01$ was taken for the iVi scheme.}
    \label{fig:marginal_N}
    \end{center}
\end{figure}

\begin{figure}[H]
    \begin{center}
    \includegraphics[width=1\linewidth]{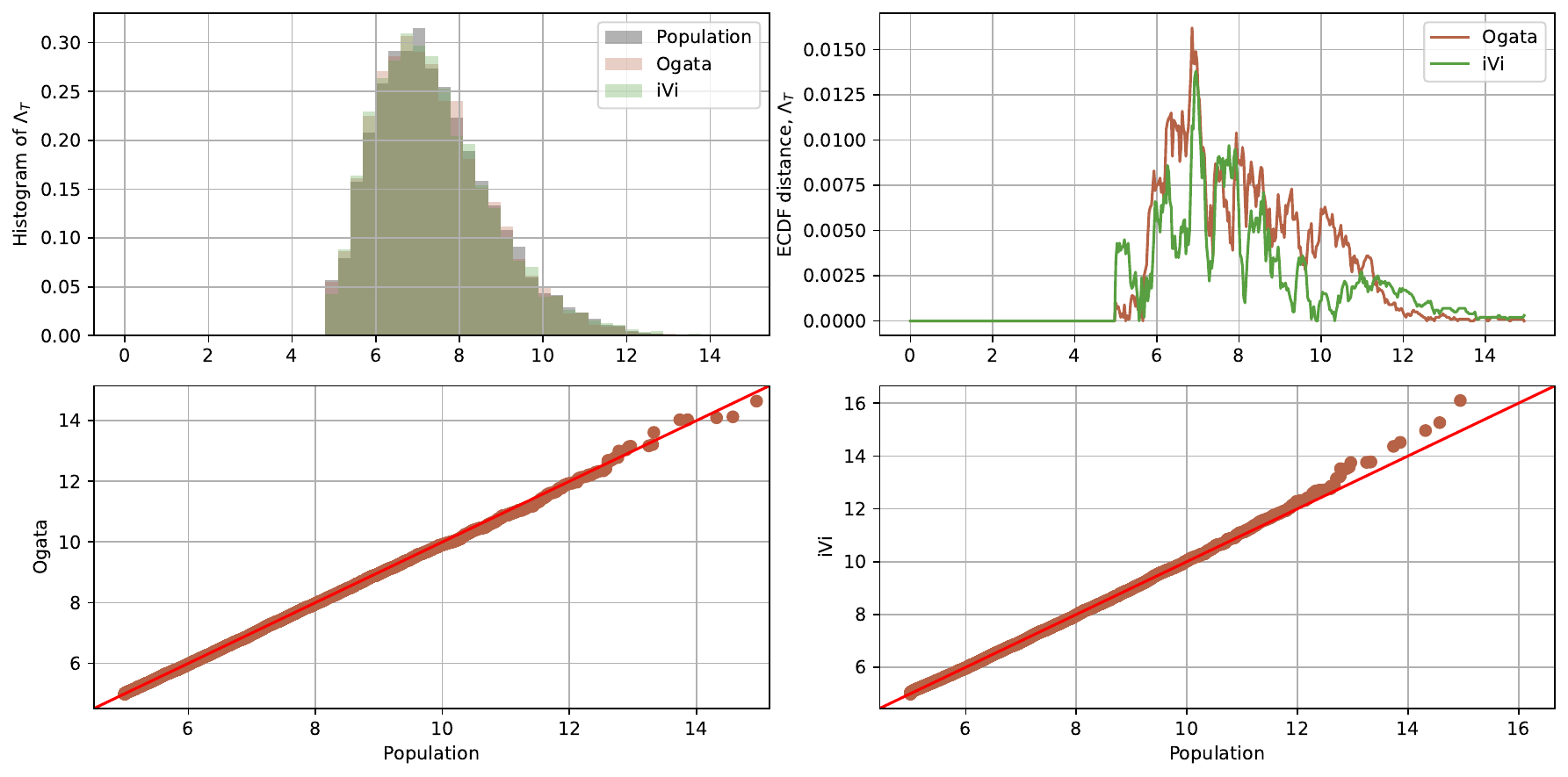}
    \caption{Marginal law of $\Lambda_T$. Empirical densities (top left), distances to the emperical CDF of the population method (top right), Q-Q plots for Ogata algorithm (bottom left) and iVi scheme (bottom right). A time step $0.01$ was taken for the iVi scheme.}
    \label{fig:marginal_U}
    \end{center}
\end{figure}

\paragraph{Distribution of arrival times.}  
In the iVi scheme, the arrival times are simulated uniformly based on the simulated increments $\widehat N_{i, i+1}^n$ over each discretization interval. Essentially, this means that the instantaneous jump intensity $\lambda$ is approximated on each interval $[t_i, t_{i+1}]$ by 
$$
\widehat{\lambda}_i^n = \dfrac{\widehat\Lambda^n_{i, i+1}}{t^n_{i+1} - t^n_{i}}.
$$
To assess the quality of this approximation, we perform standard time--change tests, see, e.g.,~\citet{ogata1988statistical}. Specifically, we simulate the arrival times $\mathcal{T}^n = (\tau_i^n)_{i = 1, \ldots, N_T^n}$, compute  
\[
\tau^{n,*}_i = \Lambda_{\tau_i^n}, \quad i = 1, \ldots, N_T^n,
\]  
using \eqref{eq:integrated_intensity}, and verify that $(\tau^{n,*}_i)_{i = 1, \ldots, N_T^n}$ follow the law of a standard Poisson process (by Theorem~\ref{thm:time_change}). To this end, we compare the empirical distribution of the interarrival times $(\tau^{n,*}_{i+1} - \tau^{n,*}_i)_{i=1, \ldots, N_T^n-1}$ with the exponential distribution $\mathrm{Exp}(1)$, corresponding to the waiting times of a Poisson process. In addition, we plot the pairs $(e^{-\tau^{n,*}_i}, e^{-\tau^{n,*}_{i+1}})_{i=1, \ldots, N_T^n-1}$, which should be independent and uniformly distributed on $[0,1]$.  

For the iVi scheme with discretization step $0.1$ and $T = 10$, the Kolmogorov--Smirnov test for the interarrival times $(\tau^{n,*}_{i+1} - \tau^{n,*}_i)_{i=1, \ldots, N_T^n-1}$ yields a $p$-value of $0.43$. The empirical distribution of the transformed arrivals over $[0,10]$ is shown in Figure~\ref{fig:arrivals}.  

\begin{figure}[H]
    \begin{center}
    \includegraphics[width=1\linewidth]{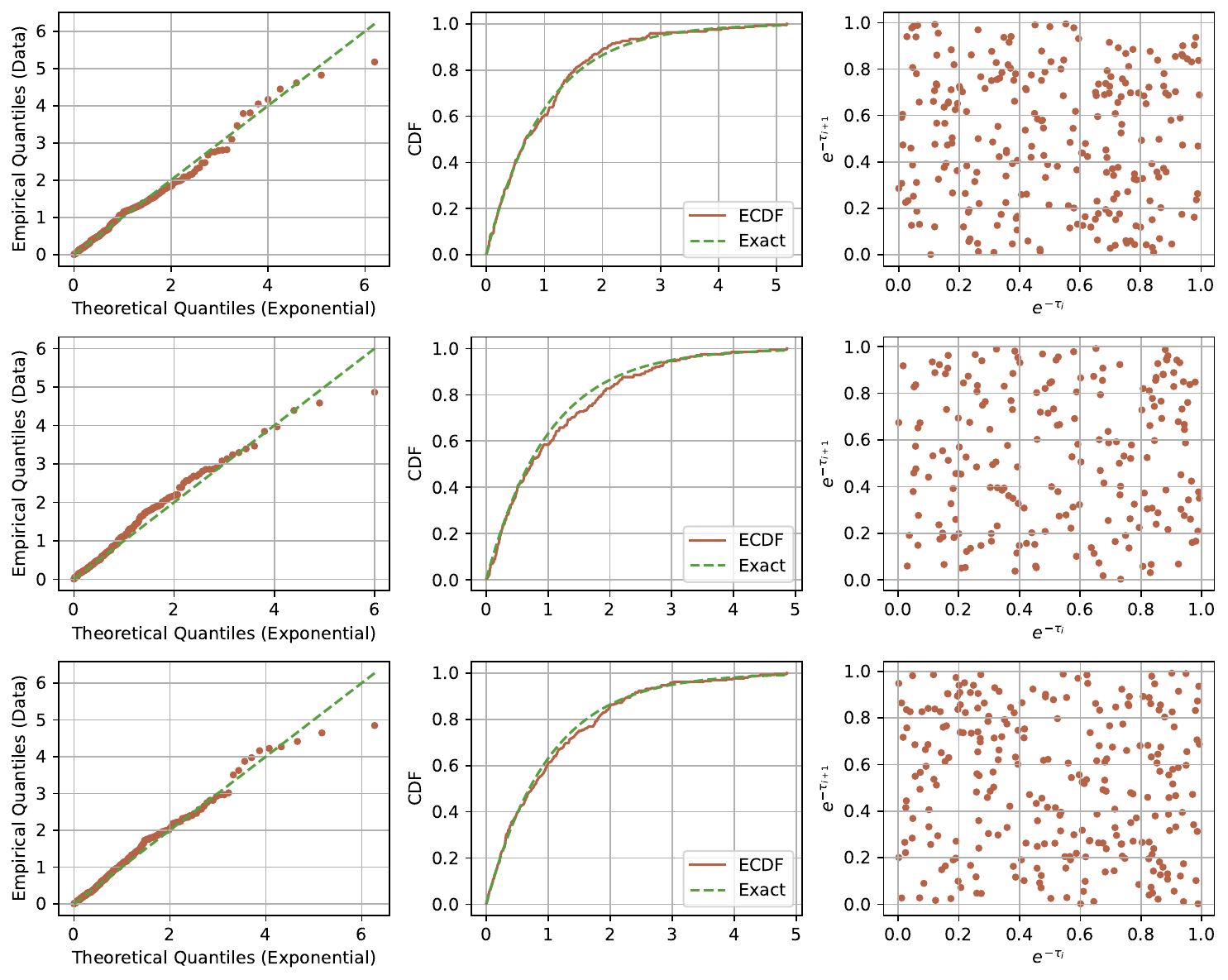}
    \caption{Transformed arrival times generated by the population approach (top), Ogata's algorithm (middle), and the iVi scheme (bottom). The columns correspond to: Q--Q plots against the exact law (left), empirical CDFs (middle), and scatter plots of $(e^{-\tau^{n,*}_i}, e^{-\tau^{n,*}_{i+1}})_{i=1, \ldots, N_T^n-1}$ (right).}
    \label{fig:arrivals}
    \end{center}
\end{figure}

\paragraph{Importance of the implicit step.}
We conclude this subsection by illustrating the relevance of the Inverse Gaussian simulation step \eqref{eq:hatNsample}. The motivation for introducing the random variable $\xi_i^n$ lies in the implicit discretization scheme for the characteristic function described in Subsection~\ref{subsec:deriving_ig}. A simpler alternative would be to use an explicit scheme: first approximate $U_{t_{i+1}} - U_{t_i} \approx \alpha_i^n$, then simulate $\widehat N^n_{i,i+1} \sim \mathcal{P}(\alpha_i^n)$, and finally update the integrated intensity using \eqref{eq:hatLambdasample}. We refer to this approach as the \textit{explicit integrated intensity scheme}.

Note that this scheme can be viewed as a refined version of the discretization method presented in \citep[Theorem~2.4]{kirchner2017estimation}, since we use the integrated kernel instead of discretizing the kernel itself. However, in high-activity regimes this scheme proves to be significantly less efficient, as we illustrate below for the fractional kernel
$$
K_H(t) = \dfrac{ct^{H - 0.5}}{\Gamma(H + 0.5)}, \quad t \geq 0,
$$
with $H = 0.1,\ c = 0.1$ and $\mu = 5$. We plot the convergence results for the Laplace transforms
\[
\phi_{N_T}(w) = \exp\left(w N_T\right) \quad \text{and} \quad 
\phi_{\Lambda_T}(w) = \exp\left(w \Lambda_T\right),
\]
with $w = -\frac{1}{\mathbb{E}[N_T]}$ and $T = 30$, in Figure~\ref{fig:cf_conv_explicit}. We observe that in this case, the iVi scheme is more accurate and exhibits a higher empirical rate of convergence than the explicit integrated intensity scheme, which justifies the use of the implicit approach we propose.

\begin{figure}[H]
    \begin{center}
    \includegraphics[width=1\linewidth]{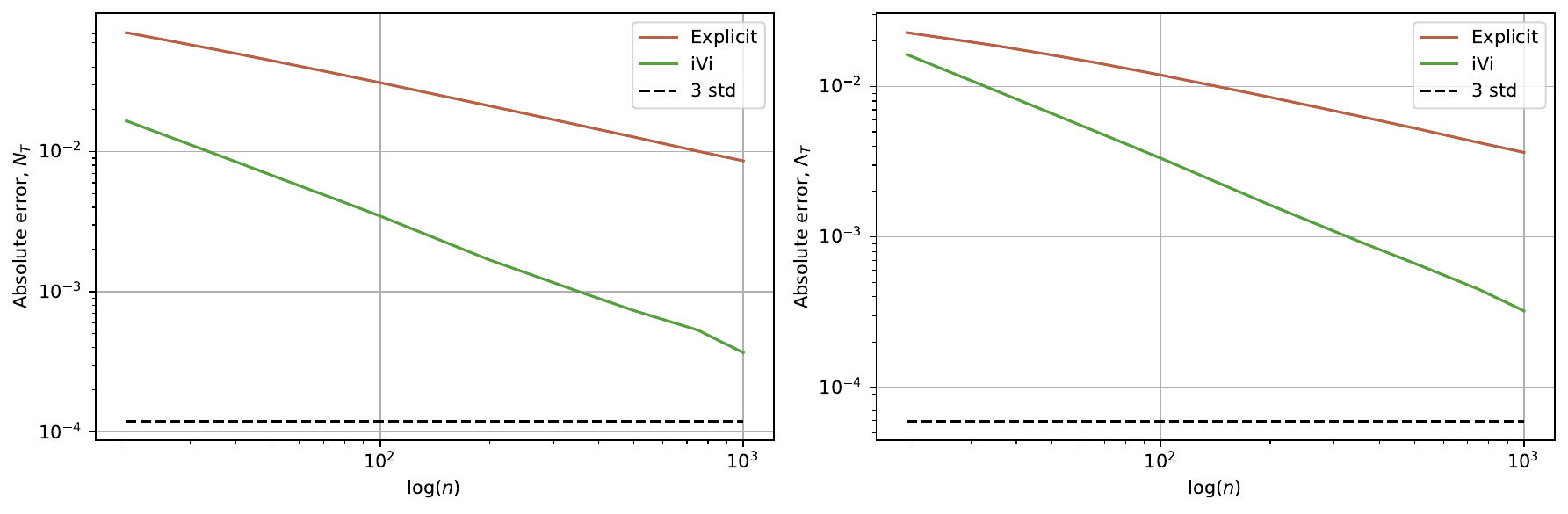}
    \caption{Convergence of the Laplace transform with $w = -\frac{1}{\mathbb{E}[N_T]}$ for $N_T$ (left) and $\Lambda_T$ (right). The $x$-axis shows the number of time steps, and the $y$-axis shows the absolute error of the Monte Carlo estimator.}
    \label{fig:cf_conv_explicit}
    \end{center}
\end{figure}

\subsection{A resolvent version of the iVi scheme}
\label{subsec:resolvent_scheme}
In this subsection, we show how the numerical performance of the iVi scheme for Hawkes processes can be significantly improved by using the resolvent of its kernel $K$, i.e., the kernel $R \in L^1([0, T], \mathbb{R})$ satisfying
\begin{equation*}
    R * K = R - K.
\end{equation*}
We refer to Appendix~\ref{app:kernels} for the precise definition and a list of commonly used kernels along with their resolvents.

Recall that the dynamics of the integrated intensity are given by
\begin{equation}\label{eq:integrated_intensity_2}
\Lambda_t = \int_0^t g_0(s)\,ds + \int_0^t K(t-s)\,N_s\,ds, \quad t \leq T.
\end{equation}

The following proposition allows us to rewrite \eqref{eq:integrated_intensity_2} in terms of the resolvent $R$.
\begin{proposition}
    The integrated intensity $\Lambda$ satisfies 
    \begin{equation}\label{eq:integrated_variance_res}
        \Lambda_t = G_0^R(t) + \int_0^t R(t-s)Z_s\,ds,
    \end{equation}
    where $Z_t := N_t - \Lambda_t$ and
    \[
    G_0^R(t) := \int_0^t g_0(s)\,ds + \int_0^t R(t - s)\int_0^s g_0(u)\,du\,ds =  G_0(t) + \int_0^t R(t - s)G_0(s)\,ds.
    \]
\end{proposition}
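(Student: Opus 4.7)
The proof plan is essentially a one-line calculation using the resolvent of the second kind, so the work consists of arranging the convolutions in the right order. The starting point is \eqref{eq:integrated_intensity_2}, which I rewrite using the decomposition $N_s = Z_s + \Lambda_s$ to obtain
$$
\Lambda_t = G_0(t) + (K*Z)(t) + (K*\Lambda)(t),
$$
where I use the convention $G_0(t) = \int_0^t g_0(s)\,ds$. This is a linear Volterra equation for $\Lambda$ with forcing term $G_0 + K*Z$.

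Next, I invoke the resolvent identity $R*K = R - K$ to eliminate the implicit $K*\Lambda$ term. Concretely, I would convolve the above equation with $R$, which gives
$$
(R*\Lambda)(t) = (R*G_0)(t) + (R*K*Z)(t) + (R*K*\Lambda)(t) = (R*G_0)(t) + ((R-K)*Z)(t) + ((R-K)*\Lambda)(t),
$$
by associativity of convolution (valid since $R,K\in L^1$ and $Z, \Lambda$ are bounded on $[0,T]$). Rearranging and cancelling the $R*\Lambda$ terms yields $(K*\Lambda)(t) = (R*G_0)(t) + ((R-K)*Z)(t)$, and substituting this identity back into the Volterra equation for $\Lambda$ collapses the $K*Z$ and $-K*Z$ contributions, leaving
$$
\Lambda_t = G_0(t) + (R*G_0)(t) + (R*Z)(t) = G_0^R(t) + (R*Z)(t),
$$
which is exactly \eqref{eq:integrated_variance_res}.

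There is no genuine obstacle here; the only points requiring minor care are justifying Fubini/associativity for the iterated convolutions (trivial because $R, K \in L^1([0,T])$ and $Z, \Lambda$ are pathwise bounded on $[0,T]$ with $|Z_s| \le N_s + \Lambda_s$ both integrable), and making sure the sign convention for $R*K = R-K$ is applied consistently. The writeup is then just the display above, together with a sentence unpacking the definition of $G_0^R$.
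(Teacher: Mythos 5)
Your proposal is correct and is essentially the same argument as the paper's: both eliminate the implicit $K*\Lambda$ (equivalently $K*N$) term via the resolvent identity $R*K=R-K$, differing only in bookkeeping — you decompose $N=Z+\Lambda$ first and convolve with $R$ before substituting back, while the paper convolves $\Lambda=G_0+K*N$ with $(\mathrm{Id}+R)$ and then rearranges using $Z=N-\Lambda$. The justification of associativity/Fubini via $R,K\in L^1$ and pathwise boundedness of $N,\Lambda$ on $[0,T]$ is adequate.
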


\begin{proof}
Note that equation \eqref{eq:integrated_intensity_2} can be rewritten as a convolution:
\begin{equation*}
    \Lambda = G_0 + K * N,
\end{equation*}
where $G_0(t) = \int_0^t g_0(u)\,du$. Convolving both sides with $(\mathrm{Id} + R)$ on the left and applying the resolvent equation yields:
\begin{align*}
    \Lambda + R * \Lambda &= G_0 + R * G_0 + K * N + R * K * N \\
    &= G_0 + R * G_0 + R * N.
\end{align*}
It remains to observe that $G_0^R = G_0 + R * G_0$, which concludes the proof.
\end{proof}

The equation \eqref{eq:integrated_variance_res} suggests the following modification of the iVi scheme.

\begin{algorithm}[H]\label{algo:ivi_res}
\caption{ \textbf{- The Hawkes Resolvent iVi scheme:} Simulation of \( \widehat \Lambda\,, \widehat N \)}\label{alg:simulation_res}
\begin{algorithmic}[1]
\State \textbf{Input:} \( G_0^R, R \), uniform partition of $[0,T]$, \( t_i^n = i T/n  \), with $i=0,\ldots, n$.
\State \textbf{Output:} \(  \widehat \Lambda^n_{i, i+1}\,, \widehat N^n_{i, i+1} \) for \( i = 0, \ldots, n-1 \) and (optional) $\mathcal{T}^n = (\tau_k^n)_{k=1, \ldots, N_T^n}$.
\State Compute
\begin{align}\label{eq:kij_res}
r_{j}^n = \int_{t_j^n}^{t_{j+1}^n} R\left(s\right) \, ds, \quad {j=0,\ldots, n-1}.
\end{align}
\For{$i = 0$ to $n-1$}
\State Compute the increment:
\begin{align}\label{eq:alphai_res}
    \alpha_i^n  = \max\left\{(G_0^R(t^n_{i+1}) - G_0^R(t^n_{i})) + \sum_{j=0}^{i-1}   r^n_{i-j}\,\widehat Z^n_{j,j+1},\, 0 \right\}\,,  
\end{align}
with  $\alpha^n_0=G_0^R(t^n_{1}),$  for $i=0$\,.
    \State Simulate the increment:
\begin{align}
    \label{eq:hatNsample_res}
    \widehat N^n_{i,i+1} &\sim \mathcal P\left(\xi^n_{i}\right), \quad\text{where} \quad \xi^n_{i} \sim IG\left({\alpha_i^n},\left(\frac{\alpha_i^n}
    {r_{0}^n}\right)^2 \right).
\end{align}
    \State Compute the increments:
\begin{align}
    \widehat Z^n_{i,i+1} &= \widehat N^n_{i,i+1} - \xi^n_{i}, \\
    \widehat\Lambda^n_{i, i+1} &= \dfrac{\alpha_i^n + r_0^n \widehat N^n_{i,i+1}}{1 + r_0^n}. \label{eq:lam_inc_res}
\end{align}
\State {Optional step (jump times):} Simulate $\widehat N^n_{i,i+1}$ i.i.d. uniform random variables on $[t_i, t_{i+1})$, sort, and append them to $\mathcal{T}^n$.
\EndFor

\end{algorithmic}
\end{algorithm}

The motivation for using the resolvent version of the scheme lies again in the characteristic function of $\Lambda$. Indeed, as in Section~\ref{sect:scheme_motivation}, we have,
for $w \in \mathbb C$ with $\Re(w) \leq 0\,$, 
$$
\mathbb{E} \left[\exp\left(w\, \Lambda_{t^n_i, t^n_{i+1}}\right)\,\big|\, \mathcal{F}_{t^n_i} \right] = \exp\left(\int_{t^n_i}^{t^n_{i+1}}\,F^R(\Psi(t^n_{i+1} - s))\,dG_{t^n_i}^R(s)\right), \quad i \leq n-1,\quad n \geq 1,
$$
where
\begin{equation}\label{eq:F_R_def}
    F^R(u) = w + e^{u} - 1 - u, \quad u \in \mathbb{C},
\end{equation}
$\Psi$ is the solution, with nonpositive real part, to the Volterra equation
$$
\Psi(t) = \int_0^t R(t-s)\,F^R(\Psi(s))\,ds, \quad t \leq T,
$$
and $G_{t^n_i}^R$ is given by
\begin{equation*}
    G_{t^n_i}^R(s) = G_0^R(s) - G_0^R(t^n_i) + \int_{t^n_i}^s \int_0^{t^n_i} R(u-r)\,d(N_r - \Lambda_r), \quad s \geq t^n_i,\quad i \leq n-1,\quad n \geq 1.
\end{equation*}

Approximating $F^R$ by a quadratic function gives
\begin{equation}\label{eq:F_R_approx}
    F^R(u) = w + e^{u} - 1 - u \approx w + \frac{u^2}{2}, \quad u \in \mathbb{C}.
\end{equation}
Comparing \eqref{eq:F_approx} and \eqref{eq:F_R_approx} explains why the left-endpoint approximations
\[
\int_{t^n_i}^{t^n_{i+1}} F^R(\Psi(t^n_{i+1} - s))\,dG^R_{t^n_i}(s) \approx F^R(\Psi(\Delta t))\,G_{t^n_i}^R(t^n_{i+1}),
\]
and
\[
\Psi(\Delta t) \approx \int_0^{\Delta t} R(s)\,ds\,F^R(\Psi(\Delta t)) = r_0^n\,F^R(\Psi(\Delta t)),
\]
are more accurate than their counterparts in the iVi scheme given by \eqref{eq:FdG_approx} and \eqref{eq:Psi_approx}. Indeed, while the function $F$ defined in \eqref{eq:F_def} behaves like $F(u) \sim w + u$, the function $F^R$ is approximately quadratic: $F^R(u) \sim w + \frac{u^2}{2}$ as $u \to 0$, which increases the approximation order. We will show that this dramatically improves the speed of convergence.

The next steps of the proof are similar to those in Section~\ref{sect:scheme_motivation} and lead to Algorithm~\ref{algo:ivi_res}.

Again, as in the scheme of Algorithm~\ref{algo:ivi_hawkes}, 
the increment $\widehat{\Lambda}^n_{i,i+1}$ is computed using \eqref{eq:lam_inc_res}, 
which follows from the relation
\[
\Lambda_{t^n_{i+1}} - \Lambda_{t^n_i} 
\approx \alpha^n_i + r_0^n \bigl(\widehat{N}^n_{i,i+1} - \widehat{\Lambda}^n_{i,i+1}\bigr) 
= \widehat{\Lambda}^n_{i,i+1},
\]
arising from a discretization similar to \eqref{eq:Lam_N_relationship}, see the discussion at the end of Subsection~\ref{subsec:deriving_ig}.

% {\Dim\sout{It is worth noting that we  to compute the increment $\widehat\Lambda^n_{i,i+1}$, rather than setting $\widehat\Lambda^n_{i,i+1} = \xi_{i,i+1}^n$, as might be naturally suggested by the iVi scheme for Volterra processes introduced by \citet{jaber2025simulating}. This adjustment is due to the discrete nature of point processes: since $N^n_{i,i+1}$ must be an integer, it can no longer be directly extracted from the equation
% \[
% \Lambda_{t^n_{i+1}} - \Lambda_{t^n_i} \approx \alpha^n_i + r_0^n (\widehat N^n_{i,i+1} - \widehat \Lambda^n_{i,i+1}) = \widehat \Lambda^n_{i,i+1},
% \]
% which arises from a discretization similar to \eqref{eq:Lam_N_relationship}. Instead, after simulating $\widehat N^n_{i,i+1}$ as a Poisson random variable, $\widehat\Lambda^n_{i,i+1}$ is determined from the equation above, leading to \eqref{eq:lam_inc_res}, and ensuring the simulation of \emph{consistent} trajectories for both the jump process $N$ and its integrated intensity $\Lambda$.
% }}

Importantly, for the computation of $\alpha_i^n$ in \eqref{eq:alphai_res}, one must use 
\[
\widehat Z^n_{i,i+1} = \widehat N^n_{i,i+1} - \xi^n_{i},
\]
and not 
\[
\widehat Z^n_{i,i+1} = \widehat N^n_{i,i+1} - \widehat\Lambda^n_{i,i+1},
\]
since the latter choice significantly deteriorates the performance of the scheme.  
However, this leads to a potential loss of positivity in the discretization of \eqref{eq:alphai_res}. Although the integral $G_{t^n_i}^R(t_{i+1}^n)$ is positive, its discretization may not preserve positivity because the increments $\widehat Z^n_{i,i+1}$ are no longer guaranteed to be positive and are difficult to control. While the capping in \eqref{eq:alphai_res} does not affect the numerical results, it complicates the theoretical analysis of convergence.

Another inconvenience of the scheme is the need to compute the resolvent of the kernel $K$, which may be a challenging problem in some cases. However, when the resolvent is computable, the resolvent iVi scheme exhibits significantly better performance than the standard scheme given by Algorithm~\ref{algo:ivi_hawkes}, as follows from the numerical experiments for the fractional kernel
$$
K_H(t) = \dfrac{ct^{H - 0.5}}{\Gamma(H + 0.5)}, \quad t \geq 0,
$$
with $H = 0.1,\ c = 0.1$ and $\mu = 5$. The convergence results for the Laplace transforms
\[
\phi_{N_T}(w) = \exp\left(w N_T\right) \quad \text{and} \quad 
\phi_{\Lambda_T}(w) = \exp\left(w \Lambda_T\right),
\]
with $w = -\frac{1}{\mathbb{E}[N_T]}$ and $T = 30$, as well as the corresponding execution times, are shown in Figures~\ref{fig:cf_conv_res} and \ref{fig:sim_times_res}.  

For this example, the resolvent scheme achieves a precision of three standard deviations already with $80$ time steps. In this case, the execution time for the iVi Resolvent scheme is approximately $1100$ times faster than for the Population approach. Such a substantial difference occurs when the average number of jumps per trajectory becomes sufficiently large relative to the number of time steps needed for the iVi Resolvent scheme to reach convergence. Our experiments indicate that this is typically the case, with the iVi Resolvent scheme outperforming the exact methods across different choices of kernels and simulation horizons.

\begin{figure}[H]
    \begin{center}
    \includegraphics[width=1\linewidth]{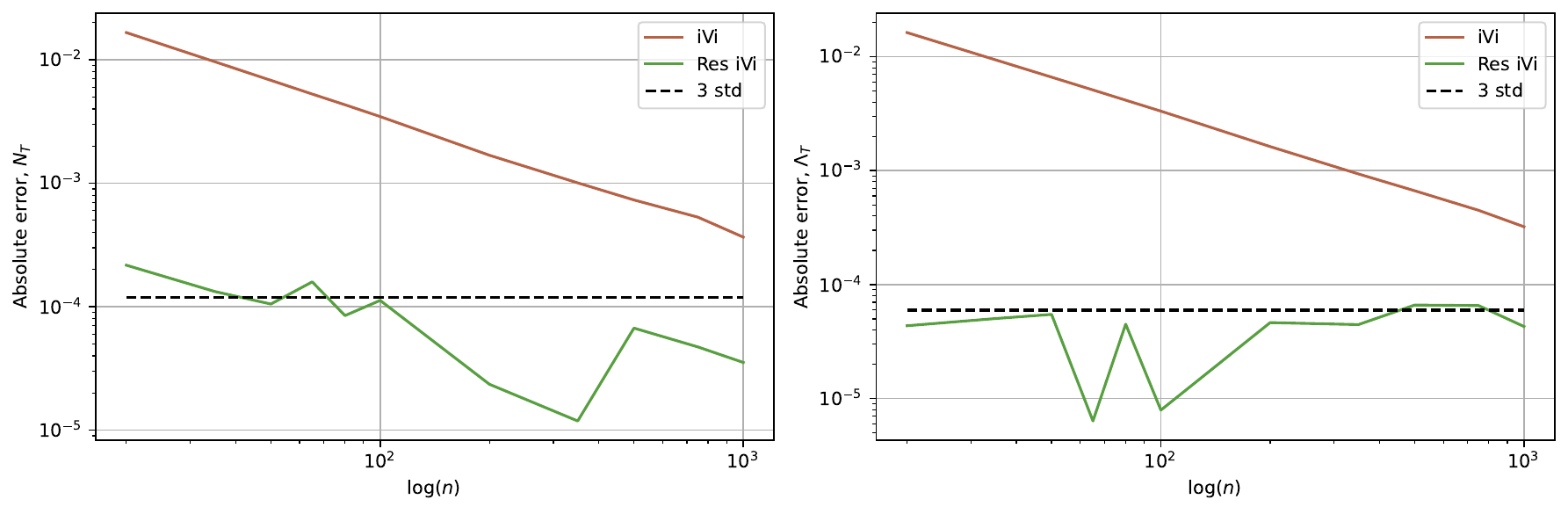}
    \caption{Convergence of the Laplace transform with $w = -\frac{1}{\mathbb{E}[N_T]}$ for $N_T$ (left) and $\Lambda_T$ (right). The $x$-axis shows the number of time steps, and the $y$-axis shows the absolute error of the Monte Carlo estimator.}
    \label{fig:cf_conv_res}
    \end{center}
\end{figure}

\begin{figure}[H]
    \begin{center}
    \includegraphics[width=1\linewidth]{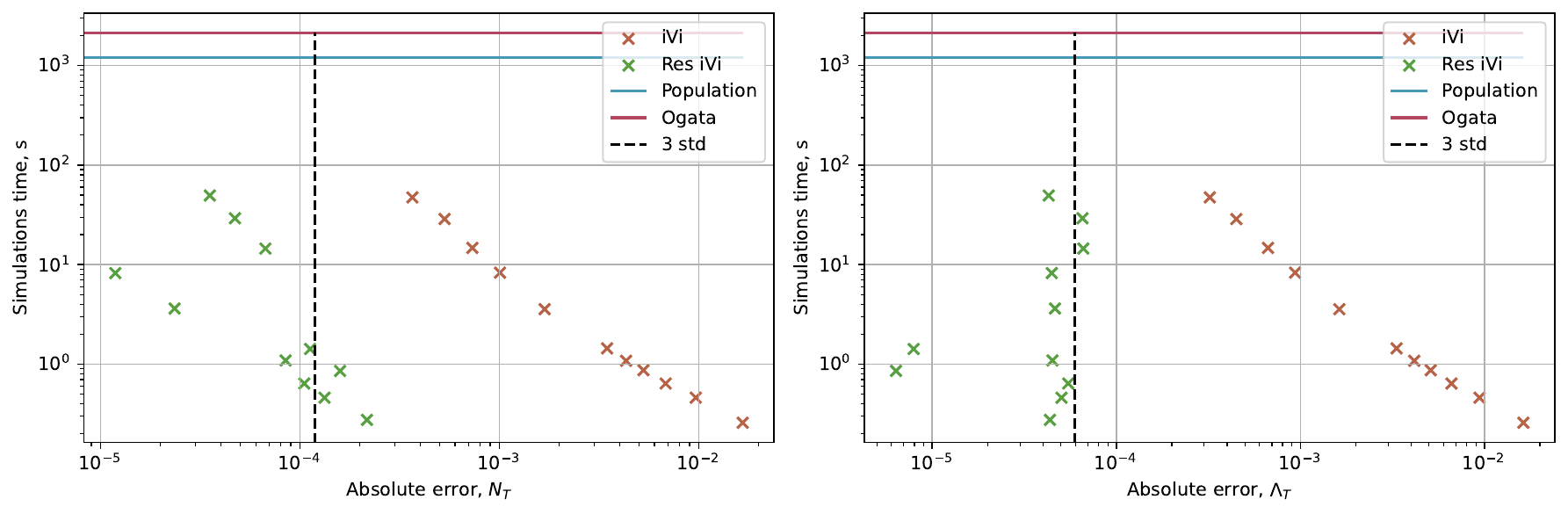}
    \caption{Simulation time for $10^5$ trajectories ($y$-axis) versus the absolute error of the Monte Carlo estimator of the Laplace transform ($x$-axis). Crosses represent estimators obtained with the iVi scheme under different discretization steps. Horizontal bars indicate the exact methods: Population (blue) and Ogata (purple).}
    \label{fig:sim_times_res}
    \end{center}
\end{figure}

\subsection{Markovian version of the iVi scheme}
\label{subsec:markov_scheme}

In this subsection, we discuss a modification of Algorithms~\ref{alg:simulation} and \ref{alg:simulation_res} that can be applied when the Hawkes process is Markovian.  

For an arbitrary kernel $K$, the complexity of the simulation scheme is quadratic due to the computation of $\alpha_i^n$ given by \eqref{eq:alphai} and \eqref{eq:alphai_res}. However, in the exponential case
\[
K_{\mathrm{exp}}^{b, c}(t) = c e^{-b t}, \quad b > 0, \ c > 0, \ t \geq 0,
\]
we have
\begin{equation}\label{eq:exp_recursive_alpha}
    k_{i+1}^n = e^{-b T / n} k_i^n, \quad i = 0, \ldots, n-1,
\end{equation}
so that
\begin{align*}
\alpha_i^n &= \int_{t_i^n}^{t_{i+1}^n} g_0(s)\, ds + \sum_{j=0}^{i-1} k_{i-j}^n \, \widehat N_{j,j+1}^n \\
&= \int_{t_i^n}^{t_{i+1}^n} g_0(s)\, ds + e^{-b T / n} \Big(\alpha_{i-1}^n - \int_{t_{i-1}^n}^{t_i^n} g_0(s)\, ds \Big) + k_1^n \widehat N_{i-1,i}^n, \quad i = 1, \ldots, n-1.
\end{align*}

This formula leads to a scheme with linear complexity, as all information about the past is summarized in the previous value $\alpha_{i-1}^n$. The same argument applies to the resolvent scheme, since the resolvent of the exponential kernel is itself an exponential kernel.

Similarly, a Markovian version of Ogata's Algorithm~\ref{alg:ogata} can be easily derived by keeping track of the value
\[
Y(t) = \sum_{\tau_i \leq t} K_{\mathrm{exp}}^{b, c}(t - \tau_i),
\]
which satisfies $Y(t+h) = e^{-b h} Y(t), \ h > 0$. Moreover, in the Markovian case, there exists an exact algorithm to simulate the next arrival $\tau_{i+1}$ conditionally on $\mathcal{F}_{\tau_i}$, see \citet{Dassios13}.

We illustrate the convergence of the Laplace transform of $N_T$ and $\Lambda_T$ for $\mu = 10$ and $T = 2$, corresponding to the exponential kernel with $b = 5$ and $c = 4$. The results for the exact scheme, the Markovian version of Ogata's algorithm, and the iVi schemes are presented in Figures~\ref{fig:cf_conv_exp} and \ref{fig:sim_times_exp}. Again, the iVi scheme achieves substantial acceleration (about 50 times faster than the exact method for the Resolvent iVi scheme with 200 time steps), even compared with the optimal exact method, thanks to time discretization and vectorization of the scheme.

\begin{figure}[H]
    \begin{center}
    \includegraphics[width=1\linewidth]{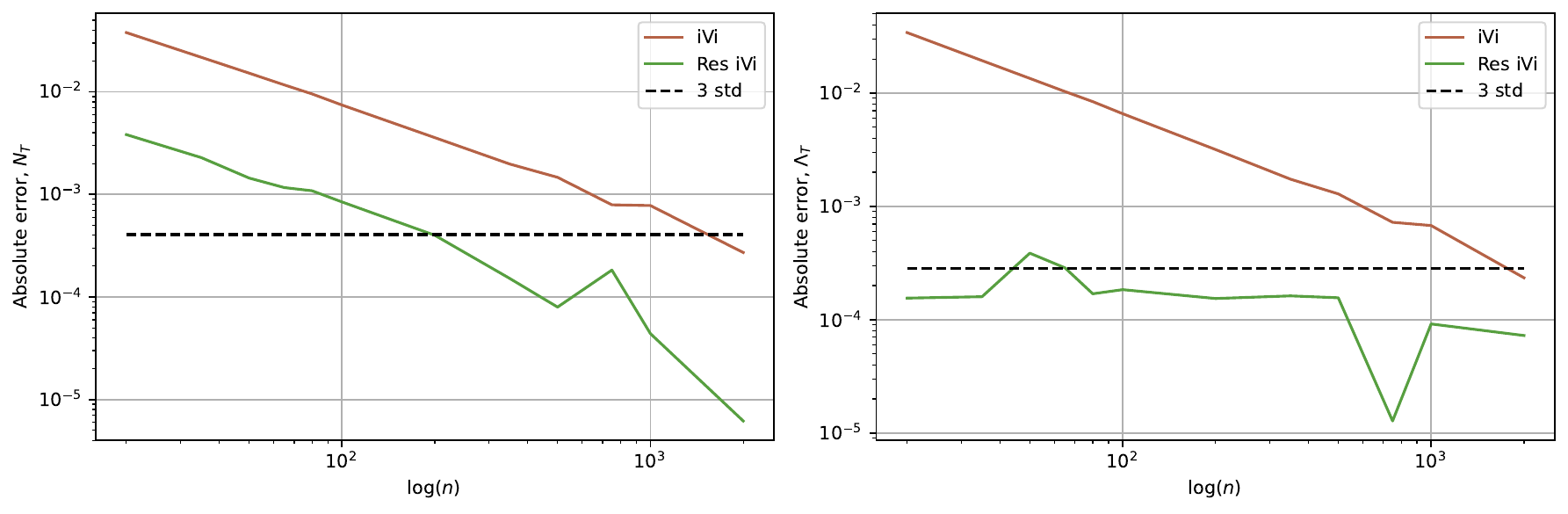}
    \caption{Convergence of the Laplace transform with $w = -\frac{1}{\mathbb{E}[N_T]}$ for $N_T$ (left) and $\Lambda_T$ (right). The $x$-axis shows the number of time steps, and the $y$-axis shows the absolute error of the Monte Carlo estimator.}    \label{fig:cf_conv_exp}
    \end{center}
\end{figure}

\begin{figure}[H]
    \begin{center}
    \includegraphics[width=1\linewidth]{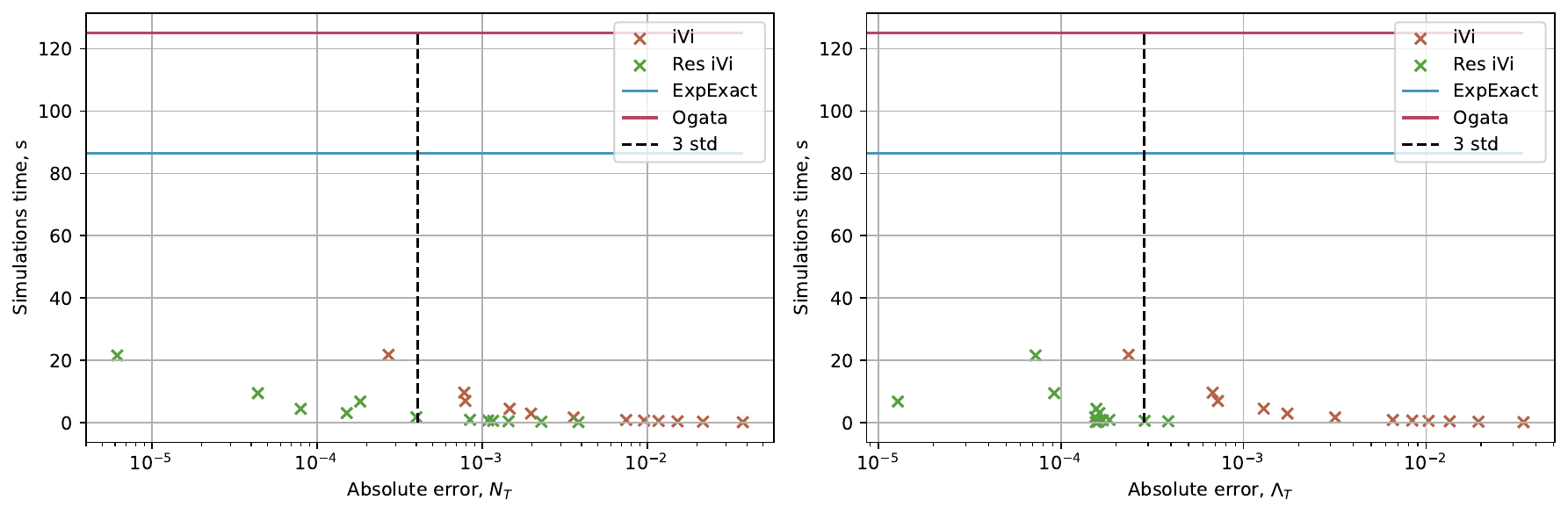}
    \caption{Simulation time for $10^5$ trajectories ($y$-axis) versus the absolute error of the Monte Carlo estimator of the Laplace transform ($x$-axis). Crosses represent estimators obtained with the iVi scheme under different discretization steps. Horizontal bars indicate the exact methods: Population (blue) and Ogata (purple).}
    \label{fig:sim_times_exp}
    \end{center}
\end{figure}

\begin{remark}
A similar reasoning applies when the kernel is given by a sum of exponentials:
\begin{align}\label{eq:sumexpkernel}
K(t) = \sum_{k=1}^m c_k e^{-b_k t} = \sum_{k=1}^m K_{\mathrm{exp}}^{b_k, c_k}(t), \quad b_k > 0, \ c_k > 0, \ t \geq 0.    
\end{align}
In this case, we can decompose $k_i^n$ as
\[
k_i^n = \sum_{k=1}^m k_i^{n,(k)}, \quad i = 0, \ldots, n-1,
\]
where $k_i^{n,(k)}$ corresponds to the kernel $K_{\mathrm{exp}}^{b_k, c_k}$ and satisfies the recursion \eqref{eq:exp_recursive_alpha}.  

The Markovian state is then given by the vector ${y_i} = (y_{i,1}, \ldots, y_{i,m})$, with
\[
y_{i,k} = \sum_{j=0}^{i-1} k_{i-j}^{n,(k)} \, \widehat N_{j,j+1}^n,
\]
which evolves according to
\[
y_{i+1,k} = e^{-b_k T / n} y_{i,k} + k_1^{n,(k)} \widehat N_{i,i+1}^n,
\]
and the increment $\alpha_i^n$ is then given by
\[
\alpha_i^n = \int_{t_i^n}^{t_{i+1}^n} g_0(s)\, ds + \sum_{k=1}^m y_{i,k}.
\]
Sum of exponential kernels as in \eqref{eq:sumexpkernel} can be used to approximate other kernels $K$, giving rise to so-called multifactor Markovian approximations of the Hawkes process with kernel $K$; see, for instance, \cite*[Section 7.2]{abietal2021weak}. This approach yields a  computational complexity of order $\mathcal{O}(mn)$; thus, when the number of factors $m$ is smaller than the number of time steps $n$, it offers a significant computational advantage.
\end{remark}

\section{Proof of weak convergence}
\label{sect:convergence_proof}
This section is dedicated to the proof of convergence of the Hawkes iVi scheme, as stated in Theorem \ref{theorem:convergence}.
\subsection{Preliminary results}
We begin with some basic properties on the moments of the random variables $\left(\alpha\,,\xi\,,\widehat N\,, \widehat \Lambda\right)$ produced by Algorithm \ref{alg:simulation}. We recall that the latter leads to the following structure at step $i \leq n-1\,$:
    \begin{itemize}
        \item The quantity $\alpha^n_i$ is given by \eqref{eq:alphai}.
        \item Conditional on $\alpha^n_i\,$, $\xi^n_i$ follows an Inverse Gaussian distribution with parameters $\left(\frac{\alpha^n_i}{1 - k_0^n}\,, \left(\frac{\alpha^n_i}{k_0^n} \right)^2 \right)\,$.
        \item Conditional on $\xi^n_i\,$, $\widehat N^n_{i,i+1}$ follows a Poisson distribution with parameter $\xi^n_i\,$.
        \item The increment $\widehat \Lambda^n_{i,i+1}$ is defined as $\alpha^n_i + k_0^n\,\widehat N^n_{i,i+1}\,$.
    \end{itemize}
    The filtrations $\left(\mathcal F^n_t\right)_{t \leq T, n \geq 1}$ are defined in \eqref{eq:def_filtration}.
\begin{lemma}    
\label{lemma:estimates_random_variables}
Let $n \geq 1\,$. Then, the random variables $\left(\alpha^n_i\right)_{i \leq n-1}\,$, $\left(\xi^n_i\right)_{i \leq n-1}\,$, $\left(\widehat N^n_{i,i+1}\right)_{i \leq n-1}$ and $\left(\widehat \Lambda^n_{i,i+1}\right)_{i \leq n-1}$ have finite moments of order 2. Moreover, we have
    \begin{equation}
    \label{eq:exp_N_hat}
    \mathbb E \left[\widehat N^n_{i,i+1}\,\middle|\,\mathcal F^n_{t^n_j} \right] =  \mathbb E \left[\widehat \Lambda^n_{i,i+1} \,\middle|\,\mathcal F^n_{t^n_j}\right]\,, \quad j \leq i\leq n-1\,.
    \end{equation}
    \begin{equation}
        \label{eq:exp_Z_hat_squared}
        \mathbb E \left[\left(\widehat N^n_{i,i+1} - \widehat \Lambda^n_{i,i+1}\right)^2\,\middle|\,\mathcal F^n_{t^n_j} \right] = \left(\left(k_0^n\right)^2 + \left(1 - k_0^n\right)^2\right)\, \mathbb E \left[ \widehat \Lambda^n_{i,i+1}\,\middle|\,\mathcal F^n_{t^n_j} \right]\,, \quad j \leq  i \leq n-1\,.
    \end{equation}
\end{lemma}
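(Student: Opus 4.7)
The plan is to work recursively through the conditional structure of the scheme, since at step $i$ the random vector $(\alpha^n_i,\xi^n_i,\widehat N^n_{i,i+1},\widehat\Lambda^n_{i,i+1})$ is built as a three-level hierarchical sample: $\alpha^n_i$ is $\mathcal F^n_{t^n_i}$-measurable, $\xi^n_i\mid\alpha^n_i$ is Inverse Gaussian, $\widehat N^n_{i,i+1}\mid\xi^n_i$ is Poisson, and $\widehat\Lambda^n_{i,i+1}$ is an explicit affine function of $\widehat N^n_{i,i+1}$. I will first establish finite second moments by induction on $i$, and then derive \eqref{eq:exp_N_hat}-\eqref{eq:exp_Z_hat_squared} by iterated conditioning, using only the first two moments of the Inverse Gaussian and Poisson laws recalled in Appendix~\ref{App:IG}.

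For the moment bound, the base case is trivial since $\alpha^n_0=\int_0^{t_1^n} g_0(s)\,ds$ is deterministic. For the induction step, I will assume $\mathbb E[(\alpha^n_i)^2]<\infty$. From the formulas $\mathbb E[\xi^n_i\mid\alpha^n_i]=\alpha^n_i/(1-k_0^n)$ and $\mathrm{Var}(\xi^n_i\mid\alpha^n_i)=\alpha^n_i (k_0^n)^2/(1-k_0^n)^3$ (Inverse Gaussian), a direct computation gives $\mathbb E[(\xi^n_i)^2]<\infty$. Then, using the Poisson identities $\mathbb E[\widehat N^n_{i,i+1}\mid\xi^n_i]=\xi^n_i$ and $\mathrm{Var}(\widehat N^n_{i,i+1}\mid\xi^n_i)=\xi^n_i$, together with the law of total variance, yields $\mathbb E[(\widehat N^n_{i,i+1})^2]<\infty$, and then $\widehat\Lambda^n_{i,i+1}=\alpha^n_i+k_0^n\widehat N^n_{i,i+1}$ inherits a finite second moment. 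Finally, \eqref{eq:alphai} expresses $\alpha^n_{i+1}$ as a finite linear combination of previously produced increments with deterministic coefficients, closing the induction.

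For \eqref{eq:exp_N_hat}, the tower property applied along the hierarchy $\alpha^n_i\to\xi^n_i\to\widehat N^n_{i,i+1}$ gives
\begin{equation*}
\mathbb E\bigl[\widehat N^n_{i,i+1}\bigm|\mathcal F^n_{t^n_i}\bigr]=\mathbb E\bigl[\xi^n_i\bigm|\mathcal F^n_{t^n_i}\bigr]=\frac{\alpha^n_i}{1-k_0^n},
\end{equation*}
and substituting into $\widehat\Lambda^n_{i,i+1}=\alpha^n_i+k_0^n\widehat N^n_{i,i+1}$ yields the same value, so the two conditional expectations agree at level $j=i$; extending to $j<i$ is then immediate by one more application of the tower property. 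For \eqref{eq:exp_Z_hat_squared}, the identity $\widehat N^n_{i,i+1}-\widehat\Lambda^n_{i,i+1}=(1-k_0^n)\widehat N^n_{i,i+1}-\alpha^n_i$ reduces the problem to computing $\mathbb E[(\widehat N^n_{i,i+1})^2\mid\mathcal F^n_{t^n_i}]$, which I will obtain from the law of total variance as
\begin{equation*}
\mathrm{Var}\bigl(\widehat N^n_{i,i+1}\bigm|\mathcal F^n_{t^n_i}\bigr)=\mathbb E\bigl[\xi^n_i\bigm|\mathcal F^n_{t^n_i}\bigr]+\mathrm{Var}\bigl(\xi^n_i\bigm|\mathcal F^n_{t^n_i}\bigr)=\frac{\alpha^n_i}{1-k_0^n}+\frac{\alpha^n_i(k_0^n)^2}{(1-k_0^n)^3}.
\end{equation*}
A short algebraic simplification then produces exactly the factor $(k_0^n)^2+(1-k_0^n)^2$ multiplying $\alpha^n_i/(1-k_0^n)=\mathbb E[\widehat\Lambda^n_{i,i+1}\mid\mathcal F^n_{t^n_i}]$; conditioning on a coarser $\mathcal F^n_{t^n_j}$ follows again by the tower property.

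I do not foresee a major obstacle here; the statement is essentially a bookkeeping exercise on hierarchical Inverse Gaussian-Poisson sampling. The only point requiring some care is to keep track of the precise measurability of $\alpha^n_i$ and $\xi^n_i$ with respect to the filtration $\mathcal F^n_{t^n_j}$ defined in \eqref{eq:def_filtration}: $\alpha^n_i$ is $\mathcal F^n_{t^n_i}$-measurable by \eqref{eq:alphai}, whereas $\xi^n_i$ is not measurable with respect to any $\mathcal F^n_{t^n_k}$ (it is an auxiliary latent variable used only to define the Poisson parameter), so one must condition through $\xi^n_i$ via the tower property rather than directly fold it into the filtration.
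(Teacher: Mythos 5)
Your proposal is correct and follows essentially the same route as the paper: induction on $i$ for the second moments, then iterated conditioning through the hierarchy $\alpha^n_i \to \xi^n_i \to \widehat N^n_{i,i+1}$ using the first two moments of the Inverse Gaussian and Poisson laws. The only cosmetic difference is that you organize the second-moment computation via the law of total variance and the identity $\widehat N^n_{i,i+1}-\widehat\Lambda^n_{i,i+1}=(1-k_0^n)\widehat N^n_{i,i+1}-\alpha^n_i$ at level $j=i$ before coarsening by the tower property, whereas the paper expands the square directly under $\mathcal F^n_{t^n_j}$; both yield the factor $(k_0^n)^2+(1-k_0^n)^2$.
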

\begin{proof}
    We prove the existence of second order moments by induction on $i \leq n-1\,$. Firstly, $\widehat \alpha^n_0 = \int_0^{\frac T n}\, g_0(s)\,ds$ is deterministic. Moreover $ \xi^n_0 \sim IG\left(\frac{\alpha^n_0}{1 - k_0^n}\,, \left( \frac{\alpha^n_0}{ k_0^n}\right)^2\right)$ has finite moments of any order (see Appendix \ref{App:IG}). Then, we have
    $$
    \mathbb E \left[\left(\widehat N^n_{0,1}\right)^2 \right] = \mathbb E \left[\mathbb E \left[\left(\widehat N^n_{0,1}\right)^2\,\middle|\,\xi^n_0 \right] \right] = \mathbb E \left[\xi^n_0 + \left(\xi^n_0\right)^2\right] < + \infty \,,
    $$
    since $\widehat N^n_{0,1}$ follows a Poisson distribution with parameter $\xi^n_0\,$, conditional on the latter. Finally, by linear combination,
    $$
    \widehat \Lambda^n_{0,1} = \alpha^n_0 + k_0^n\,\widehat N^n_{0,1}
    $$
    also admits a finite moment of order 2.

    We next fix $i \leq n-1$ and assume that $\left(\alpha^n_j\right)_{j < i}\,$, $\left(\xi^n_j\right)_{j < i}\,$, $\left(\widehat N^n_{j,j+1}\right)_{j < i}$ and $\left(\widehat \Lambda^n_{j,j+1}\right)_{j <i}$ all have finite moments of order $2$. Then it is also the case for 
    $$
    \alpha^n_i = \int_{t^n_i}^{t^n_{i+1}}\,g_0(s)\,ds + \sum_{j = 0}^{i-1}\, k^n_{i-j}\, \widehat N^n_{j,j+1}\,.
    $$
    Using the formula \eqref{eq:IG_moments} for the moments of the Inverse Gaussian distribution, we obtain
    $$
    \mathbb E \left[\left(\xi^n_i\right)^2 \right] = \mathbb E \left[\mathbb E \left[\left(\xi^n_i\right)^2\,\middle|\,\alpha^n_i \right] \right] = \mathbb E \left[\frac{\left(k_0^n\right)^2}{\left(1 - k_0^n\right)^3}\,\alpha^n_i + \left(\frac{\alpha^n_i}{1 - k_0^n} \right)^2 \right] <+ \infty\,.
    $$
    Finally, similarly to the case $i = 0\,$, $\widehat N^n_{i,i+1}$ and $\widehat \Lambda^n_{i,i+1}$ have finite second-order moments.

    For \eqref{eq:exp_N_hat}, we use the tower property of conditional expectation,
    \begin{align}
    \label{eq:conditional_exp_N_alpha}
        \mathbb E \left[\widehat N^n_{i,i+1}\,\middle|\,\mathcal F^n_{t^n_j} \right] &= \mathbb E \left[\mathbb E \left[\widehat N^n_{i,i+1}\,\middle|\,\mathcal F^n_{t^n_j}\,, \xi^n_i \right]\,|\,\mathcal F^n_{t^n_j} \right] \nonumber\\
        &= \mathbb E \left[\xi^n_i\,\middle|\,\mathcal F^n_{t^n_j}\right] \nonumber\\
        &= \mathbb E \left[ \mathbb E \left[ \xi^n_i\,\middle|\,\mathcal F^n_{t^n_j}\,, \alpha^n_i\right]\,\middle|\,\mathcal F^n_{t^n_j}\right] \nonumber\\
        &= \frac{1}{1 - k_0^n}\,\mathbb E \left[\alpha^n_i\,\middle|\,\mathcal F^n_{t^n_j} \right]\,,\quad j \leq i \leq n-1\,.
    \end{align}
    Inserting this into \eqref{eq:hatLambdasample}, we obtain
    \begin{align*}
        \mathbb E \left[\widehat \Lambda^n_{i,i+1}\,\middle |\,\mathcal F^n_{t^n_j} \right] &= \mathbb E \left[\alpha^n_i\,\left(1 + \frac{k_0^n}{1 - k_0^n} \right)\,\middle |\, \mathcal F^n_{t^n_j} \right] \\
        &= \frac{1}{1 - k_0^n}\, \mathbb E \left[\alpha^n_i\,\middle |\,\mathcal F^n_{t^n_j} \right] \\
        &= \mathbb E \left[\widehat N^n_{i,i+1}\,\middle |\,\mathcal F^n_{t^n_j} \right]\,, \quad j \leq i \leq n-1\,,
    \end{align*}
    thus proving \eqref{eq:exp_N_hat}.

We now move to the second equality. Using \eqref{eq:hatLambdasample}, we have
\begin{align*}
    \mathbb E \left[\left(\widehat N^n_{i,i+1} - \widehat \Lambda^n_{i,i+1} \right)^2\,\middle | \, \mathcal{F}^n_{t^n_j} \right] &= \mathbb E \left[ \left(\left(1 - k_0^n\right)\, \widehat N^n_{i,i+1} - \alpha^n_i\right)^2\,\middle |\, \mathcal F^n_{t^n_j}\right]\,, \quad j \leq i \leq n-1\,.
\end{align*}
An application of the tower property of conditional expectation, along with the expression of second-order moments of Poisson and Inverse Gaussian (see \eqref{eq:IG_moments}) variables, yields that
\begin{align*}
\mathbb E \left[\left(\widehat N^n_{i,i+1} - \Lambda^n_{i,i+1} \right)^2\,\middle |\,\mathcal F^n_{t^n_j} \right]    &=\mathbb E \left[\mathbb E \left[\left(1 - k_0^n\right)^2\,\left(\widehat N^n_{i,i+1}\right)^2 - 2\,\left(1 - k_0^n\right)\,\widehat N^n_{i,i+1}\,\alpha^n_i + \left(\alpha^n_i\right)^2\,\middle |\,\mathcal F^n_{t^n_j}\,,\xi^n_i\,,\alpha^n_i\right] \,\middle |\, \mathcal F^n_{t^n_j}\right] \\
    &= \mathbb E \left[\left(1 - k_0^n\right)^2\,\left(\left(\xi^n_i\right)^2 + \xi^n_i\right) - 2\,\left(1 - k_0^n\right)\, \xi^n_i\, \alpha^n_i + \left(\alpha^n_i\right)^2\,\middle |\,\mathcal F^n_{t^n_j} \right]\\
    &= \mathbb E \left[\mathbb E \left[\left(1 - k_0^n\right)^2\,\left(\left(\xi^n_i\right)^2 + \xi^n_i\right) - 2\,\left(1 - k_0^n\right)\, \xi^n_i\, \alpha^n_i + \left(\alpha^n_i\right)^2\,\middle |\,\mathcal F^n_{t^n_j}\,,\alpha^n_i\right]\,\middle |\,\mathcal F^n_{t^n_j} \right]\\
    &= \mathbb E \left[\frac{\left(k_0^n\right)^2}{1 - k_0^n}\,\alpha^n_i + \left(\alpha^n_i\right)^2 + \left(1 - k_0^n\right)\,\alpha^n_i - 2\,\left(\alpha^n_i\right)^2 + \left(\alpha^n_i\right)^2\,\middle |\,\mathcal F^n_{t^n_j} \right] \\
    &= \frac{\left(k_0^n\right)^2 + \left(1 - k_0^n\right)^2}{1 - k_0^n}\,\mathbb E \left[\alpha^n_i\,\middle |\, \mathcal F^n_{t^n_j} \right]\,, \quad j \leq i \leq n-1\,,
\end{align*}
which combined with \eqref{eq:exp_N_hat} and \eqref{eq:conditional_exp_N_alpha} gives \eqref{eq:exp_Z_hat_squared}.
\end{proof}

The following lemma identifies a martingale that will be pivotal in the proofs.
\begin{lemma}
    \label{lemma:martingales}
    For $n \geq 1\,$, the process $\left(Z^n\right)_{t \leq T} := \left(N^n_t - \Lambda^n_t\right)_{t \leq T}$ is a square-integrable $\left(\mathcal F^n_t\right)_{t \leq T}$-martingale. Its quadratic variation is given by
    $$
    \left[ Z^n\right]_t = \sum_{i = 0}^{\lfloor nt/T\rfloor - 1}\,\left( \widehat N^n_{i,i+1} - \widehat \Lambda^n_{i,i+1}\right)^2\,, \quad t \leq T\,.
    $$
    Moreover, $\left([Z^n] - \left[\left(k_0^n\right)^2 + \left(1 - k_0^n\right)^2\right]\, \Lambda^n\right)$ is a martingale, which implies
    \begin{equation}
        \label{eq:exp_quadratic_var_Z}
        \mathbb E \left[\left[Z^n\right]_t \right] = \left(\left(k_0^n\right)^2 + \left(1 - k_0^n\right)^2\right)\, \mathbb E \left[ \Lambda^n_t\right]\,, \quad t \leq T\,.
    \end{equation}
\end{lemma}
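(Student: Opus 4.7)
The plan is to exploit the fact that both $N^n$ and $\Lambda^n$ are piecewise constant càdlàg processes adapted to $(\mathcal{F}^n_t)$, with jumps confined to the grid points $t^n_{i+1}$. This reduces every continuous-time statement in the lemma to a one-step conditional identity on the grid, which is supplied directly by Lemma~\ref{lemma:estimates_random_variables}. Concretely, I would first observe that $\widehat{\Lambda}^n_{i,i+1} = \alpha^n_i + k_0^n\,\widehat{N}^n_{i,i+1}$ is $\mathcal{F}^n_{t^n_{i+1}}$-measurable, so $Z^n$ is adapted, and square-integrability of $Z^n_t$ follows from the $L^2$-bounds in Lemma~\ref{lemma:estimates_random_variables} since $Z^n_t$ is a finite sum of at most $n$ increments.

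\textbf{Martingale property of $Z^n$.} Since $\mathcal{F}^n_t = \mathcal{F}^n_{t^n_i}$ for $t \in [t^n_i, t^n_{i+1})$, the identity $\mathbb{E}[Z^n_t \mid \mathcal{F}^n_s] = Z^n_s$ for $s \leq t$ reduces to checking the one-step condition
\[
\mathbb{E}\bigl[\widehat{N}^n_{i,i+1} - \widehat{\Lambda}^n_{i,i+1} \,\bigm|\, \mathcal{F}^n_{t^n_i}\bigr] = 0, \quad 0 \leq i \leq n-1,
\]
which is exactly the $j = i$ case of \eqref{eq:exp_N_hat}.

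\textbf{Quadratic variation and its compensator.} Since $Z^n$ is a piecewise constant pure-jump process with jumps $\Delta Z^n_{t^n_{i+1}} = \widehat{N}^n_{i,i+1} - \widehat{\Lambda}^n_{i,i+1}$, its quadratic variation is the sum of squared jumps, giving immediately the stated formula for $[Z^n]_t$. For the compensation claim, I would apply the same grid-level argument to the piecewise constant adapted process $[Z^n] - [(k_0^n)^2 + (1-k_0^n)^2]\Lambda^n$: its one-step increment on $(t^n_i, t^n_{i+1}]$ equals
\[
(\widehat{N}^n_{i,i+1} - \widehat{\Lambda}^n_{i,i+1})^2 - \bigl[(k_0^n)^2 + (1-k_0^n)^2\bigr]\,\widehat{\Lambda}^n_{i,i+1},
\]
which is mean-zero conditional on $\mathcal{F}^n_{t^n_i}$ by the $j=i$ case of \eqref{eq:exp_Z_hat_squared}. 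Taking unconditional expectation and using $[Z^n]_0 = \Lambda^n_0 = 0$ then yields \eqref{eq:exp_quadratic_var_Z}.

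\textbf{Main difficulty.} There is no substantive obstacle here: the lemma is essentially a translation of the conditional-moment identities of Lemma~\ref{lemma:estimates_random_variables} into the continuous-time filtration language, once one records that $Z^n$ is pure-jump with jumps only at the grid. The only point requiring care is the bookkeeping of measurability, namely that $\Lambda^n$ is adapted but \emph{not} predictable (its jump at $t^n_{i+1}$ depends on $\widehat{N}^n_{i,i+1}$), which is exactly why the conditioning in Lemma~\ref{lemma:estimates_random_variables} is performed on $\mathcal{F}^n_{t^n_i}$ rather than $\mathcal{F}^n_{t^n_{i+1}-}$ and why the compensator of $[Z^n]$ involves the factor $(k_0^n)^2 + (1-k_0^n)^2$ instead of $1$.
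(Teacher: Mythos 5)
Your proposal is correct and follows essentially the same route as the paper: square-integrability and the one-step conditional identities of Lemma~\ref{lemma:estimates_random_variables} give the martingale property of $Z^n$, the piecewise-constant finite-variation structure identifies $[Z^n]$ as the sum of squared jumps, and \eqref{eq:exp_Z_hat_squared} compensates $[Z^n]$ by $\bigl[(k_0^n)^2 + (1-k_0^n)^2\bigr]\Lambda^n$. The only cosmetic difference is that you reduce to single-grid-interval increments and invoke the tower property, whereas the paper applies the $j \leq i$ versions of \eqref{eq:exp_N_hat} and \eqref{eq:exp_Z_hat_squared} directly to increments over several grid points, and it cites \citet[Theorem~II.26]{protter2005stochastic} to justify the pure-jump quadratic variation formula.
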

\begin{proof}
    We recall that square-integrability of all the quantities is provided by Lemma \ref{lemma:estimates_random_variables}. Let $s \leq t \leq T\,$, we define $i := \lfloor nt/T\rfloor$ and $j := \lfloor ns/T \rfloor\,$. We have 
    \begin{align*}
        \mathbb E \left[N^n_t \,\middle|\, \mathcal F^n_s\right] &= \mathbb E \left[\sum_{l = 0}^{i - 1}\, \widehat N^n_{l,l+1}\, \middle|\, \mathcal F^n_{t^n_j} \right] \\
        &= N^n_s + \sum_{l = j}^{i-1}\, \mathbb E \left[ \widehat N^n_{l,l+1}\,\middle|\, \mathcal F^n_{t^n_j}\right] \\
        &= N^n_s + \sum_{l = j}^{i-1}\, \mathbb E \left[ \widehat \Lambda^n_{l,l+1}\,\middle|\, \mathcal F^n_{t^n_j}\right] \\
        &= N^n_s + \mathbb{E}\left[\Lambda^n_t - \Lambda^n_s\,\middle|\, \mathcal F^n_s\right] \,,
    \end{align*}
where we used \eqref{eq:exp_N_hat} to compute the conditional expectations.
This proves the martingality of $Z^n = N^n - \Lambda^n\,$. To identify its quadratic variation, we notice that it has bounded variation on $[0\,,T]$ (as it jumps a finite number of times) and is therefore a quadratic pure-jump martingale, see \citet[Theorem~II.26]{protter2005stochastic}. Since it also starts from 0, we deduce that its quadratic variation is the sum of the squares of its jumps, i.e.
$$
\left[Z^n\right]_t = \left[N^n - \Lambda^n  \right]_t = \sum_{i = 0}^{\lfloor nt/T \rfloor - 1}\, \left(\widehat N^n_{i,i+1} -  \widehat \Lambda^n_{i,i+1}\right)^2\,, \quad t \leq T\,.
$$
By taking the expectation, applying \eqref{eq:exp_Z_hat_squared} and recalling the definition of $\Lambda^n$ in \eqref{eq:def_process_Lambda}, we obtain 
\begin{align*}
    \mathbb E \left[[Z^n]_t - [Z^n]_s\,\middle |\, \mathcal F^n_s \right] &= \sum_{l = j}^{i - 1}\, \mathbb E \left[\left(\widehat N^n_{l,l+1} - \widehat \Lambda^n_{l,l+1}\right)^2\,\middle |\,\mathcal F^n_{t^n_j} \right] \\
    &= \left(\left(k_0^n\right)^2 + \left(1 - k_0^n\right)^2\right)\, \mathbb E \left[\Lambda^n_t - \Lambda^n_s\,\middle |\, \mathcal F^n_s\right] \,,
\end{align*}
proving that $\left([Z^n] - \left[\left(k_0^n\right)^2 + \left(1 - k_0^n\right)^2\right]\, \Lambda^n\right)$ is a martingale.
\end{proof}

A very convenient feature of our scheme is that it can be rewritten as a stochastic Volterra equation with a measure-valued kernel, which will be useful for deriving estimates on our processes. This is formulated in the following lemma.
\begin{lemma}
    \label{lemma:Volterra}
    For $n \geq 1\,$, define the nonnegative discrete measure $K^n := \sum_{i = 0}^{n-1}\, k^n_{i}\, \delta_{t^n_i}$ on $\left([0\,,T]\,, \mathcal B_{[0\,,T]}\right)\,$. Then, we have 
    \begin{equation}
    \label{eq:volterra_equation}
    \Lambda^n_t = \int_0^{\lfloor \frac{nt}{T}\rfloor \frac{T}{n}}\,g_0(s)\,ds + \int_{[0,t]}\, N^n_{t-s}\,K^n(ds)\,, \quad t \leq T\,.
    \end{equation}
\end{lemma}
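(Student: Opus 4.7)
The plan is to prove the identity by direct algebraic expansion, matching both sides as iterated sums over the grid, so that the claimed Volterra relation becomes a Fubini-type rearrangement.

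First, I would fix $t \in [0,T]$ and set $m := \lfloor nt/T \rfloor$. Starting from the definition \eqref{eq:def_process_Lambda}, I would expand $\Lambda^n_t = \sum_{i=0}^{m-1}\widehat\Lambda^n_{i,i+1}$ using \eqref{eq:hatLambdasample} together with \eqref{eq:alphai}, absorbing the $k_0^n\,\widehat N^n_{i,i+1}$ term into the $j=i$ entry of the inner sum, so that
$$
\Lambda^n_t = \int_0^{mT/n} g_0(s)\,ds + \sum_{i=0}^{m-1}\sum_{j=0}^{i} k^n_{i-j}\,\widehat N^n_{j,j+1}.
$$
Swapping the order of summation via the substitution $l = i-j$ then produces the clean double sum $\sum_{j=0}^{m-1}\widehat N^n_{j,j+1}\sum_{l=0}^{m-1-j} k^n_l$, which will be the target expression.

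Next, I would process the right-hand side. Since $K^n$ is purely atomic with atoms at the grid points $\left(t^n_i\right)_{0\leq i \leq n-1}$, the integral against $K^n$ reduces to the finite sum $\sum_{i:\,t^n_i \leq t} k^n_i\,N^n_{t - t^n_i}$, with the contributing index set being exactly $\{0,1,\ldots,m\}$. The only slightly delicate step is evaluating $N^n_{t-t^n_i}$ for these indices: writing $t = mT/n + \varepsilon$ with $\varepsilon \in [0,T/n)$, I would verify that $\lfloor n(t-t^n_i)/T\rfloor = m-i$ for every $i \leq m$, which yields $N^n_{t-t^n_i} = \sum_{j=0}^{m-i-1}\widehat N^n_{j,j+1}$, and in particular the boundary term $N^n_{t-t^n_m} = N^n_\varepsilon = 0$. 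After substituting and swapping the order of summation once more, the right-hand side becomes exactly the double sum obtained for $\Lambda^n_t$, while the deterministic term on the right is by construction $\int_0^{mT/n}g_0(s)\,ds$.

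The main obstacle, if any, is purely notational: keeping track of the floor function, of the ranges of summation, and of the behaviour of the càdlàg piecewise constant process $N^n$ at non-grid times $t \in [t^n_m, t^n_{m+1})$. No analytical input is required beyond the definitions of the scheme, so I expect the proof to be a clean book-keeping computation.
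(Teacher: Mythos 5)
Your proposal is correct and follows essentially the same route as the paper's proof: expand $\Lambda^n_t$ via \eqref{eq:hatLambdasample} and \eqref{eq:alphai}, absorb the $k_0^n\,\widehat N^n_{i,i+1}$ term, swap the order of summation, and identify the resulting sum with the integral against the atomic measure $K^n$ using the piecewise constancy of $N^n$ on the grid intervals. The only difference is cosmetic (you meet in the middle by also expanding the right-hand side, and you spell out the floor-function bookkeeping that the paper handles via the identity $N^n_{t^n_{i-l}} = N^n_{t-t^n_l}$), so no further comment is needed.
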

\begin{proof}
We recall that combining \eqref{eq:alphai} and \eqref{eq:hatLambdasample} leads to 
$$
\widehat \Lambda^n_{i,i+1} = \int_{t^n_i}^{t^n_{i+1}}\,g_0(s)\,ds + \sum_{j = 0}^i\,k^n_{i-j}\,\widehat N^n_{j,j+1}\,, \quad i \leq n-1\,, \quad n \geq 1\,.
$$
    Let $t \leq T$ and set $i := \lfloor nt /T \rfloor\,$, then 
    \begin{align*}
        \Lambda^n_t &= \sum_{j = 0}^{i - 1}\, \widehat \Lambda^n_{j,j+1} \\
        &= \int_0^{t^n_i}\,g_0(s)\,ds + \sum_{j = 0}^{i-1}\, \sum_{l = 0}^{j}\,k^n_{j -l}\,\widehat N^n_{l,l+1} \\
        &= \int_0^{t^n_i}\,g_0(s)\,ds + \sum_{j = 0}^{i-1}\, \sum_{l = 0}^{j}\,k^n_l \,\widehat N^n_{j-l, j-l+1} \\
        &= \int_0^{t^n_i}\,g_0(s)\,ds + \sum_{l = 0}^{i-1}\,k^n_l\,\sum_{j = l}^{i-1}\, \widehat N^n_{j-l, j-l+1} \\
        &= \int_0^{t^n_i}\,g_0(s)\,ds + \sum_{l = 0}^{i-1}\,k^n_l \, N^n_{t^n_{i-l}} \,.
    \end{align*}
From the definition of $N^n$ in \eqref{eq:def_process_N}, and the fact that our partition of $[0\,,T]$ is uniform, we have $N^n_{t^n_{i-l}} = N^n_{t - t^n_l}\,$. Since $N^n_0 = 0\,$, we deduce that 
$$
\sum_{l = 0}^{i-1}\,k^n_{l}\, N^n_{t^n_{i-l}} = \int_{[0,t]}\,N^n_{t-s}\,K^n(ds)\,,
$$
which concludes the proof of \eqref{eq:volterra_equation}.
\end{proof}

\subsection{Estimates}
We are now ready to state the estimates on the process $\Lambda^n$ that will be central in proving the tightness of our processes. We make use of the notation $G_0 = \int_0^{\cdot}\,g_0(s)\,ds\,$.
\begin{lemma}
\label{lemma:estimates_process}
We have the following moment bounds,
\begin{equation}
\label{eq:bound_exp_Lambda}
C_{\Lambda} := \sup_{n \geq 1}\,\mathbb E \left[\sup_{0 \leq t \leq T} \Lambda^n_t\right] < + \infty\,,
\end{equation}
\begin{equation}
    \label{eq:bound_exp_Lambda_2}
    C_{\Lambda^2} := \sup_{n \geq 1}\, \mathbb E \left[ \sup_{0 \leq t \leq T}\, \left(\Lambda^n_t\right)^2\right] < + \infty\,.
\end{equation}
Moreover, for any $j < i \leq n\,$,
\begin{equation}
\label{eq:bound_variation_alpha}
\Lambda^n_{t^n_i} - \Lambda^n_{t^n_j} \leq G_0(t^n_i) - G_0(t^n_j) + \left\|N^n\right\|_{\infty}\, \left(\int_{0}^{t^n_{j+1}}\, \left|K(s + t^n_i - t^n_j) - K(s)\right|\,ds + \int_{0}^{t^n_i - t^n_j}\, K(s)\,ds\right)\,.
\end{equation}
\end{lemma}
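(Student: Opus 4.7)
The plan is to handle the three estimates in order, starting with the pathwise bound \eqref{eq:bound_variation_alpha}, which is purely algebraic, then closing the moment bounds \eqref{eq:bound_exp_Lambda}--\eqref{eq:bound_exp_Lambda_2} by combining the Volterra representation of Lemma \ref{lemma:Volterra}, the martingale identities of Lemma \ref{lemma:martingales}, and an iterative Gronwall argument adapted to measure-valued kernels.

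\textbf{Step 1: the pathwise estimate \eqref{eq:bound_variation_alpha}.} Starting from the identity derived inside the proof of Lemma \ref{lemma:Volterra}, namely
$\Lambda^n_{t^n_i} = G_0(t^n_i) + \sum_{l=0}^{i-1}\,k^n_l\,N^n_{t^n_{i-l}}$, I would re-index each sum via $m = i-l$ (resp.\ $m=j-l$) and split the difference $\Lambda^n_{t^n_i}-\Lambda^n_{t^n_j}$ into a ``new indices'' piece $\sum_{m=j+1}^{i} k^n_{i-m}\,N^n_{t^n_m}$ and an ``overlap'' piece $\sum_{m=1}^{j}\bigl(k^n_{i-m}-k^n_{j-m}\bigr)\,N^n_{t^n_m}$. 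Bounding $N^n_{t^n_m}\leq \|N^n\|_\infty$ uniformly in $m$, identifying
$k^n_{i-j+p}-k^n_p = \int_{t^n_p}^{t^n_{p+1}}\bigl(K(s+t^n_{i-j}) - K(s)\bigr)\,ds$, and summing absolute values over the disjoint subintervals produces the integral $\int_0^{t^n_j}|K(s+t^n_i-t^n_j)-K(s)|\,ds$, which is dominated by the version with upper bound $t^n_{j+1}$ appearing in \eqref{eq:bound_variation_alpha}; the ``new indices'' sum collapses to $\int_0^{t^n_i-t^n_j}K(s)\,ds$.

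\textbf{Step 2: the first moment bound \eqref{eq:bound_exp_Lambda}.} Since $\Lambda^n$ is non-decreasing, $\sup_{t\leq T}\Lambda^n_t = \Lambda^n_T$, and by induction on $i$ using Lemma \ref{lemma:estimates_random_variables} one first gets $\mathbb{E}[\Lambda^n_T]<\infty$ for each fixed $n$. Taking expectation in Lemma \ref{lemma:Volterra} and using $\mathbb{E}[N^n_t]=\mathbb{E}[\Lambda^n_t]$ from Lemma \ref{lemma:martingales} yields the linear Volterra equation
\[
v^n(t) = G_0\!\bigl(\lfloor nt/T\rfloor T/n\bigr) + \int_{[0,t]} v^n(t-s)\,K^n(ds),\qquad v^n(t):=\mathbb{E}[\Lambda^n_t].
\]
Since $K\in L^1([0,T],\R_+)$, pick $\tau>0$ with $\int_0^{2\tau}K(s)\,ds < 1/2$; then for $n$ large enough $K^n([0,\tau])<1/2$ uniformly. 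Applied on $[0,\tau]$ this gives $\sup_{[0,\tau]} v^n \leq 2G_0(T)$, and iterating on $[k\tau,(k+1)\tau]$ while splitting the convolution integral at $t-\tau$ (the ``old'' part uses values of $v^n$ already bounded by the previous step, the ``recent'' part uses the small mass $K^n([0,\tau])<1/2$) yields the desired uniform bound after finitely many iterations.

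\textbf{Step 3: the second moment bound \eqref{eq:bound_exp_Lambda_2}.} Again $\sup_{t\leq T}(\Lambda^n_t)^2 = (\Lambda^n_T)^2$. Squaring the Volterra identity and applying Cauchy--Schwarz against the finite measure $K^n$ gives
\[
(\Lambda^n_t)^2 \leq 2\,G_0(T)^2 + 2\,\|K\|_{L^1}\int_{[0,t]}(N^n_{t-s})^2\,K^n(ds).
\]
Writing $N^n = \Lambda^n + Z^n$ and using $(a+b)^2\leq 2a^2+2b^2$, then taking expectation and invoking Doob's $L^2$ inequality together with Lemma \ref{lemma:martingales} ($\mathbb{E}[[Z^n]_T]\leq \mathbb{E}[\Lambda^n_T]\leq C_\Lambda$) bounds the $Z^n$-contribution by a finite constant independent of $n$. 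This leaves a linear Volterra inequality for $\mathbb{E}[(\Lambda^n_t)^2]$ with kernel $8\,\|K\|_{L^1}K^n$ and a constant forcing term, to which the same iterative Gronwall argument applies (with a possibly smaller $\tau$ to absorb the factor $8\,\|K\|_{L^1}$).

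\textbf{Main obstacle.} The delicate point is closing the Gronwall step for \eqref{eq:bound_exp_Lambda} and \eqref{eq:bound_exp_Lambda_2}: since the paper does not assume $\|K\|_{L^1}<1$, the measure $K^n$ on $[0,T]$ may have arbitrary total mass, so a direct contraction argument fails. The remedy is to exploit that $K^n$ has arbitrarily small mass near $0$ (as $k^n_0=\int_0^{T/n}K(s)\,ds\to 0$) to iterate Gronwall on a finite number of small subintervals of length $\tau$ on which the effective contraction constant is strictly less than one.
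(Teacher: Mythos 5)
Your proposal is correct, and Step 1 is essentially the paper's own computation (re-indexing the discrete convolution, splitting into the "new" block $\sum_{l=0}^{i-j-1}k^n_l=\int_0^{t^n_i-t^n_j}K$ and the "overlap" block of increments $k^n_{l+i-j}-k^n_l$, then bounding $N^n$ by $\|N^n\|_\infty$). For the moment bounds, however, you take a genuinely different route in two respects. First, the ordering: the paper only establishes \eqref{eq:bound_exp_Lambda_2} and deduces \eqref{eq:bound_exp_Lambda} from $|x|\le 1+x^2$; it keeps everything at the level of second moments by inserting $\mathbb E[[Z^n]_t]=\big((k_0^n)^2+(1-k_0^n)^2\big)\,\mathbb E[\Lambda^n_t]\le C\big(1+\mathbb E[(\Lambda^n_t)^2]\big)$, whereas you first close a separate linear Volterra equation for $v^n(t)=\mathbb E[\Lambda^n_t]$ (using $\mathbb E[N^n_t]=\mathbb E[\Lambda^n_t]$ from Lemma \ref{lemma:martingales}) and then feed $C_\Lambda$ into the $Z^n$-contribution of the second-moment inequality. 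Second, the Gronwall closure: the paper damps the kernel with an exponential weight, choosing $\theta$ so that $C\int_0^Te^{-\theta s}K(s)\,ds\le\tfrac12$ and using $\int_{[0,T]}e^{-\theta s}K^n(ds)\le e^{\theta T/n}\int_0^Te^{-\theta s}K(s)\,ds$ to get a one-shot contraction uniform in $n\ge n_1$, while you partition $[0,T]$ into finitely many blocks of length $\tau$ on which $K^n([0,\tau])\le\int_0^{\tau+T/n}K(s)\,ds<\tfrac12$ and iterate. Both mechanisms correctly handle the absence of the assumption $\|K\|_{L^1}<1$; the exponential weight gives a cleaner single-step constant, your block iteration is more elementary and avoids choosing $\theta$, at the price of a constant growing geometrically in $\lceil T/\tau\rceil$. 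Two small points to make explicit when writing it up: (i) the self-referential step of the iteration (subtracting $\tfrac12 M_{k+1}$, and its analogue for second moments) needs the a priori finiteness $\mathbb E[(\Lambda^n_T)^2]<\infty$ for each fixed $n$, which is exactly Lemma \ref{lemma:estimates_random_variables} and should also be invoked in your Step 3, not only Step 2; (ii) the finitely many $n$ for which $T/n>\tau$ (so that $K^n([0,\tau])<\tfrac12$ may fail) are covered individually by the same lemma, so the supremum over all $n\ge1$ remains finite — the paper's $e^{\theta T/n}$ estimate faces the same issue via its rank $n_1$.
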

\begin{proof}
    We first prove \eqref{eq:bound_exp_Lambda_2}, which implies \eqref{eq:bound_exp_Lambda} using $|x| \leq 1 + x^2\,$. Fix $t \leq T$ and $n \geq 1\,$, we recall that thanks to the Volterra equation \eqref{eq:volterra_equation},
    $$
    \Lambda_t^n  \leq G_0(T) + \int_{[0,t]}\,N^n_{t-s}\,K^n(ds)\,.
    $$
    Taking squares and applying Jensen's inequality on the measure $\frac{K^n(ds)}{K^n([0,t])}\,$, we have 
    $$
    \left(\Lambda^n_t\right)^2 \leq 2\,G_0^2(T) + 2\,\int_0^T\,K(s)\,ds \,\int_{[0,t]}\,\left(N^n_{t-s}\right)^2\,K^n(ds) \,,
    $$
    where we used the fact that $K^n([0\,,t]) \leq K^n([0\,,T]) = \int_0^T\,K(s)\,ds\,$.
    We can then leverage the martingale property of $Z^n = N^n - \Lambda^n$ from Lemma \ref{lemma:martingales} to apply the BDG inequality. We obtain
    \begin{align*}
        \mathbb E \left[\left(N^n_t\right)^2 \right] &\leq 2\, \mathbb E \left[\left(Z^n_t\right)^2 \right] + 2 \,\mathbb E \left[\left(\Lambda^n_t\right)^2 \right] \\
        &\leq 8\, \mathbb E \left[\left[Z^n\right]_t \right] + 2 \,\mathbb E \left[\left(\Lambda^n_t\right)^2 \right]\\ 
        &\leq 8\,\left(\left(k_0^n\right)^2 + \left(1 - k_0^n\right)^2\right)\,\mathbb E \left[\Lambda^n_t \right] + 2 \,\mathbb E \left[\left(\Lambda^n_t\right)^2 \right]\\
        &\leq 8\,\left(\left(k_0^n\right)^2 + \left(1 - k_0^n\right)^2\right)\,\mathbb E \left[1 + \left(\Lambda_t^n\right)^2 \right] + 2 \,\mathbb E \left[\left(\Lambda^n_t\right)^2 \right]\,.
    \end{align*}
    This, along with the fact that $k_0^n \to 0$ as $n \to \infty\,$, proves that we can find a constant $C > 0\,$, independent of $t$ and $n\,$, such that
    $$
    \mathbb E \left[\left(\Lambda^n_t\right)^2 \right] \leq C + C\,\int_{[0,t]}\,\mathbb E \left[\left(\Lambda^n_{t-s}\right)^2 \right]\,K^n(ds)\,.
    $$
    We now choose $\theta > 0$ such that 
    $$
    C \,\int_0^T\,e^{-\theta s}\,K(s)\,ds \leq \frac{1}{2}\,.
    $$
    Such a number exists by the dominated convergence theorem. It allows us to bound
    \begin{align*}
        \int_{[0,T]}\,e^{- \theta s}\,K^n(ds) &= \sum_{i = 0}^{n-1}\,e^{- \theta t^n_i}\, \int_{t^n_i}^{t^n_{i+1}}\,K(s)\,ds \\
        &= \sum_{i = 0}^{n-1}\,\int_{t^n_i}^{t^n_{i+1}}\,e^{\theta (s - t^n_i)}\,e^{- \theta s}\,K(s)\,ds \\
        &\leq e^{\theta T / n}\, \int_0^T\,e^{- \theta s}\,K(s)\,ds\\
        & \leq \frac{1}{2C}\,e^{\theta T / n}\,, \quad n \geq 1\,.
    \end{align*}
    Thus, there exists a rank $n_1\,$, such that
    $$
    \int_{[0,T]}\,e^{- \theta s}\,K^n(ds) \leq \frac{3}{4C}\,, \quad n \geq n_1\,.
    $$
    Hence, we obtain
    \begin{align*}
        \sup_{0 \leq t \leq T}\, e^{- \theta t}\, \mathbb E \left[\left(\Lambda^n_t\right)^2 \right] &\leq C + C \sup_{0 \leq t \leq T}\, \int_{[0,t]}\,e^{- \theta(t-s)}\, \mathbb E \left[\left(\Lambda^n_{t-s}\right)^2\right]\,e^{- \theta s}\,K^n(ds) \\
        &\leq C + C\,\left(\sup_{0 \leq t \leq T}\,e^{- \theta t}\, \mathbb E \left[\left(\Lambda^n_{t} \right)^2\right] \right)\,\int_{[0,T]}\,e^{- \theta s}\,K^n(ds) \\
        &\leq C + \frac{3}{4}\, \sup_{0 \leq t \leq T}\,e^{- \theta t}\, \mathbb E \left[\left(\Lambda^n_{t} \right)^2\right]\,, \quad n \geq n_1\,.
    \end{align*}
    For each $n\,$, the quantity $\sup\limits_{0 \leq t \leq T}\,e^{- \theta t}\, \mathbb E \left[ \left(\Lambda^n_t\right)^2\right]$ is finite, as it is bounded by $\mathbb E \left[\left(\Lambda^n_T \right)^2 \right]$ which is finite thanks to Lemma \ref{lemma:estimates_random_variables}. Therefore, we obtain,
    $$
    \sup_{0 \leq t \leq T}\,e^{- \theta t}\,\mathbb E \left[\left(\Lambda^n_t \right)^2\right] \leq 4\,C\,, \quad n \geq n_1\,,
    $$
    and finally,
    $$
    \mathbb E \left[(\Lambda^n_T)^2 \right] \leq e^{\theta T}\,\sup_{0 \leq t \leq T}\,e^{-\theta t}\, \mathbb E \left[ \left(\Lambda^n_t\right)^2\right] \leq 4\,C\,e^{\theta T}\,, \quad n \geq n_1\,.
    $$
    Since $\Lambda^n$ is non-decreasing, this proves \eqref{eq:bound_exp_Lambda_2}.

    For \eqref{eq:bound_variation_alpha}, we fix $n \geq 1$ and $j < i \leq n\,$. We have 
    \begin{align*}
        \Lambda^n_{t^n_i} - \Lambda^n_{t^n_j} &= \int_{t^n_j}^{t^n_i}\,g_0(s)\,ds + \sum_{l = 0}^i\,N^n_{t^n_i - t^n_l}\,k^n_l - \sum_{l = 0}^j\,N^n_{t^n_j - t^n_l}\,k^n_l \\
        &= \int_{t^n_j}^{t^n_i}\,g_0(s)\,ds + \sum_{l = 0}^{i -j -1}\,N^n_{t^n_i - t^n_l}\,k^n_l + \sum_{l = i-j}^i\,N^n_{t^n_i - t^n_l}\,k^n_l - \sum_{l = 0}^j \,N^n_{t^n_j - t^n_l}\,k^n_l\\
        &= \int_{t^n_j}^{t^n_i}\,g_0(s)\,ds + \sum_{l = 0}^{i - j - 1}\,N^n_{t^n_i - t^n_l}\,k^n_l + \sum_{l = 0}^j\,N^n_{t^n_j - t^n_l}\,\left(k^n_{l + i -j} - k^n_l\right) \\
        &\leq \int_{t^n_j}^{t^n_i}\,g_0(s)\,ds + \|N^n\|_{\infty}\, \left(\sum_{l = 0}^{i-j-1}\,k^n_l + \sum_{l = 0}^j\,\left| k^n_{l + i - j} - k^n_l\right|\right)\,.
    \end{align*}
    We conclude by computing
    $$
    \sum_{l = 0}^{i - j-1}\,k^n_l = \int_0^{t^n_i - t^n_j}\,K(s)\,ds\,,
    $$
    and
    \begin{align*}
        \sum_{l = 0}^j\,\left| k^n_{l + i - j} - k^n_l\right| = \sum_{l = 0}^{j}\left|\int_{t^n_l}^{t^n_{l+1}}\,\left(K(s + t^n_i - t^n_j) - K(s) \right)\,ds \right|
        \leq \int_0^{t^n_{j+1}}\, \left|K(s + t^n_i - t^n_j) - K(s) \right|\,ds\,.
    \end{align*}
\end{proof}

\begin{remark}
\label{remark:additional_bound}
For any $n \geq 1\,$, $Z^n = N^n - \Lambda^n$ is a martingale, therefore by the BDG inequality,
$$
\mathbb E \left[\sup_{0 \leq t \leq T}\,\left|Z^n_t\right|^2\right] \leq 4\,\mathbb E \left[\left[Z^n\right]_T\right] \leq 4\,\left(\left(k_0^n\right)^2 + \left(1-k_0^n\right)^2\right)\, \mathbb E \left[\Lambda^n_T\right]\,,
$$
where we used \eqref{eq:exp_quadratic_var_Z}. Combining this with the bound \eqref{eq:bound_exp_Lambda}, we obtain
\begin{equation}
\label{eq:bound_exp_Z}
    C_Z := \sup_{n \geq 1}\, \mathbb E \left[ \sup_{0 \leq t \leq T}\, \left|Z^n_t\right|^2\right] < + \infty\,,
\end{equation}
since $k_0^n$ goes to $0$ in the limit $n \to \infty\,$.
\end{remark}

We will need one additional property to characterize the limiting processes. Namely, we want to ensure the limit of $\left(N^n\right)_{n \geq 1}$ is a counting process, and therefore only has unit jumps.
\begin{lemma}  \label{lemma:bound_proba_jumps}
We have
$$
\mathbb P \left(\sup_{0 \leq t \leq T}\,\Delta N^n_t > 1 \right) \overset{n \to \infty}{\longrightarrow} 0\,.
$$
\end{lemma}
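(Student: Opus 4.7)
The plan is to translate the event $\{\sup_t \Delta N^n_t > 1\}$ into a union over increments exceeding $1$, use the Poisson/IG moment structure to reduce to an estimate on $\sum_i (\alpha_i^n)^2$, and finally exploit the deterministic variation bound \eqref{eq:bound_variation_alpha} together with the $L^2$ bound on $\|N^n\|_\infty$ to show that the largest jump of $\Lambda^n$ vanishes in $L^2$.

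First, since $N^n$ jumps only at grid points with $\Delta N^n_{t^n_{i+1}} = \widehat N^n_{i,i+1}$, and the increments are integer-valued, a union bound gives
$$
\mathbb P\bigl(\sup_{0 \le t \le T} \Delta N^n_t > 1\bigr) \le \sum_{i=0}^{n-1} \mathbb P\bigl(\widehat N^n_{i,i+1} \ge 2\bigr).
$$
For a Poisson variable $X \sim \mathcal P(\lambda)$, the inequality $X(X-1) \ge 2\,\mathbbm 1_{X \ge 2}$ and $\mathbb E[X(X-1)] = \lambda^2$ yield $\mathbb P(X \ge 2) \le \lambda^2/2$. Applied conditionally on $\xi_i^n$, and then combined with the IG second moment computed in Lemma \ref{lemma:estimates_random_variables}, one obtains
$$
\sum_{i=0}^{n-1} \mathbb P\bigl(\widehat N^n_{i,i+1} \ge 2\bigr) \le \frac{1}{2(1-k_0^n)^2}\,\mathbb E\!\left[\sum_i (\alpha_i^n)^2\right] + \frac{(k_0^n)^2}{2(1-k_0^n)^3}\,\mathbb E\!\left[\sum_i \alpha_i^n\right].
$$
The second term vanishes since $k_0^n \to 0$ and $\sum_i \alpha_i^n = \Lambda^n_T - k_0^n N^n_T \le \Lambda^n_T$ has uniformly bounded $L^1$ norm by \eqref{eq:bound_exp_Lambda}.

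The remaining task is to show $\mathbb E[\sum_i (\alpha_i^n)^2] \to 0$. Using the elementary bound $\sum_i a_i^2 \le (\max_i a_i)(\sum_i a_i)$ for nonnegative reals together with $\alpha_i^n \le \widehat\Lambda^n_{i,i+1}$ and $\sum_i \alpha_i^n \le \Lambda^n_T$, we have
$$
\sum_i (\alpha_i^n)^2 \le \Bigl(\max_{0 \le i \le n-1} \widehat \Lambda^n_{i,i+1}\Bigr)\,\Lambda^n_T,
$$
so by Cauchy--Schwarz it suffices to prove that $\mathbb E[(\max_i \widehat\Lambda^n_{i,i+1})^2] \to 0$, as $\mathbb E[(\Lambda^n_T)^2]$ is bounded by \eqref{eq:bound_exp_Lambda_2}. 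This is the crux of the argument: invoking \eqref{eq:bound_variation_alpha} with $j = i$ and $i+1$ in place of $i$, we deduce
$$
\max_{0 \le i \le n-1} \widehat\Lambda^n_{i,i+1} \le \eta^1_n + \|N^n\|_\infty\,\eta^2_n,
$$
where $\eta^1_n := \max_i (G_0(t^n_{i+1}) - G_0(t^n_i))$ and $\eta^2_n := \int_0^T |K(s + T/n) - K(s)|\,ds + k_0^n$. The modulus $\eta^1_n$ tends to $0$ by uniform continuity of $G_0$ on $[0,T]$, while $\eta^2_n \to 0$ by $L^1$-continuity of translation applied to $K \in L^1([0,T], \R_+)$. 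Finally, $\mathbb E[\|N^n\|_\infty^2] \le 2\mathbb E[\|Z^n\|_\infty^2] + 2\mathbb E[\|\Lambda^n\|_\infty^2] \le 2(C_Z + C_{\Lambda^2})$ is uniformly bounded by \eqref{eq:bound_exp_Z} and \eqref{eq:bound_exp_Lambda_2}, hence
$$
\mathbb E\Bigl[\bigl(\max_i \widehat\Lambda^n_{i,i+1}\bigr)^2\Bigr] \le 2(\eta^1_n)^2 + 2(\eta^2_n)^2\,\mathbb E[\|N^n\|_\infty^2] \xrightarrow{n \to \infty} 0,
$$
which closes the argument.

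The main obstacle is step three: obtaining deterministic control on the maximum increment of $\Lambda^n$. The estimate \eqref{eq:bound_variation_alpha} reduces this to a modulus of continuity of $G_0$ and an $L^1$-translation modulus of $K$, which is precisely the minimal regularity consistent with the standing assumption $g_0, K \in L^1([0,T], \R_+)$. Coupling this deterministic bound with the $L^2$ control of $\|N^n\|_\infty$ from Lemma \ref{lemma:martingales} and Remark \ref{remark:additional_bound} is what allows the moment blow-up from $N^n$ to be absorbed by the vanishing kernel modulus.
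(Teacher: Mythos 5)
Your proposal is correct and follows essentially the same route as the paper: a union bound over grid increments, a conditional Poisson tail estimate reduced via the Inverse Gaussian second moment to $\mathbb E[(\alpha^n_i)^2]$, the inequality $\alpha^n_i \le \widehat\Lambda^n_{i,i+1}$, and control of $\max_i \widehat\Lambda^n_{i,i+1}$ through \eqref{eq:bound_variation_alpha} combined with the uniform moment bounds \eqref{eq:bound_exp_Lambda}--\eqref{eq:bound_exp_Lambda_2} and \eqref{eq:bound_exp_Z}. The only differences are cosmetic (the factorial-moment bound $\mathbb P(X\ge 2)\le \lambda^2/2$ in place of $1-e^{-x}-xe^{-x}\le x^2$, and an extra Cauchy--Schwarz step instead of the paper's direct bound on $\mathbb E[\Lambda^n_T\,\max_j\widehat\Lambda^n_{j,j+1}]$), so no further changes are needed.
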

\begin{proof}
    Let $n \geq 1\,$, we have 
    $$
    \mathbb P \left(\sup_{ 0 \leq t \leq T}\, \Delta\,N^n_t > 1 \right) = \mathbb P \left(\bigcup_{i = 0}^{n-1}\, \left\{\widehat N^n_{i,i+1} \geq 2 \right\} \right) \leq \sum_{i = 0}^{n-1}\, \mathbb P \left(\widehat N^n_{i,i+1} \geq 2 \right)
    $$
    Then, since $\widehat N^n_{i,i+1} \sim \mathcal P \left( \xi^n_i \right)$ conditional on $\xi^n_i\,$, we have 
    $$
    \mathbb P \left(\widehat N^n_{i,i+1} \geq 2 \right) = \mathbb E \left[1 - e^{- \xi^n_i} -  \xi^n_i\, e^{- \xi^n_i} \right]\,, \quad i \leq n-1\,.
    $$
    Using the bound $1 - e^{-x} - x\,e^{-x} \leq x^2$ for $x \geq 0\,$, we obtain
    \begin{align*}
        \mathbb P \left(\widehat N^n_{i,i+1} \geq 2\right) &\leq \mathbb E \left[\left(\xi^n_i\right)^2 \right] = \mathbb E \left[\mathbb E \left[ \left(\xi^n_i\right)^2\,\middle | \alpha^n_i\right] \right] = \frac{\left(k_0^n\right)^2}{\left(1 - k_0^n\right)^3}\, \mathbb E \left[ \alpha^n_i\right] + \mathbb E \left[\left(\alpha^n_i\right)^2\right]\,, \quad i \leq n-1\,,
    \end{align*}
    where we used \eqref{eq:IG_moments} for computing the second order moment of an Inverse Gaussian variable. We now use the fact that $\alpha^n_i$ is nonnegative (see \eqref{eq:alphai}) and $\alpha^n_i \leq \widehat \Lambda^n_{i,i+1}$ thanks to \eqref{eq:hatLambdasample}. Therefore,
    \begin{align*}
    \mathbb P \left(\widehat N^n_{i,i+1} \geq 2 \right) &\leq \frac{\left(k_0^n\right)^2}{\left(1 - k_0^n\right)^3}\, \mathbb E \left[\widehat \Lambda^n_{i,i+1} \right] + \mathbb E \left[\left(\widehat \Lambda^n_{i,i+1}\right)^2 \right] \\
    &\leq \frac{\left(k_0^n\right)^2}{\left(1 - k_0^n\right)^3}\, \mathbb E \left[\widehat \Lambda^n_{i,i+1} \right] + \mathbb E \left[\widehat \Lambda^n_{i,i+1}\,\max_{0 \leq j \leq n-1}\, \widehat \Lambda^n_{j,j+1} \right]\,, \quad i \leq n-1\,.
    \end{align*}
    Summing this inequality, we get
    $$
    \mathbb P \left(\sup_{0 \leq t \leq T}\,\Delta N^n_t > 1 \right) \leq \mathbb E \left[\Lambda^n_T\, \left(\frac{\left(k_0^n\right)^2}{\left(1 - k_0^n\right)^3} + \max_{0 \leq j \leq n-1}\, \widehat \Lambda^n_{j,j+1} \right) \right] \,.
    $$
    We treat the two terms on the right-hand side separately. First, 
    $$
    \frac{\left(k_0^n\right)^2}{\left(1 - k_0^n\right)^3}\, \mathbb E \left[\Lambda^n_T \right] \leq \frac{\left(k_0^n\right)^2}{\left(1 - k_0^n\right)^3}\, C_{\Lambda} \overset{n \to \infty}{\longrightarrow} 0\,,
    $$
    from \eqref{eq:bound_exp_Lambda} and the fact that $k_0^n \to 0$ in the limit. Secondly,
    \begin{align*}
        \mathbb E \left[\Lambda^n_T\, \max_{0 \leq j \leq n-1}\,\widehat \Lambda^n_{j,j+1}\right] &\leq w\left(G_0,\frac{T}{n}\right)\,\mathbb E \left[\Lambda^n_T\right] + \mathbb E \left[\Lambda^n_T\, \|N^n\|_{\infty}\right]\,\left(\int_0^{T-\frac{T}{n}}\,\left|K\left(t + \frac{T}{n}\right) - K(t)\right| \,dt + k_0^n\right)
    \end{align*}
    from \eqref{eq:bound_variation_alpha}. Here, $w$ stands for the classical modulus of continuity. The right-hand side vanishes in the limit since 
    $$
    w\left(G_0, \frac{T}{n}\right)\,, \quad k_0^n\,, \quad \text{and}\quad  \int_{0}^{T - \frac{T}{n}}\,\left|K\left(t+\frac{T}{n}\right) - K(t)\right|\,dt
    $$
    all converge to $0$ (see Lemma \ref{lemma:translation_continuous_L1}), and
    $$
    \sup_{n \geq 1}\,\mathbb E \left[ \Lambda^n_T\, \|N^n\|_{\infty}\right] < + \infty
    $$
    by Lemma \ref{lemma:estimates_process} and Remark \ref{remark:additional_bound}.
    Thus, we can conclude that
    $$
    \mathbb P \left(\sup_{0 \leq t \leq T}\, \Delta N^n_t > 1 \right) \overset{n \to \infty}{\longrightarrow} 0\,.
    $$
\end{proof}

\subsection{Tightness and characterization of the limiting processes}

We now have all the elements to prove the tightness of our processes in the Skorokhod topology. By \textit{C-tightness}, we mean tightness with almost surely continuous limits, see \citet[Definition~VI.3.25]{jacod2013limit}.
\begin{lemma}
    \label{lemma:tightness}
    The sequence $\left(\Lambda^n\right)_{n \geq 1}$ is C-tight, and $\left( N^n \right)_{n \geq 1}$ is tight.
\end{lemma}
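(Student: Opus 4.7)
The plan is to establish C-tightness of $(\Lambda^n)$ first, and then leverage the martingale decomposition $N^n = \Lambda^n + Z^n$ together with the quadratic variation control from Lemma~\ref{lemma:martingales} to deduce tightness of $(N^n)$.

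For C-tightness of $(\Lambda^n)$, I would work directly with the modulus of continuity. The key input is the deterministic-looking bound \eqref{eq:bound_variation_alpha}, which extends to arbitrary times because $\Lambda^n$ is piecewise constant on the grid. Given $\epsilon,\eta>0$, I first localize on the event $\{\|N^n\|_\infty\le M\}$: the uniform $L^2$-bound $\sup_n \mathbb{E}[\|N^n\|_\infty^2] < \infty$, obtained from $N^n = Z^n + \Lambda^n$ together with \eqref{eq:bound_exp_Lambda_2} and \eqref{eq:bound_exp_Z}, allows me to choose $M$ so that $\sup_n\mathbb{P}(\|N^n\|_\infty>M)<\eta$. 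On the complementary event, the three terms on the right-hand side of \eqref{eq:bound_variation_alpha}, namely the $G_0$-increment, the $L^1$-translation of $K$, and $\int_0^h K$, all vanish uniformly in $n$ as $h\to 0$ by absolute continuity of $G_0$, translation continuity in $L^1$ (Lemma~\ref{lemma:translation_continuous_L1}), and dominated convergence. Choosing $\delta$ small enough then yields $\sup_n\mathbb{P}(w(\Lambda^n,\delta)>\epsilon)\le \eta$, where $w$ is the standard modulus of continuity. Applied with $j=i-1$, the same estimate also shows that $\sup_{t\le T}\Delta\Lambda^n_t\to 0$ in probability, which together with $\Lambda^n_0=0$ yields C-tightness of $(\Lambda^n)$.

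For tightness of $(N^n)$, it suffices, in view of the decomposition $N^n = \Lambda^n + Z^n$ and the C-tightness of $(\Lambda^n)$, to establish tightness of the martingale sequence $(Z^n)$ and conclude via joint tightness. I would verify Aldous's criterion: given stopping times $\sigma_n\le\tau_n\le(\sigma_n+\delta_n)\wedge T$ with $\delta_n\to 0$, optional stopping applied to the martingale $[Z^n]-[(k_0^n)^2+(1-k_0^n)^2]\Lambda^n$ from Lemma~\ref{lemma:martingales} gives
\begin{equation*}
\mathbb{E}\bigl[(Z^n_{\tau_n}-Z^n_{\sigma_n})^2\bigr] \;=\; \mathbb{E}\bigl[[Z^n]_{\tau_n}-[Z^n]_{\sigma_n}\bigr] \;=\; \bigl((k_0^n)^2+(1-k_0^n)^2\bigr)\,\mathbb{E}\bigl[\Lambda^n_{\tau_n}-\Lambda^n_{\sigma_n}\bigr].
\end{equation*}
The prefactor is uniformly bounded, and $\Lambda^n_{\tau_n}-\Lambda^n_{\sigma_n}\le w(\Lambda^n,\delta_n+T/n)$ is dominated by $\Lambda^n_T$, which is $L^2$-bounded uniformly by \eqref{eq:bound_exp_Lambda_2}; hence uniform integrability combined with the C-tightness-based modulus estimate of the first step yields $\mathbb{E}[\Lambda^n_{\tau_n}-\Lambda^n_{\sigma_n}]\to 0$. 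This proves $Z^n_{\tau_n}-Z^n_{\sigma_n}\to 0$ in $L^2$, hence in probability, which together with $Z^n_0=0$ and the uniform bound $\sup_n \mathbb{E}[\sup_t |Z^n_t|^2]<\infty$ from Remark~\ref{remark:additional_bound} gives tightness of $(Z^n)$. Since $(\Lambda^n)$ is C-tight, $(Z^n,\Lambda^n)$ is jointly tight, and continuity of the sum map at pairs with one continuous component transfers tightness to $N^n=Z^n+\Lambda^n$.

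The main subtlety, rather than any single hard estimate, lies in coupling the grid-based bound \eqref{eq:bound_variation_alpha} with stopping times that are \emph{not} grid points: the piecewise-constant structure of $\Lambda^n$ makes this transition immediate, provided one inflates the window by a single mesh size $T/n$, which is harmless since $T/n\to 0$. The other delicate point is ensuring that the $L^1$-convergence $\mathbb{E}[\Lambda^n_{\tau_n}-\Lambda^n_{\sigma_n}]\to 0$ holds along arbitrary stopping times rather than only deterministic ones; this is precisely where the $L^2$-uniform integrability from \eqref{eq:bound_exp_Lambda_2} is used, bypassing any need for a Skorokhod coupling of the C-tight sequence.
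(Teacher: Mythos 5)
Your proposal is correct and follows essentially the same route as the paper: the variation bound \eqref{eq:bound_variation_alpha} together with the moment estimates \eqref{eq:bound_exp_Lambda}--\eqref{eq:bound_exp_Z} and Lemma~\ref{lemma:translation_continuous_L1} to control the modulus of continuity of $\Lambda^n$, then Aldous' criterion for the martingale $Z^n$ via the compensated quadratic variation of Lemma~\ref{lemma:martingales}, and finally tightness of $N^n=\Lambda^n+Z^n$ from C-tightness of $\Lambda^n$. The only differences are cosmetic (you control the modulus in probability by localizing on $\{\|N^n\|_\infty\le M\}$ and pass to $L^1$ via uniform integrability, whereas the paper bounds $\mathbb{E}[w(\Lambda^n,\delta)]$ directly), so no further changes are needed.
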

\begin{proof}
    We start by proving C-tightness of $\left(\Lambda^n\right)_{n \geq 1}$ using criterion \textit{(ii)} from \citet[Proposition~VI.3.26]{jacod2013limit}. Firstly, \eqref{eq:bound_exp_Lambda} implies
    $$
    \lim_{R \to \infty}\, \limsup_{n \to \infty}\,\mathbb P \left(\sup_{0 \leq t \leq T}\, \left|\Lambda^n_t\right| \geq R \right) = 0\,. 
    $$
    Then, using \eqref{eq:bound_variation_alpha}, and noticing that we can rewrite the modulus of continuity of $\Lambda^n$ as 
    $$
    w\left(\Lambda^n, \delta\right) = \max_{\substack{0 \leq j \leq i \leq n \\ t^n_i - t^n_j \leq \delta}} \left|\Lambda^n_{t^n_i} - \Lambda^n_{t^n_j}\right|\,, \quad \delta > 0\,,
    $$
    we get 
    $$
w\left(\Lambda^n,\delta\right) \leq w\left(G_0, \delta\right) + \left\|N^n\right\|_{\infty}\, \left(\sup_{0 \leq \eta \leq \delta}\, \int_0^{T-\eta}\, \left|K(s + \eta) - K(s)\right|\,ds + \int_0^{\delta}\, K(s)\,ds \right) \,.
    $$
Since $g_0\,,K \in L^1([0\,, T]\,, \R_+)\,$, the dominated convergence theorem shows that $w(G_0,\delta)$ and $\int_0^{\delta}\,K(s)\,ds$ go to $0$ as $\delta \to 0^+\,$, and Lemma \ref{lemma:translation_continuous_L1} proves that it is also the case for 
$$
\sup_{0 \leq \eta \leq \delta}\, \int_0^{T-\eta} \, \left|K(s + \eta) - K(s)\right|\, ds\,.
$$
Using the bounds \eqref{eq:bound_exp_Lambda}-\eqref{eq:bound_exp_Z}, we get
$$
\sup_{n \geq 1} \mathbb E \left[\left\|N^n\right\|_{\infty}\right] < + \infty\,,
$$
so that finally
\begin{equation}
\label{eq:limit_exp_continuity_modulus_Lambda}
\lim_{\delta \to 0^+}\, \limsup_{n \to \infty}\, \mathbb E \left[w\left(\Lambda^n, \delta\right)\right] = 0\,.
\end{equation}
Thus, $\left(\Lambda^n\right)_{n \geq 1}$ is C-tight.

Finally, we prove that $\left(N^n\right)_{n \geq 1}$ is tight. We use Aldous' criterion on $\left(Z^n\right)_{n \geq 1}\,$, see \citet[Theorem~VI.4.5]{jacod2013limit}. Let $n \geq 1\,$, $\delta > 0\,$, and $0 \leq S_1^n \leq S_2^n \leq T$ two $\left(\mathcal F^n_t\right)_{t \leq T}$-stopping times, such that $S_2^n - S_1^n \leq \delta$ almost surely. Then, using the martingality of $[Z^n] - \left(\left(k_0^n\right)^2 + \left(1 - k_0^n\right)^2\right)\,\Lambda^n$ from Lemma \ref{lemma:martingales}, we have 
\begin{align*}
\mathbb E \left[\left(Z^n_{S_2^n} - Z^n_{S_1^n} \right)^2\right] &= \mathbb E \left[[Z^n]_{S_2^n} - [Z^n]_{S_1^n} \right]\\
&= \left(\left(k_0^n\right)^2 + \left(1 - k_0^n\right)^2\right)\,\mathbb E \left[\Lambda^n_{S_2^n} - \Lambda^n_{S_1^n}\right] \\
&\leq \left( \left(k_0^n\right)^2 + \left(1 - k_0^n\right)^2\right)\, \mathbb E \left[w\left(\Lambda^n,\delta\right)\right]\,.
\end{align*}
Therefore, defining the sets
$$
\mathcal T^n_{\delta} := \left\{S_1, S_2 \,\,\text{$\left(\mathcal F^n_t\right)_{t \leq T}$-stopping times}\,:\, 0 \leq S_1 \leq S_2 \leq \min(T\,, S_1 + \delta) \right\}\,, \quad \delta > 0\,, \quad n \geq 1\,,
$$
we have 
$$
\limsup_{n \geq 1}\,\sup_{\left(S_1^n,S_2^n\right) \in \mathcal T^n_{\delta}}\, \mathbb P \left(\left|Z^n_{S_2^n} - Z^n_{S_1^n}\right| > \varepsilon \right) \leq \limsup_{n \geq 1}\,\frac{\left(k_0^n\right)^2+ \left(1 - k_0^n\right)^2}{\varepsilon^2}\, \mathbb E \left[w\left(\Lambda^n, \delta\right)\right]\,, \quad \delta > 0\,, \quad \varepsilon > 0\,,
$$
which vanishes in the limit $\delta \to 0^+\,$, from \eqref{eq:limit_exp_continuity_modulus_Lambda}. Thus, $\left(Z^n\right)_{n \geq 1}$ is tight for the Skorokhod topology. We conclude by using the fact that $\left(\Lambda^n\right)_{n \geq 1}$ is C-tight, so that $\left(N^n\right)_{n \geq 1} = \left(Z^n + \Lambda^n\right)_{n \geq 1}$ is tight, thanks to \citet[Corollary~VI.3.33]{jacod2013limit}.
\end{proof}

From Lemma \ref{lemma:tightness}, $\left(\Lambda^n\,, N^n\right)_{n \geq 1}$ is tight. By Prokhorov's theorem, it admits a weakly convergent subsequence. The following lemma characterizes the accumulation points.
\begin{lemma}
\label{lemma:characterization_limit}
Let $\left(\Lambda, N\right)$ be a weak accumulation point of $\left(\Lambda^n\,, N^n\right)_{n \geq 1}\,$. We have the following:
\begin{enumerate}[label=(\roman*)]
    \item $\Lambda$ is a continuous, non-decreasing and nonnegative process, starting from $0\,$.
    \item $N$ is a piecewise constant càdlàg process with unit jumps, starting from $0\,$.
    \item $\Lambda = \int_0^{\cdot}\,\lambda_s\,ds$ with $\lambda$ satisfying
    $$
    \lambda_t = g_0(t) + \int_0^t\,K(t-s)\,dN_s\,, \quad t \leq T\,.
    $$
    \item $N - \Lambda$ is a martingale with respect to the filtration generated by $N\,$.
\end{enumerate}
\end{lemma}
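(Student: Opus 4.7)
The plan is to extract a weakly convergent subsequence via Lemma~\ref{lemma:tightness} and Prokhorov's theorem, invoke Skorokhod's representation so that $(\Lambda^n, N^n) \to (\Lambda, N)$ almost surely in $J_1$ on a common probability space, and then verify the four properties in turn. Since we only aim to characterize \emph{any} accumulation point, further subsequences may be extracted freely.

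For (i), continuity of $\Lambda$ follows from the C-tightness of $(\Lambda^n)_{n \geq 1}$ established in Lemma~\ref{lemma:tightness}, while nonnegativity, monotonicity, and $\Lambda_0 = 0$ transfer to the limit since each $\Lambda^n$ shares these properties and the limit is continuous. For (ii), I would use that $J_1$-convergence preserves both the integer-valued nature and piecewise-constant structure of the approximating sequence, as each jump of the limit corresponds to a limit of jumps of $N^n$. Lemma~\ref{lemma:bound_proba_jumps}, combined with a Borel--Cantelli-type subsequence extraction, then ensures that almost surely $\sup_t \Delta N^n_t \leq 1$ for $n$ large enough, which forces every jump of $N$ to be of unit size and shows that $N$ is a counting process on $[0,T]$.

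For (iii), the central step is to pass to the limit in the Volterra representation of Lemma~\ref{lemma:Volterra},
$$
\Lambda^n_t = \int_0^{\lfloor nt/T \rfloor T/n} g_0(s)\,ds + \int_{[0,t]} N^n_{t-s}\,K^n(ds).
$$
The deterministic term converges to $G_0(t)$ for every $t \leq T$ by integrability of $g_0$. For the stochastic convolution, I would invoke the stability result Lemma~\ref{lemma:stability_equation}, designed for stochastic Volterra equations with measure-valued kernels: the discrete measures $K^n$ approximate $K(s)\,ds$ while $N^n \to N$ in $J_1$, and the lemma identifies the limit as $\int_0^t K(t-s) N_s\,ds$. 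This yields $\Lambda_t = G_0(t) + \int_0^t K(t-s) N_s\,ds$; a Fubini argument exploiting the bounded variation of $N$ then recovers the density $\lambda_t = g_0(t) + \int_0^t K(t-s)\,dN_s$ with $\Lambda = \int_0^\cdot \lambda_s\,ds$. For (iv), I would invoke Lemma~\ref{lemma:martingales}, which gives that $Z^n = N^n - \Lambda^n$ is an $(\mathcal{F}^n_t)$-martingale, together with the uniform bound $\sup_n \mathbb{E}[\sup_t (Z^n_t)^2] < \infty$ from Remark~\ref{remark:additional_bound}. This uniform integrability allows passing to the limit in the identity $\mathbb{E}[Z^n_t F(N^n)] = \mathbb{E}[Z^n_s F(N^n)]$, valid for continuity points $s < t$ of the limit and $F$ a bounded continuous functional of $(N_u)_{u \leq s}$, which yields the martingale property of $Z = N - \Lambda$ with respect to the filtration generated by $N$.

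The main obstacle is expected to be step (iii): identifying the limit of the measure-valued convolution $\int_{[0,t]} N^n_{t-s}\,K^n(ds)$ requires simultaneously handling the $J_1$-convergence of the jumpy integrand $N^n$ and the convergence of the discrete measure $K^n$ toward $K(s)\,ds$. Classical stability results for $L^p$-valued kernels, such as those in \cite{abi2021weak, abietal2021weak}, do not apply in this measure-valued setting, which is precisely why the tailored Lemma~\ref{lemma:stability_equation} is essential.
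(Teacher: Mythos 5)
Your proposal is correct and follows essentially the same route as the paper: Skorokhod representation combined with the pathwise stability Lemma~\ref{lemma:stability_equation} for (iii), Lemma~\ref{lemma:bound_proba_jumps} for the unit-jump property, and the uniform integrability bound of Remark~\ref{remark:additional_bound} to pass the martingale property of $Z^n=N^n-\Lambda^n$ to the limit for (iv). The only cosmetic differences are that the paper transfers the unit-jump bound via Portmanteau together with the $J_1$-continuity of the maximal-jump functional (Lemma~\ref{lemma:functionals_continuity}) rather than a Borel--Cantelli subsequence extraction, and it cites a martingale-preservation theorem of Whitt instead of writing out the test-functional argument you sketch.
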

\begin{proof}
    We first prove \textit{(i)}, \textit{(ii)} by standard arguments.

    The first point comes directly from the definition of $C$-tightness and the fact that each $\Lambda^n$ is non-decreasing, nonnegative, and starts from $0\,$.

    For \textit{(ii)}, $N_0 = 0$ almost surely as it is also the case for each $N^n\,$. Furthermore, we know that $\mathbb P \left(N^n_t \in \mathbb N\,,\,t \leq T\,, n \geq 1\right) = 1\,$. Since $\left\{f \in D([0\,,T])\,:\, f(t) \in \mathbb N\,,\, t \leq T \right\}$ is a closed set for the Skorokhod topology. We deduce by Portmanteau's theorem that $N$ takes its values in $\N\,$, almost surely. It remains to prove that its jumps can only be equal to $1\,$. From Lemma \ref{lemma:bound_proba_jumps}, we have
    $$
    \mathbb P \left(\sup_{0 \leq t \leq T}\, \Delta N^n_t > 1 \right) \overset{n \to \infty}{\longrightarrow} 0\,.
    $$
    Denoting by $\mathbb P_n'$ the law of $N^n\,$, this can be rewritten as 
    $$
    \mathbb P_n'\left(\mathcal S_{\Delta}^{-1}((1\,, + \infty)) \right) \overset{n \to \infty}{\longrightarrow} 0\,,
    $$
    where $\mathcal S_{\Delta} : f \mapsto \sup\limits_{0 \leq t \leq T}\,\left|\Delta f(t)\right|$ is shown to be continuous for the Skorokhod topology in Lemma \ref{lemma:functionals_continuity}, so that $\mathcal S_{\Delta}^{-1}((1\,, + \infty))$ is an open set, and by applying Portmanteau's theorem, we obtain
    $$
    \Delta N_t \leq 1\,, \quad t \leq T\,, \quad a.s.
    $$
    which concludes the proof of \textit{(ii)}.

    For the proof of \textit{(iii)}, we make use of Skorokhod's representation theorem. Since we have a subsequence $\left(\Lambda^{n_k}\,, N^{n_k}\right)_{k \geq 0}$ converging weakly to $(\Lambda\,, N)\,$, there exists càdlàg processes $\left(\tilde \Lambda^k\,, \tilde N^k\right)_{k \geq 0}$ and $\left(\tilde \Lambda\,, \tilde N\right)$ all defined on the same probability space $\left(\tilde \Omega\,, \tilde{\mathcal{F}}\,, \tilde{\mathbb{P}}\right)\,$, such that:
    \begin{itemize}
        \item For any $k \geq 0\,$, $\left(\tilde \Lambda^k \,, \tilde N^k\right) \sim \left(\Lambda^{n_k}\,, N^{n_k}\right)\,$, and $\left(\tilde \Lambda\,, \tilde N\right) \sim \left(\Lambda\,, N\right)\,$.
        \item For any $\omega \in \tilde \Omega\,$, $\left(\tilde \Lambda^k(\omega)\,, \tilde N^k(\omega)\right) \overset{k \to \infty}{\longrightarrow} \left( \tilde \Lambda(\omega)\,, \tilde N(\omega)\right)$ in the Skorokhod topology.
    \end{itemize}
    Using the same reasoning as above, $\tilde \Lambda$ is almost surely continuous. We use the notation $G_0^n(t) := \int_0^{\lfloor n \frac{t}{T} \rfloor \frac{T}{t}}\,g_0(s)\,ds\,$, and we define the following subsets of $\left(D([0\,,T]\right)^2\,$,
    $$
    \mathcal V^n := \left\{(f,g) \in (D([0\,,T])^2\,:\, f(t) = G_0^n(t) + \int_{[0,t]}\,g(t-s)\,K^n(ds)\,,\,\,t \leq T  \right\}\,, \quad n \geq 1\,,
    $$
    $$
    \mathcal V := \left\{(f,g) \in (D([0\,,T])^2\,:\,f(t) = G_0(t) + \int_0^t\,K(t-s)\,g(s)\,ds\,,\,\,t\leq T \right\}\,.
    $$
    From the Volterra equation of Lemma \ref{lemma:Volterra}, we know that
    $$
    \mathbb P \left(\left(\Lambda^n\,,N^n\right) \in \mathcal V^n \right) = 1\,, \quad n \geq 1\,.
    $$
    By equality in law of the couples,
    $$
    \tilde{\mathbb P}\left(\left(\tilde \Lambda^k\,, \tilde N^k \right) \in \mathcal V^{n_k}\right) = 1\,, \quad k \geq 0\,,
    $$
    leading to 
    $$
    \tilde{\mathbb P}\left(\left(\tilde \Lambda^k\,, \tilde N^k\right) \in \mathcal V^{n_k}\,, \,\, k \geq 0 \right) = 1
    $$
    by countability. Thus, the set
    $$
    \mathcal A := \left\{\omega \in \tilde \Omega\,:\, \left(\tilde \Lambda^k(\omega)\,, \tilde N^k(\omega)  \right) \in \mathcal V^{n_k}\,,\,\,k\geq0 \right\}\,\bigcap \,\left\{\tilde \Lambda\,\,\text{continuous} \right\}\,,
    $$
    has unit measure, and for any $\omega \in \mathcal A\,$, we can apply Lemma \ref{lemma:stability_equation} with $\left(f_k\right)_{k \geq 0} := \left(\tilde N^k(\omega) \right)_{k \geq 0}$ and $\left(F_k\right)_{k \geq 0} := \left(\tilde \Lambda^k(\omega) - G_0^{n_k}\right)_{k \geq 0}\,$. This leads to $\tilde{P}\left(\left(\tilde \Lambda\,, \tilde N\right) \in \mathcal V \right) = 1\,$, and therefore,
    $$
    \Lambda_t = G_0(t) + \int_0^t \,K(t-s)\,N_s\,ds\,,\quad t \leq T\,, \quad a.s.
    $$
    We conclude by using the fact that $N$ has finite variation on $[0\,,T]$ since it is non-decreasing, which allows us to rewrite the previous equation as 
    $$
    \Lambda_t = \int_0^t\,\left(g_0(s) + \int_0^s\,K(s-r)\,dN_r \right)\,ds\,, \quad t \leq T\,, \quad a.s.\,,
    $$
    identifying $\lambda$ explicitly.

    Finally, for \textit{(iv)}, we have the bound 
    $$
    \sup_{n \geq 1}\, \sup_{0 \leq t \leq T}\, \mathbb E \left[\left(N^n_t - \Lambda^n_t\right)^2 \right] \leq C_Z
    $$
    from \eqref{eq:bound_exp_Z}, which shows that $\left(N^n_t - \Lambda^n_t\right)_{t \leq T, n \geq 1}$ is uniformly integrable. For each $n \geq 1\,$, $N^n - \Lambda^n$ is a martingale with respect to the filtration $\left(\mathcal F^n\right)_{t \leq T}$ generated by $N^n$ (see \eqref{eq:def_filtration}), which is also the filtration generated by $\left(\Lambda^n\,, N^n - \Lambda^n\right)$. Thus, applying \citet[Theorem~5.3]{whitt2007proofs} shows that $N- \Lambda$ is a martingale with respect to the filtration generated by $\left(\Lambda\,, N - \Lambda\right)\,$. This filtration is identical to the one generated by $\left(\Lambda\,, N\right)\,$, which is also the one generated by $N$ alone since we have shown that $\Lambda = G_0 + K * N$ almost surely. This concludes the proof.
\end{proof}

\subsection{Additional results}
We regroup here some technical results that are used in the previous subsections.

The following lemma is useful for proving the tightness of our processes.
\begin{lemma} \label{lemma:translation_continuous_L1}
Let $K \in L^1\left([0\,,T]\,, \mathbb R\right)\,$, we have 
$$
\lim_{\delta \to 0^+}\, \sup_{0 \leq \eta \leq \delta}\, \int_0^{T-\eta}\, \left|K(s + \eta) - K(s)\right|\,ds = 0\,.
$$
\end{lemma}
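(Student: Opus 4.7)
The plan is to reduce this to the standard $L^1$-continuity of translation. First I would extend $K$ to all of $\mathbb{R}$ by setting $\tilde K(s) := K(s)\mathbbm{1}_{[0,T]}(s)$, so that $\tilde K \in L^1(\mathbb{R})$ with $\|\tilde K\|_{L^1(\mathbb{R})} = \|K\|_{L^1([0,T])}$. For any $\eta \in [0,T]$, the truncated integral we want to bound satisfies
$$
\int_0^{T-\eta} |K(s+\eta) - K(s)|\,ds \leq \int_{\mathbb{R}} |\tilde K(s+\eta) - \tilde K(s)|\,ds,
$$
so it suffices to prove that $\|\tilde K(\cdot + \eta) - \tilde K\|_{L^1(\mathbb{R})} \to 0$ uniformly for $\eta \in [0,\delta]$ as $\delta \to 0^+$.

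Next I would use the density of $C_c(\mathbb{R})$ in $L^1(\mathbb{R})$: given $\varepsilon > 0$, pick $\phi \in C_c(\mathbb{R})$ with $\|\tilde K - \phi\|_{L^1(\mathbb{R})} < \varepsilon / 3$. Since $\phi$ has compact support, it is uniformly continuous, so there exists $\delta_0 > 0$ such that whenever $0 \leq \eta \leq \delta_0$, the function $\phi(\cdot + \eta) - \phi$ is uniformly small and supported in a common compact set, giving $\|\phi(\cdot + \eta) - \phi\|_{L^1(\mathbb{R})} < \varepsilon / 3$. By translation invariance of Lebesgue measure, $\|\tilde K(\cdot + \eta) - \phi(\cdot + \eta)\|_{L^1(\mathbb{R})} = \|\tilde K - \phi\|_{L^1(\mathbb{R})} < \varepsilon/3$.

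Applying the triangle inequality,
$$
\|\tilde K(\cdot + \eta) - \tilde K\|_{L^1(\mathbb{R})} \leq \|\tilde K(\cdot + \eta) - \phi(\cdot + \eta)\|_{L^1(\mathbb{R})} + \|\phi(\cdot + \eta) - \phi\|_{L^1(\mathbb{R})} + \|\phi - \tilde K\|_{L^1(\mathbb{R})} < \varepsilon,
$$
and crucially this bound holds for every $\eta \in [0,\delta_0]$ simultaneously, so the supremum in the statement is bounded by $\varepsilon$ for $\delta \leq \delta_0$. Since $\varepsilon > 0$ was arbitrary, this yields the claimed limit. There is no real obstacle here; the only subtlety is making sure the uniformity in $\eta$ is preserved, which it is because both the approximation bound (via translation invariance) and the uniform continuity bound for $\phi$ are themselves uniform over $\eta \in [0,\delta_0]$.
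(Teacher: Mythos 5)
Your proof is correct and follows essentially the same route as the paper: approximate $K$ by a continuous function using density in $L^1$, exploit its uniform continuity, and conclude with the triangle inequality, which gives the required uniformity in $\eta$. The only cosmetic difference is that you pass through the zero-extension $\tilde K$ on $\mathbb{R}$ and the classical $L^1$-translation-continuity statement, whereas the paper works directly on $[0,T]$ with a continuous approximant $f$ and the bound $2\,\varepsilon + T\,w(f,\delta)$.
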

\begin{proof}
    Let $\varepsilon > 0\,.$ By density of continuous functions in $L^1\left([0\,,T]\,, \mathbb R\right)\,$, there is $f : [0\,,T] \to \mathbb R$ continuous such that $\|K - f\|_{L^1} \leq \varepsilon\,$. We can thus bound
    \begin{align*}
        \int_0^{T- \eta}\, \left|K(s + \eta) - K(s)\right|\, ds &\leq 2\,\|K-f\|_{L^1} + \int_0^{T-\eta}\, \left|f(s + \eta) - f(s)\right|\,ds \\
        &\leq 2\,\varepsilon + T\,w(f, \delta)\,, \quad \eta \leq \delta\,,
    \end{align*}
    which gives, by continuity of $f\,$,
    $$
    \lim_{\delta \to 0^+}\, \sup_{0 \leq \eta \leq \delta}\, \int_0^{T-\eta}\, \left|K(s + \eta) - K(s) \right|\, ds \leq 2 \, \varepsilon
    $$
    and concludes the proof.
\end{proof}

We also need to ensure some the stability of our Volterra equations in a deterministic setting.

\begin{lemma}
\label{lemma:stability_equation}
Let $\left(f_n\right)_{n \geq 1}$ be a sequence of càdlàg functions on $[0\,,T]$ converging to $f\,$, càdlàg with bounded variation, in $L^1([0\,,T])\,$. We define
$$
F_n(t) := \int_{[0,t]}\,f_n(t-s)\,K^n(ds)\,, \quad t \leq T\,, \quad n \geq 1\,,
$$
where the measures $\left(K^n\right)_{n \geq 1}$ are defined in Lemma \ref{lemma:Volterra}. If $F_n$ converges to $F$ continuous, in $L^1([0\,,T])\,$, then 
$$
F(t) = \int_0^t\,K(t-s)\,f(s)\,ds\,, \quad t \leq T\,.
$$
\end{lemma}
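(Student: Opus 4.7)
The plan is to characterize $F$ by testing against continuous functions and exploiting the discrete measure structure of $K^n$. Fix an arbitrary continuous $\phi : [0,T] \to \mathbb R$ and consider the pairing
$$
\int_0^T \phi(t)\, F_n(t)\, dt = \int_0^T \phi(t) \int_{[0,t]} f_n(t-s)\, K^n(ds)\, dt.
$$
Since $f_n$ is càdlàg, hence bounded on $[0,T]$, and $K^n$ is a finite positive measure, Fubini applies and, after the change of variable $u = t-s$, the pairing rewrites as $\int_{[0,T]} g_n(s)\, K^n(ds)$, where $g_n(s) := \int_0^{T-s} \phi(u+s)\, f_n(u)\, du$. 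This reformulation isolates an $s$-integral against $K^n$, whose discrete structure can then be treated as a Riemann sum approximating $K(s)\, ds$.

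The key observation is that $g_n \to g$ uniformly on $[0,T]$, with $g(s) := \int_0^{T-s} \phi(u+s)\, f(u)\, du$. Indeed, $\|g_n - g\|_\infty \leq \|\phi\|_\infty \|f_n - f\|_{L^1}$, which vanishes by hypothesis, and $g$ is continuous on $[0,T]$ thanks to the uniform continuity of $\phi$ and the boundedness of $f$ (a càdlàg function on $[0,T]$ is bounded). Splitting
$$
\int_{[0,T]} g_n(s)\, K^n(ds) = \int_{[0,T]} (g_n - g)(s)\, K^n(ds) + \int_{[0,T]} g(s)\, K^n(ds),
$$
the first term is bounded by $\|g_n - g\|_\infty \int_0^T K(s)\, ds \to 0$, while the second reads $\sum_{i=0}^{n-1} g(t_i^n) \int_{t_i^n}^{t_{i+1}^n} K(s)\, ds$ and, by the uniform continuity of $g$ on $[0,T]$, converges to $\int_0^T g(s) K(s)\, ds$. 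A second application of Fubini identifies this limit with $\int_0^T \phi(t)\, (K*f)(t)\, dt$, so altogether
$$
\int_0^T \phi(t)\, F_n(t)\, dt \;\longrightarrow\; \int_0^T \phi(t)\, (K*f)(t)\, dt.
$$

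To conclude, the assumption $F_n \to F$ in $L^1([0,T])$ gives $\int_0^T \phi F_n\, dt \to \int_0^T \phi F\, dt$ for every continuous $\phi$; hence $F = K*f$ almost everywhere by density of continuous functions in $L^1([0,T])$. Since $F$ is continuous by assumption, to upgrade to an everywhere equality it suffices to check that $K*f$ is continuous. For $0 \leq t_1 < t_2 \leq T$, I decompose
$$
(K*f)(t_2) - (K*f)(t_1) = \int_0^{t_1}\!\left[K(u+(t_2-t_1)) - K(u)\right] f(t_1-u)\, du + \int_0^{t_2 - t_1}\! K(u)\, f(t_2 - u)\, du,
$$
control the second piece by $\|f\|_\infty \int_0^{t_2 - t_1} K(u)\, du$, and bound the first by $\|f\|_\infty$ times the $L^1$-translation modulus of $K$, which vanishes as $t_2 - t_1 \to 0$ by Lemma~\ref{lemma:translation_continuous_L1}. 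I anticipate the main technical care will be in the bookkeeping of three different modes of convergence (uniform for $g_n \to g$, Riemann-sum type for $\int g\, dK^n$, and $L^1$ for $F_n \to F$), but no substantial obstacle beyond what is already handled by the preceding lemmas.
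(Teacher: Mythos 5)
Your proof is correct, and it takes a genuinely different route from the paper's. The paper estimates $\int_0^T\bigl|F(t)-(K*f)(t)\bigr|\,dt$ directly, splitting it into three terms; the delicate term, comparing $\int_{[0,t]}f(t-s)\,K^n(ds)$ with $\int_0^t f(t-s)K(s)\,ds$, is handled by a Stieltjes integration by parts, writing $\int_{[0,t]}f(t-s)\,K^n(ds)=f(0)\,K^n([0,t])+\int_0^t K^n([0,t-u])\,df(u)$, which uses the bounded-variation hypothesis on $f$ together with the pointwise convergence $K^n([0,t])\to\int_0^t K(s)\,ds$ and two applications of dominated convergence. You instead test against an arbitrary continuous $\phi$, use Fubini and the substitution $u=t-s$ to isolate the integral $\int_{[0,T]}g_n\,dK^n$, and exploit the uniform convergence $g_n\to g$ (controlled by $\|\phi\|_\infty\|f_n-f\|_{L^1}$) together with the Riemann-sum structure of $K^n$ acting on the continuous function $g$; this identifies $F=K*f$ almost everywhere, and you then upgrade to everywhere equality by proving that $K*f$ is continuous via the translation modulus of $K$ (Lemma~\ref{lemma:translation_continuous_L1}). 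Two remarks on the comparison: your argument never uses the bounded-variation assumption on $f$ (boundedness of the càdlàg $f$ suffices), so it is in this respect more general and avoids Stieltjes calculus altogether; and you make explicit the continuity of $K*f$, a point the paper compresses into the phrase ``by continuity.'' The price is an extra duality step — note that passing from $\int_0^T\phi\,(F-K*f)\,dt=0$ for all continuous $\phi$ to $F=K*f$ a.e.\ is cleanest here because both $F$ and $K*f$ are bounded on $[0,T]$, which makes the approximation of $\operatorname{sgn}(F-K*f)$ by continuous functions in $L^1$ conclusive; it would be worth stating that one line. Otherwise the bookkeeping you anticipated is exactly what is needed and all estimates check out.
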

\begin{proof}
    For any $n \geq 1\,$, we can write 
    \begin{align*}
    \int_0^T\, \left|F(t) - \int_0^t\,K(t-s)\,f(s)\,ds\right|\,dt &\leq \int_0^T\,|F(t) - F_n(t)|\,dt + \int_0^T\,\left|\int_{[0,t]}\,\left(f_n(t-s) - f(t-s)\right)\,K^n(ds) \right|\,dt \\
    &\quad + \int_0^T\,\left|\int_{[0,t]}\,f(t-s)\,K^n(ds) - \int_0^t\,f(t-s)\,K(s)\,ds\right|\,dt \\
    &=: \mathcal I_1^n + \mathcal I^n_2 + \int_0^T\left|\mathcal I^n(t) - \mathcal I(t) \right|\,dt \,, \quad n \geq 1\,.
    \end{align*}
    By $L^1$-convergence, $\mathcal I^n_1$ goes to $0$ as $n$ goes to infinity. Similarly, 
    $$
    \mathcal I_2^n \leq K^n([0\,,T])\,\int_0^T\,|f_n(t) - f(t)|\,dt = \int_0^T\,K(s)\,ds\, \int_0^T\,|f_n(t) - f(t)|\,dt\,, \quad n \geq 1\,,
    $$
    which vanishes in the limit.

    For the remaining term, we use $K^n([0\,,T]) = \int_0^T\,K(s)\,ds\,$, and for $0 \leq t < T\,$,
    $$
    K^n([0,t]) = \sum_{j = 0}^{\lfloor nt / T\rfloor}\,\int_{t^n_j}^{t^n_{j+1}}\,K(s)\,ds = \int_0^T\, \1_{\left[\frac{T}{n},\left(\lfloor \frac{nt}{T}\rfloor + 1\right)\frac{T}{n}\right]}(s)\,K(s)\,ds \overset{n \to \infty}{\longrightarrow} \int_0^t\,K(s)\,ds\,,
    $$
    by the dominated convergence theorem since $K \in L^1([0\,,T])\,$. We then rewrite $\mathcal I^n(t)$ as
    \begin{align*}
    \mathcal I^n(t) &= \int_{[0,t]}\,f(t-s)\,K^n(ds) \\
    &= f(0)\,K^n([0\,,t]) + \int_{[0,t]}\,\int_0^{t-s}\,df(u)\,K^n(ds) \\
    &= f(0)\,K^n([0\,,t]) + \int_0^t\,K^n([0\,,t-u])\,df(u) \,, \quad n \geq 0\,,
    \end{align*}
    which, thanks to the domination $K^n([0\,,t]) \leq K^n([0\,,T]) = \int_0^T \,K(s)\,ds\,$, converges to 
    $$
    f(0)\,\int_0^t\,K(s)\,ds + \int_0^t\,\int_0^{t-u}\,K(s)\,ds\,df(u) = \int_0^t\,f(t-s)\,K(s)\,ds = \mathcal I(t)\,.
    $$
    Finally, we can apply the dominated convergence theorem with the bound 
    $$
    \left|\mathcal I^n(t) - \mathcal I(t)\right| \leq 2\,\left\|f\right\|_{\infty}\,\int_0^T\,K(s)\,ds\,, \quad t \leq T\,, \quad n \geq 1\,,
    $$
    to show that $\int_0^T\,\left|\mathcal I^n(t) - \mathcal I(t)\right|\,dt$ goes to $0$ as $n \to \infty\,$. We conclude that
    $$
    \int_0^T\,\left|F(t) - \int_0^t\,K(t-s)\,f(s)\,ds \right|\,dt = 0\,,
    $$
    so that, by continuity, 
    $$
    F(t) = \int_0^t \,K(t-s)\,f(s)\,ds\,, \quad t \leq T\,.
    $$
\end{proof}

Finally, the continuity of the following functional is used to prove that the limit of $(N^n)_{n \geq 1}$ is a counting process.

\begin{lemma}
\label{lemma:functionals_continuity}
On $D([0\,,T])\,$, the function 
$$
\mathcal S_{\Delta} : f \mapsto \sup_{0 \leq t \leq T}\, |\Delta f(t)|\,, \quad f \in D([0\,,T])\,,
$$
is continuous for the Skorokhod topology.
\end{lemma}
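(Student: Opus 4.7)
The plan is to exploit the standard time-change characterization of $J_1$-convergence: $f_n \to f$ in $D([0,T])$ if and only if there exist continuous strictly increasing bijections $\lambda_n : [0,T] \to [0,T]$ with $\|\lambda_n - \mathrm{id}\|_\infty \to 0$ and $\|f_n \circ \lambda_n - f\|_\infty \to 0$. The proof then reduces to two short observations: $\mathcal{S}_\Delta$ is invariant under such time changes, and it is Lipschitz with respect to the uniform norm.

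First, I would check that $\mathcal{S}_\Delta(g \circ \lambda) = \mathcal{S}_\Delta(g)$ whenever $\lambda : [0,T] \to [0,T]$ is a continuous, strictly increasing bijection and $g \in D([0,T])$. For $t \in (0,T]$, the continuity and strict monotonicity of $\lambda$ give $\lambda(s) \uparrow \lambda(t)$ with $\lambda(s) < \lambda(t)$ as $s \uparrow t$, hence $(g \circ \lambda)(t-) = \lim_{u \uparrow \lambda(t)} g(u) = g(\lambda(t)-)$, so $\Delta(g \circ \lambda)(t) = \Delta g(\lambda(t))$. Since $\lambda$ is a bijection of $[0,T]$, taking the supremum yields the invariance. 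Applied to $g_n := f_n \circ \lambda_n$ this gives $\mathcal{S}_\Delta(f_n) = \mathcal{S}_\Delta(g_n)$ for every $n$.

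Second, I would show that $|\mathcal{S}_\Delta(g) - \mathcal{S}_\Delta(h)| \leq 2\|g - h\|_\infty$ for all $g,h \in D([0,T])$. Indeed, for any $t$ one has $|\Delta g(t) - \Delta h(t)| \leq |g(t) - h(t)| + |g(t-) - h(t-)|$, and the inequality $|g(s) - h(s)| \leq \|g - h\|_\infty$ for $s < t$ passes to the limit $s \uparrow t$ to yield $|g(t-) - h(t-)| \leq \|g - h\|_\infty$. Taking the supremum gives the Lipschitz bound.

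Combining the two steps, $\mathcal{S}_\Delta(f_n) = \mathcal{S}_\Delta(g_n) \to \mathcal{S}_\Delta(f)$, establishing continuity. The only mildly subtle point is verifying that left limits commute with a continuous strictly increasing reparametrization; everything else is routine, so no genuine obstacle is expected.
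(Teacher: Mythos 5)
Your proof is correct and follows essentially the same route as the paper: the time-change characterization of $J_1$-convergence, a $2$-Lipschitz bound for $\mathcal S_{\Delta}$ in the uniform norm, and invariance of $\mathcal S_{\Delta}$ under continuous increasing bijections of $[0,T]$. The only difference is that you spell out the invariance step (commutation of left limits with the reparametrization), which the paper simply asserts.
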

\begin{proof}
    Let $f_n \to f$ in the Skorokhod topology. According to \citet[Theorem~VI.1.14]{jacod2013limit}, there exists continuous increasing bijections $\left(\lambda_n\right)_{n \geq 0}$ from $[0\,,T]$ onto itself such that 
    $$
    \left\| f_n \circ \lambda_n - f\right\|_{\infty} \to 0\,, \quad \left\| \lambda_n - Id \right\|_{\infty} \to 0\,,
    $$
    as $n$ goes to infinity. Since
    \begin{align*}
        \left|\,|f_n(\lambda_n(t)) - f_n(\lambda_n(s))| - |f(t) - f(s) | \,\right| &\leq |f_n(\lambda_n(t)) - f(t) + f(s) - f_n(\lambda_n(s))| \\
        &\leq 2\,\|f_n \circ \lambda_n - f\|_{\infty}\,, \quad s \leq t \leq T\,, \quad n \geq 0\,,
    \end{align*}
    we deduce that 
    $$
    \left|\mathcal S_{\Delta}(f_n \circ \lambda_n) - \mathcal S_{\Delta}(f) \right| \leq 2\,\left\|f_n \circ \lambda_n - f\right\|_{\infty}\,, \quad n \geq 0\,,
    $$
    showing that $\mathcal S_{\Delta} (f_n \circ \lambda_n) \overset{n \to \infty}{\longrightarrow} \mathcal S_{\Delta}(f)\,$. We conclude by noticing that since for each $n \geq 0\,$ $\lambda_n$ is a continuous bijection from $[0\,,T]$ onto itself, $\mathcal S_{\Delta} (f_n \circ \lambda_n) = \mathcal S_{\Delta}(f_n)\,$.
\end{proof}

\begin{appendix}
\section{Kernels and resolvents}\label{app:kernels}
Consider a kernel $K \in L^1_{\mathrm{loc}}(\mathbb{R}_+, \mathbb{R}_+)$.
There exists a unique kernel $R \in L^1_{\mathrm{loc}}(\mathbb{R}_+, \mathbb{R}_+)$, called the \emph{resolvent of the second kind}, or simply the \emph{resolvent}, such that
\[
R * K = K * R = R - K.
\]
Existence and uniqueness are guaranteed by \citet*[Theorems 2.3.1 and 2.3.5]{Gripenberg1990}.

Resolvents are particularly useful for solving linear Volterra equations. More precisely, the solution to
\[
f = g + K * f
\]
is given by
\[
f = g + R * g.
\]

Below, we provide a table listing some frequently used kernels, along with their integrals $\bar K(t) := \int_0^t\,K(s)\,ds$ and the corresponding resolvents. We use $c \geq 0\,$, $b \in \R\,$, $\alpha > 0\,$, and we denote by $E_{\alpha, \beta}$ the Mittag--Leffler function defined by
$$
E_{\alpha, \beta}(z) := \sum_{n = 0}^{+ \infty}\,\frac{z^n}{\Gamma(\alpha\,n + \beta)}\,, \quad z \in \mathbb C\,, \quad \alpha\,, \beta > 0\,,
$$
and by $\gamma$ the lower incomplete gamma function 
$$
\gamma(\alpha, x) := \dfrac{1}{\Gamma(\alpha)}\int_0^xt^{\alpha - 1}e^{-t}\,dt\,, \quad x \geq 0\,, \quad \alpha > 0\,.
$$

\begin{equation*}
\renewcommand{\arraystretch}{2}
    \begin{array}{|c|c|c|}
        \hline
        K(t) & \bar K(t) & R(t)   \\
        \hline
        c\,e^{-bt} & \frac{c}{b}\,\left(1 - e^{-bt}\right) & c\,e^{-(b-c)t} \\
        \hline
        \dfrac{c\,t^{\alpha - 1}}{\Gamma(\alpha)} & \dfrac{c\,t^{\alpha}}{\Gamma(\alpha+1)} & c\,\,t^{\alpha - 1}\,E_{\alpha,\alpha}(c\,t^{\alpha}) \\
        \hline
        c e^{-bt}\dfrac{t^{\alpha-1}}{\Gamma(\alpha)} & \dfrac{c}{b^\alpha}\gamma(\alpha, bt) & c e^{-bt} t^{\alpha - 1}E_{\alpha, \alpha}(ct^\alpha)\\
        \hline
        c t^{\alpha - 1}E_{\alpha, \alpha}(ct^\alpha) &  c t^{\alpha}E_{\alpha, \alpha + 1}(ct^\alpha) & c  t^{\alpha - 1}E_{\alpha, \alpha}(2ct^\alpha) \\
        \hline
         c e^{-bt} t^{\alpha - 1}E_{\alpha, \alpha}(ct^\alpha) &  \sum_{n = 1}^{+ \infty}\,\left(\frac{c}{b^\alpha}\right)^n\gamma(\alpha n, bt) & c e^{-bt} t^{\alpha - 1}E_{\alpha, \alpha}(2ct^\alpha) \\
        \hline
    \end{array}
\end{equation*}
    
    \section{The Inverse Gaussian distribution}
    \label{App:IG}
    The Inverse Gaussian distribution, is a probability distribution on $\R_+$ given by the following density
    \begin{equation}
        \label{eq:IG_density}
        f_{\mu,\lambda}(x) = \sqrt{\frac{\lambda}{2\pi x^3}}\,\exp\left(- \frac{\lambda\,(x - \mu)^2}{2\mu^2 x} \right)\,,\quad x > 0\,, \quad \mu\,,\lambda > 0\,,
    \end{equation}
    If $X \sim IG(\mu\,,\lambda)\,$, it admits finite moments of any order, that can be computed using the recursive formula
    \begin{equation}
        \label{eq:IG_moments}
        \mathbb E \left[X\right] = \mu\,, \quad \text{and}\quad \mathbb E \left[X^{n}\right] = (2n - 3)\,\frac{\mu^2}{\lambda}\,\mathbb E \left[X^{n-1}\right] + \mu^2\,\mathbb E \left[X^{n-2}\right]\,, \quad n \geq 2 \,. 
    \end{equation}
    Finally, its characteristic function is given by 
    \begin{equation}
        \label{eq:IG_charac}
        \mathbb E \left[\exp\left( w\,X\right) \right] = \exp \left(\frac{\lambda}{\mu}\,\left(1 - \sqrt{1 - 2\,\frac{\mu^2}{\lambda}\,w} \right) \right)\,, \quad w \in \mathbb C\,, \quad \Re(w) \leq 0\,.
    \end{equation}

    Inverse Gaussian random variables can be simulated with one standard normal and one uniform variable, using the acceptance-rejection method from \citet*{michael1976generating}, as follows.
    
    \begin{algorithm}[H]
\caption{Sampling from the Inverse Gaussian Distribution}\label{alg:IG_sampling}
\begin{algorithmic}[1]
\State \textbf{Input:} Parameters \( \mu > 0 \), \( \lambda > 0 \).
\State \textbf{Output:} Sample \( IG \) from the Inverse Gaussian distribution.

\State   
Generate \( \xi \sim \mathcal{N}(0, 1) \) and compute \( Y = \xi^2 \).

\State Compute the candidate value \( X \):  
$$
X = \mu + \frac{\mu^2 Y}{2\lambda} - \frac{\mu}{2\lambda} \sqrt{4\mu\lambda Y + \mu^2 Y^2}.
$$

\State Generate a uniform random variable:  
Sample \( \eta \sim \text{Uniform}(0, 1) \).

\State Select the output:  
\If{$ \eta \leq \frac{\mu}{\mu + X} $}
    \State Set the output \( IG = X \).
\Else
    \State Set \( IG = \frac{\mu^2}{X} \).
\EndIf
\end{algorithmic}
\end{algorithm}
\end{appendix}

\bibliographystyle{plainnat}
\bibliography{main}

\end{document}